\let\mathbb=\varmathbb
\colorlet{MyBlue}{DodgerBlue!75!Black}
\colorlet{MyGreen}{DarkGreen!95!Black}
\numberwithin{equation}{section}  
\crefname{example}{Ex.}{Exs.}
\newcommand{\eps}{\varepsilon}
\DeclareMathOperator*{\argmin}{argmin}
\DeclareMathOperator{\bd}{bd}
\DeclareMathOperator{\cl}{cl}
\DeclareMathOperator{\zer}{zer}
\DeclareMathOperator{\dist}{dist}
\DeclareMathOperator{\dif}{d\!}
\DeclareMathOperator{\dom}{dom}
\DeclareMathOperator{\gr}{gr}
\DeclareMathOperator{\Int}{int}
\DeclareMathOperator{\range}{Range}
\DeclareMathOperator{\Avg}{Avg}
\DeclareMathOperator{\Id}{Id}
\DeclareMathOperator{\indicator}{\iota}
\newcommand{\ca}{\mathtt{a}}
\newcommand{\cb}{\mathtt{b}}
\newcommand{\ce}{\mathtt{e}}
\newcommand{\B}{\mathbb{B}}
\newcommand{\bx}{\mathbf{x}}
\newcommand{\bS}{\mathbf{S}}
\renewcommand{\iff}{\Leftrightarrow}
\renewcommand{\emptyset}{\varnothing}
\newcommand{\eqdef}{\triangleq}
\newcommand{\scrZ}{\mathcal{Z}}
\newcommand{\scrB}{\mathcal{B}}
\newcommand{\scrC}{\mathcal{C}}
\newcommand{\scrD}{\mathcal{D}}
\newcommand{\scrE}{\mathcal{E}}
\newcommand{\scrF}{\mathcal{F}}
\newcommand{\scrG}{\mathcal{G}}
\newcommand{\scrH}{\mathcal{H}}
\newcommand{\scrS}{\mathcal{S}}
\newcommand{\setC}{\mathscr{C}}
\newcommand{\setS}{\mathscr{S}}
\renewcommand{\Pr}{\mathbb{P}}
\newcommand{\Ex}{\mathbb{E}}
\newcommand{\1}{\mathbf{1}}
\newcommand{\R}{\mathbb{R}}
\newcommand{\N}{\mathbb{N}}
\newcommand{\K}{\mathbb{K}}
\DeclareMathOperator{\NC}{\mathrm{N}}
\newcommand{\bC}{{\mathbf{C}}}
\newcommand{\BV}{\mathbf{BV}}
\newcommand{\Lim}{\mathsf{Lim}}
\newcommand{\ball}{\mathbb{B}}
\newcommand{\opA}{\mathsf{A}}
\newcommand{\opM}{\mathsf{M}}
\DeclareMathOperator{\gap}{\Theta}
\DeclareMathOperator{\support}{\mathsf{s}}
\theoremstyle{plain}
\newtheorem{theorem}{Theorem}
\newtheorem{corollary}[theorem]{Corollary}
\newtheorem*{corollary*}{Corollary}
\newtheorem{lemma}[theorem]{Lemma}
\newtheorem{proposition}[theorem]{Proposition}
\theoremstyle{definition}
\newtheorem{definition}[theorem]{Definition}
\newtheorem*{definition*}{Definition}
\newtheorem*{problem*}{Problem}
\newtheorem{assumption}{Assumption}
\theoremstyle{remark}
\newtheorem{remark}{Remark}
\newtheorem*{remark*}{Remark}
\newtheorem*{notation*}{Notational remark}
\numberwithin{theorem}{section}
\numberwithin{remark}{section}
\numberwithin{example}{section}
\DeclarePairedDelimiter{\inner}{\langle}{\rangle}
\title{Asymptotic behaviour of coupled random dynamical systems with multiscale aspects}
\date{\today}
\author[3]{\small D. Russell Luke} 
\author[1]{\small Johannes-Carl Schnebel}
\author[1]{\small Mathias Staudigl}
\author[2]{\small Juan Peypouquet} 
\author[1]{\small Siqi Qu}
\affil[1]{\footnotesize Mannheim University, Department of Mathematics, B6 26, 68159 Mannheim, Germany\\
(\href{mailto:mathias.staudigl@uni-mannheim.de}{qu.siqi@uni-mannheim.de,j.schnebel@uni-mannheim.de,mathias.staudigl@uni-mannheim.de})}
\affil[2]{\footnotesize Rijksuniversiteit Groningen, Faculty of Science and Engineering, Systems, Control and Optimization — Bernoulli Institute, Groningen, The Netherlands\\
(\href{mailto:j.g.peypouquet@rug.nl}{j.g.peypouquet@rug.nl})}
\affil[3]{\footnotesize Institut für Numerische und Angewandte Mathematik, Universität Göttingen, 37083 Göttingen, Germany \\
(\href{mailto: r.luke@math.uni-goettingen.de}{r.luke@math.uni-goettingen.de})}
\begin{document}

\maketitle

\begin{abstract}
We examine a class of stochastic differential inclusions involving multiscale effects designed to solve a class of generalized variational inequalities. This class of problems contains constrained convex non-smooth optimization problems, constrained saddle-point problems and various equilibrium problems in economics and engineering. In order to respect constraints we adopt a penalty approach, introducing an explicit time-dependency into the evolution system. The resulting dynamics are described in terms of a non-autonomous stochastic evolution equation governed by maximally monotone operators in the drift and perturbed by a Brownian motion. We study the asymptotic behavior, as well as finite time convergence rates in terms of gap functions. The condition we use to prove convergence involves a Legendre transform of the function describing the set $C$, a condition first used by Attouch and Czarnecki (J. Differ. Equations, Vol. 248, Issue 6, 2010) in the context of deterministic evolution equations. We also establish a large deviations principle showing that individual trajectories exhibit exponential concentration around the solution set. Finally we show how our continuous-time approach relates to penalty-regulated algorithms of forward-backward type after performing a suitable Euler-Maruyama discretisation.
\end{abstract}

\section{Introduction}
\label{sec:intro}
%

In this paper we are concerned with the study of stochastic splitting methods for solving generalized variational inequalities of the form 
\begin{equation}\label{eq:MI}\tag{MI}
\text{Find $\bar{x}\in C$ such that }0\in(\opA+\NC_{C})(\bar{x}),
\end{equation}
where the problem data satisfy the following standing assumptions:
\begin{assumption}\label{ass:OpMonotone}
\begin{itemize}
\item $\opA:\R^{d}\to 2^{\R^{d}}$ is a maximally monotone operator and $C\subset\R^{d}$ is a non-empty closed convex set. $\NC_{C}$ is the outward normal cone mapping to $C$.
\item $\dom(\opA)=\{x\in\R^{d}\vert\opA(x)\neq\emptyset\}$ has a nonempty interior: $\Int(\dom\opA)\neq\emptyset$.
\item $\opA+\NC_{\setC}$ is maximally monotone.
\item $\setS=\{x\in\R^{d}\vert 0\in \opA(x)+\NC_{C}(x) \}\neq\emptyset$.
\end{itemize}
\end{assumption}
Maximal monotonicity means that \eqref{eq:MI} is a monotone inclusion problem which requires finding $\bar{x}\in C$ and $\bar{x}^{*}\in\opA(\bar{x})$ such that $\inner{\bar{x}^{*},x-\bar{x}}\geq 0$ for all $x\in C$.\\
When $\opA=\partial\Phi$ for $\Phi:\R^{d}\to(-\infty,\infty]$, a proper  and lower semi-continuous convex function, then the problem \eqref{eq:MI} reduces to the optimality conditions for the constrained convex minimization problem
\begin{equation}\label{eq:Opt}\tag{Opt}
\Phi_{\rm Opt}(C)=\min\{\Phi(x)\vert x\in C\}.
\end{equation}
If $\opA=F$, where $F:\R^{d}\to\R^{d}$ is a single-valued Lipschitz continuous and monotone mapping, the monotone inclusion problem \eqref{eq:MI} reduces to the variational inequality 
\begin{equation}\label{eq:VI}\tag{VI} 
\text{Find $\bar{x}\in C$ such that }\inner{F(\bar{x}),x-\bar{x}}\geq 0\qquad\forall x\in C.
\end{equation}
This general equilibrium framework includes various game theoretic equilibrium concepts, as well as min-max optimization problems. 

The question how to respect state constraints in a dynamical system is always an important and difficult question in scientific computing. While direct projection methods onto the feasible set $C$ would be possible with noisy oracles (leading to the projected stochastic subgradient method \cite{beck2017first}), computing a projection is very often a computationally challenging task. Penalty methods are a natural tool to steer iterative methods towards the satisfaction of the set constraint $C$. We thus assume that the set $C$ admits a representation in terms of the level set of a continuously differentiable function $\Psi:\R^{d}\to\R$ satisfying 
\begin{assumption}\label{ass:Psi}
 $\Psi:\R^{d}\to\R$ is a convex and continuously differentiable function, whose gradient mapping $x\mapsto \nabla\Psi$ is Lipschitz continuous. Moreover, $C=\Psi^{-1}(0)=\argmin\Psi\neq\emptyset$.
\end{assumption}
Our approach in this paper is guided by the systematic study of dissipative dynamical systems which have been developed over the last decades in order to solve problem \eqref{eq:MI}. Indeed, the early references \cite{attouch2010asymptotic,attouch2018asymptotic} studied the asymptotic behavior of the differential inclusion 
\begin{equation}\label{eq:DI}\tag{DI}
\dif \bx(t)+\opA(\bx(t))\dif t+\beta(t)\nabla \Psi(\bx(t))\dif t \ni 0 
\end{equation}
or, as a special case of the above with $\opA=\partial\Phi$ for a convex proper and lower semi-continuous function $\Phi:\R^{d}\to[0,\infty]$, 
\begin{equation}\label{eq:MAG}\tag{MAG}
\dif \bx(t)+\partial \Phi(\bx(t))\dif t+\beta(t)\nabla \Psi(\bx(t))\dif t \ni 0 
\end{equation}
Time dependency is included in these dynamical systems via an absolutely continuous function $t\mapsto\beta(t)$, modeled as a positive and non-decreasing penalty parameter. To understand its role, in the setting of \eqref{eq:MAG} observe that $\partial\Phi+\beta(t)\nabla\Psi\subset\partial(\Phi+\beta(t)\Psi),$ so that each trajectory of \eqref{eq:MAG} satisfies 
$$
0\in \dif \bx(t)+\partial(\Phi+\beta(t)\Psi)(\bx(t)).
$$
On the other hand, $\Phi+\beta(t)\Psi\uparrow\Phi+\indicator_{C}$ as $t\to\infty$. Since monotone convergence is a variational convergence \cite{attouch1984variational}, we have also $\partial(\Phi+\beta(t)\Psi)\to\partial(\Psi+\indicator_{C})$, so that the dynamical system in the previous display reduces to the subgradient flow 
$$
0\in\dif\bx(t)+\partial(\Phi+\indicator_{C})(\bx(t)).
$$
It has been shown in the above works that - under certain conditions on the function $\beta(\cdot)$ - solutions of these dynamical systems asymptotically approach equilibria to \eqref{eq:MI}, respectively \eqref{eq:Opt}. 

In many cases the evaluation of the involved operators is subject to noise, either due to inexact evaluations, or because relevant problem data are only available in terms of a stochastic oracle. Motivated by data-driven approaches to control and optimization, we work in settings in which information about the variational inequality cannot be evaluated directly, but rather only subject to random exogenous noise. In such scenarios, it has frequently been argued in machine learning that these errors can be modeled in terms of a stochastic integral \cite{li2019stochastic,an2020stochastic}. This is the starting point for our work. Following continuous-time approaches \cite{MerStauSIAM,MerStaJOTA18,raginsky2012continuous,rodrigo2024stochastic,Maulen-Soto:2025aa}, we assume that we can model the associated errors in terms of a stochastic integral leading to stochastic differential inclusions (SDI) governed by maximally monotone operators of the form
\begin{equation}\label{eq:SDIMMO}
\left\{
\begin{split}
&\dif X(t)+\opA(X(t))\dif t+\beta(t)\nabla\Psi(X(t))\dif t\ni \sigma(t,X(t))\dif W(t)\\
 &X(0)=x\in\cl(\dom(\opA)), 
 \end{split}\right.
\end{equation}
a stochastic process on a stochastic basis $(\Omega,(\scrF_{t})_{t\geq 0},\scrF,\Pr)$, carrying a $m$-dimensional Brownian motion $\{(W(t),\scrF_{t});t\geq 0\}$. The diffusion (volatility) term $\sigma(t,x):[t_{0},\infty)\times\R^{d}\to \R^{d\times m}$ is a matrix-valued measurable function. For a discussion on existence and uniqueness of solution, see Appendix \ref{app:existence}.

As a special case of this dynamical approach, we derive results for the constrained convex minimization problem, defined by the maximally monotone operator $\opA=\partial\Phi$, in terms of the set-valued stochastic dynamical system
\begin{equation}\label{eq:SDI}
\left\{
\begin{split}
&\dif X(t)+\partial \Phi(X(t))\dif t +\beta(t)\nabla\Psi(X(t))\dif t\ni \sigma(t,X(t))\dif W(t), \\ 
&X(0)=x\in\cl(\dom(\partial \Phi)).
\end{split}\right.
\end{equation}

The set-valued formulation aims for the design of a continuous stochastic process $X$, whose sample paths are confined to stay in $\dom(\opA)$. Our analysis can be seen as a direct stochastic extension of the study of non-autonomous evolution equations from the lens of convex and variational analysis, inspired by \cite{attouch2010asymptotic}. The multiscale character of our dynamical system is induced by the increasing schedule for the penalty parameter $\beta(t)$, so that the trajectories are forced over time to enter the feasible set $C$. Within this formulation, we prove new results on the asymptotic and finite-time properties of sample paths induced by the random dynamical systems \eqref{eq:SDIMMO} and \eqref{eq:SDI}, respectively. Specifically, the main results of this paper are the following: 
\begin{itemize}
\item In the setting of the SDI \eqref{eq:SDIMMO}, we prove $L^{p}$ estimates on the trajectories in terms of the geometry of the  exterior penalty function $\Psi$, which has been already systematically exploited in the study of deterministic evolution equations in \cite{attouch2018asymptotic} and \cite{Bot:2016aa}. The algorithmic implications of this geometric setting were first studied in \cite{attouch2011prox} (see also \cite{AttCzarPey11,Peypouquet:2012aa,noun2013forward,Bot:2014aa,czarnecki2016splitting}), although they can be traced back to the early work of Cabot \cite{cabot2005proximal}, whose condition involves both functions $\Phi$ and $\Psi$, in a hierarchical optimization setting. 
\item We establish convergence rates to the solution set $\scrS$ of the variational inequality \eqref{eq:MI} in terms of gap functions, and prove almost sure convergence of the ergodic average to a random variable taking values in $\scrS$, under a weak sharpness condition on $\scrS$. 
\item Specializing to the potential case $\opA=\partial\Phi$, we establish asymptotic convergence and finite time rates in terms of the objective function gap. Note that in this case, the dynamical system \eqref{eq:SDI} can be seen as a stochastic method for solving the hierarchical minimization problem 
\begin{equation}\label{eq:SBP}
\min\{\Phi(x)\vert x\in\argmin_{u\in\R^{d}}\Psi(u)\}
\end{equation}
This is also known as the simple bilevel optimization problem, a challenging class of non-convex optimization problems which received a lot of attention over the last 10 years \cite{sabach2017first,Dempe:2021aa}.
\item Building on \cite{Bernardin:2003aa}, we relate the continuous-time dynamical system to a discrete-time algorithm of forward-backward type, for which we prove new complexity results. As a corollary of our investigations, we derive the first complexity results for a stochastic forward-backward type algorithm devised to solve hierarchical minimization problems of the form \eqref{eq:SBP}.
\end{itemize}
 
\subsection{Related literature} 
Our approach draws inspiration from variational analysis and probability theory. The long-run properties of such dynamics have been studied intensively in convex optimization \cite{attouch2011prox,AttCzarPey11,Peypouquet:2012aa,noun2013forward,czarnecki2016splitting} in a noise-free setting. Only recently, the stochastic analysis of evolution systems of the above kind has been started. In the smooth case, \cite{raginsky2012continuous,MerStaJOTA18,MerStauSIAM} studied projection-based SDE models for convex optimization in the mirror descent framework. Recently, \cite{rodrigo2024stochastic} studied proximal gradient dynamics subject to a Brownian motion. \cite{Bot:2025aa} studies random dynamical systems for operator equations. Finally, \cite{maulen2024tikhonov} develops stochastic differential inclusions with Tikhonov regularization. They use a quite restrictive definition of a solution, requiring that the Yosida approximation of the maximally monotone operator obey a linear growth condition. This requires that the set-valued operator $\opA$ has full domain, a rather restrictive assumption for multivalued stochastic differential inclusions (see \cite{pettersson1995yosida}). We treat a more general class of solutions, allowing for discontinuous repulsive effects at the boundary of $\dom\opA$, at the price of requiring the the interior of $\dom\opA$ is nonempty. This is a fairly standard assumption in the literature on stochastic differential inclusions governed by maximally monotone operators \cite{pardoux2014stochastic,barbu2020optimal,BarbuPrato08,barbu2005neumann}. Indeed, stochastic differential inclusions governed by maximally monotone operators have been systematically investigated over the last decades. Main results on existence and uniqueness of solutions have been reported in finite dimensions in \cite{cepa2006equations} and in infinite dimensions in \cite{Rascanu96}. Solutions to this system comprise a very large class of probabilistic problems, including reflected diffusions, the generalized Skorokhod problem \cite{MENALDI:1983aa,pettersson1995yosida} and the analysis of non-linear Fokker-Planck equations \cite{Ciotir:2022aa,barbu2020optimal}. The textbook \cite{pardoux2014stochastic} gives a good introduction to this field. Our main motivation for studying set-valued dynamical systems governed by maximally monotone operators is to solve monotone inclusions of the form \eqref{eq:MI}. 


\paragraph{Organization of the paper}
We provide a complete analysis of the strong solutions of the system \eqref{eq:SDIMMO}, in Section \ref{sec:General}. Besides asymptotic convergence towards solutions of the constrained variational inequality \eqref{eq:MI}, we develop a rate estimate of the time averaged trajectory in terms of a suitably defined gap function. Section \ref{sec:subgradient} gives refined estimates on the trajectory for the differential inclusion \eqref{eq:SDI}. Section \ref{sec:discrete} established interesting connections between the continuous-time system \eqref{eq:SDIMMO} and a discrete-time stochastic forward-backward splitting method, first studied in \cite{attouch2011prox,AttCzarPey11} in the deterministic setting. Section \ref{sec:numerics} reports numerical experiments obtained from our numerical scheme. Section \ref{sec:conclusion} concludes the paper with further connections to the existing literature and promising future directions.

\section{Notation and preliminaries}
\label{sec:prelims}
%

\paragraph{Elements of variational analysis}
For a matrix $A\in\R^{d\times k}$ we define its Frobenius norm as $\tr(A^{\top}A)=\norm{A}^{2}_{\rm F}$. Let $\Gamma_{0}(\scrH)$ denote the set of all proper lower semi-continuous convex functions on a Hilbert space $\scrH$. The norm and inner product in $\scrH$ are denoted by $\norm{\cdot}$ and $\inner{\cdot,\cdot}$, respectively. Given $f\in\Gamma_{0}(\scrH)$ and $x\in\scrH$, the subdifferential of $f$ at $x$ is the set $\partial f(x)=\{x^{*}\in\scrH\vert f(y)\geq f(x)+\inner{x^{*},y-x}\;\forall y\in\scrH\}$. The Fenchel conjugate of $f\in\Gamma_{0}(\scrH)$ is defined as 
$f^{*}(x^{*})=\sup_{x\in\scrH}\{\inner{x^{*},x}-f(x)\}$. Given a nonempty closed convex set $C\subset\scrH$, its indicator function is defined as $\indicator_{C}(x)=0$ if $x\in C$ and $+\infty$ otherwise. The support function of $C$ is $\support_{C}(x^{*})=\sup_{u\in C}\inner{x^{*},u}$. The normal cone to $C$ is 
$$
\NC_{C}(x)=\left\{\begin{array}{ll} \emptyset  & \text{ if }x\notin C,\\ 
\{p\in\R^{d}\vert \inner{y-x,p}\leq 0 \quad\forall y\in C\} & \text{else.}
\end{array}\right.
$$
It is clear that $\partial\indicator_{C}(x)=\NC_{C}(x)$. 

A monotone operator is a set-valued mapping $\opM:\scrH\to 2^{\scrH}$ such that 
\[
\inner{x^{*}-y^{*},x-y}\geq 0\text{ whenever }(x,x^{*})\in\gr(\opM),(y,y^{*})\in\gr(\opM), 
\]
where $\gr(\opM)=\{(x,x^{*})\vert x^{*}\in\opM\}$ is the graph of $\opM$. If $\opM=\opA+\NC_{C}$ is maximal monotone and $\scrS=\opM^{-1}(0)$, then $z\in\scrS$ if and only if
\begin{equation}\label{eq:eqi}
(\forall (u,w)\in\gr(\opA+\NC_{C})): \; \inner{w,u-z}\geq 0.
\end{equation}

\paragraph{Elements of probability theory}
Let $(\Omega,\scrF,\Pr)$ a probability space and $\{\scrF_{t};t\geq 0\}$ is a filtration. A $\scrH$-valued stochastic process starting at $t_{0}\geq 0$ is a function $X:\Omega\times[t_{0},\infty)\to\scrH$. If $X(\omega,\bullet)\in\bC([t_{0},\infty);\scrH)$, then it is called continuous. It is called progressively measurable if for every $t\geq t_{0}$, the restriction $X\vert_{\Omega\times[t_{0},t]}$ is $\scrF_{t}\otimes\scrB([t_{0},t])$-measurable. The expectation on a measurable set $A\subset\Omega$ is defined as $\Ex[X;A]=\int_{A}X\dif\Pr(\omega)=\Ex(X\mathds{1}_{A})$. If $A=\Omega$, then we obtain the expected value of the random variable $X$.

We define the quotient space $\bS^{0}_{d}[t_{0},T]$ as the space of equivalence classes of progressively measurable processes $X:\Omega\times[t_{0},T]\to\R^{d}$. We set $\bS_{d}^{0}[t_{0}]\eqdef \bigcap_{T\geq t_{0}}\bS^{0}_{\R^{d}}[t_{0},T]$. For $p>0$, we define $\bS^{p}_{d}[t_{0},T]$ the space of equivalence classes of progressively measurable processes $X:\Omega\times[t_{0},T]\to\R^{d}$ such that 
$\Ex\left(\sup_{t\in[t_{0},T]}\norm{X(t)}^{p}\right)<\infty.$


\section{Stochastic evolution equations governed by maximally monotone operators}
\label{sec:General}
%
\subsection{Strong solutions to \eqref{eq:SDIMMO}}
We first make precise what we mean by a solution to the random set-valued dynamical system \eqref{eq:SDIMMO}. We are given a stochastic basis $(\Omega,\scrF,(\scrF_{t})_{t},\Pr)$, carrying a $m$-dimensional Brownian motion $W=\{(W(t),\scrF_{t}),0\leq t<\infty\}$. 
\begin{definition}\label{def:SDIOperator}
Let $\xi\in L^{2}(\Omega;\cl(\dom(\opA)))$ be a given random variable. A pair of continuous $\R^{d}$-valued processes $X=\{(X(t),K(t)),t\in [0,T]\}$ is called a \emph{strong solution} to \eqref{eq:SDIMMO} if the following conditions hold. 
\begin{enumerate}
\item[(SE.1)] $(X(t))_{t\in[0,T]}$ is a $\scrF_{t}$-adapted with continuous sample paths such that $X(t)\in\cl(\dom(\opA))$ $\Pr$-a.s. for all $t\in[0,T]$; 
\item[(SE.2)] $X(0)=\xi\quad\Pr$-a.s.
\item[(SE.3)] $(K(t))_{t\in[0,T]}$ is a $\R^{d}$-valued process, $\scrF_{t}$-adapted whose sample paths are continuous and of bounded variation on $[0,T]$ such that 
$$
\dif X(t)=-\beta(t)\nabla\Psi(X(t))\dif t+\sigma(t,X(t))\dif W(t)-\dif K(t)
$$
holds a.e. $t\in[0,T]$ and $\Pr$-a.s. Here $\dif K(t)$ is interpreted as the Lebesgue-Stieltjes measure generated by $K$.
\item[(SE.4)] For all $\R^{d}$-valued processes $(\zeta,\zeta^{\ast})$ with $\zeta=\{(\zeta(t),\scrF_{t});t\geq 0\}$ and $\zeta^{\ast}=\{(\zeta^{\ast}(t),\scrF_{t});t\geq 0\}$ continuous, adapted on $(\Omega,\scrF,\Pr)$, verifying $(\zeta(t),\zeta^{\ast}(t))\in\gr(\opA)$ for all $t\in[0,T]$, the measure 
$$
\inner{X(t)-\zeta(t),\dif K(t)-\zeta^{\ast}(t)\dif t}
$$
is $\Pr$-a-s. non-negative on $[0,T]$. 
\end{enumerate}
\end{definition}
\begin{remark} 
This last condition justifies the suggestive notation $\dif K(t)\in \opA(X(t))\dif t$.
\end{remark}

Our convergence results are derived within a specific geometric setting defined in terms of a summability condition on the Fenchel transform on the exterior penalty term $\Psi$.

\begin{assumption}[Attouch-Czarnecki condition, \cite{attouch2010asymptotic}]
\label{ass:CZ}
For all $p\in\range(\NC_{C})$ and all $T\geq 0$,  it holds 
\begin{equation}\label{eq:Fitz}
\int_{0}^{t}\beta(r)\left[\Psi^{\ast}\left(\frac{p}{\beta(r)}\right)-\support_{C}\left(\frac{p}{\beta(r)}\right)\right]\dif r<\infty\quad\forall 0\leq t\leq T.
\end{equation}
\end{assumption}
While \eqref{eq:Fitz} appears complicated, it is weaker than a Hölderian growth condition imposed on the penalty function $\Psi$, and can be verified in many concrete settings.

\begin{definition}[Error bound]
\label{def:EB}
A proper convex and lower semi-continuous function $f:\R^{d}\to\R\cup\{+\infty\}$ with $\argmin(f)\neq\emptyset$ satisfies a Hölderian growth condition with exponent $\rho\in(1,2]$ if, for some $\tau>0$, we have 
$$
\frac{\tau}{\rho}\dist(x,\argmin f)^{\rho}\leq f(x)-\min f\qquad \forall x\in\R^{d}.
$$
\end{definition}
Assuming that $\Psi$ satisfies a Hölderian growth condition around $\argmin \Psi=C$, we have 
$$
0\leq \Psi^{\ast}(z)-\support_{C}(z)\leq\tau^{1-\rho^{*}}\frac{1}{\rho^{*}}\norm{z}^{\rho^{*}}\qquad \forall z\in\R^{d},
$$
where $\rho^{*}$ is the dual conjugate exponent given by $\frac{1}{\rho}+\frac{1}{\rho^{*}}=1$. Therefore, the Hölderian growth condition implies \eqref{eq:Fitz} whenever $\beta(t)$ is a function satisfying the integrability condition 
$$
\int_{0}^{\infty}\tau^{1-\rho^{*}}\norm{p/\beta(r)}^{\rho^{*}}\dif t=\norm{p}^{\rho^{*}}\tau^{1-\rho^{*}}\int_{0}^{\infty}\abs{\beta(t)}^{-\rho^{*}}\dif t<\infty.
$$
In the important case where $\Psi(z)=\frac{1}{2}\dist(z,C)^{2}$, condition \eqref{eq:Fitz} simplifies to $\int_{0}^{\infty}\frac{1}{\beta(t)}\dif t<\infty$, which is satisfied by the polynomial regime $\beta(t)=(1+t)^{a}$, with $a>1$.

Besides assumptions on the geometry of the penalization process, we impose standard assumptions on the data generating the dynamical system \eqref{eq:SDIMMO}. 

\begin{assumption}\label{ass:Noisebound}
The diffusion term $\sigma:[0,\infty)\times\R^{d}\to\R^{d\times m}$ satisfies the Lipschitz condition 
\begin{equation}\label{eq:noiseLip}
\norm{\sigma(t,x)-\sigma(t,y)}_{\rm F}\leq \ell(t)\norm{x-y}\qquad\forall x,y\in\R^{d}
\end{equation}
for $\ell(\bullet)\in L^{2}(0,\infty;\R_{\geq 0})$, and one of the following conditions:
\begin{itemize}
\item[(i)] Uniformly bounded variance:  There exists $\sigma_{\ast}>0$ satisfying 
\begin{equation}\label{eq:UBV}\tag{UBV}
\norm{\sigma(t,x)}_{\rm F}^{2}\leq \sigma^{2}_{\ast}.
\end{equation}
\item[(ii)] Asymptotically small variance: 
\begin{equation}\label{eq:ASV}\tag{ASV}
\Sigma(t)\eqdef \sup_{x\in\R^{d}}\norm{\sigma(t,x)}_{\rm F}\in L^{2}([0,\infty);\R_{+}).
\end{equation}
\end{itemize}
\end{assumption}


If the diffusion term $\sigma$ remains bounded away from zero (a case which is possible under assumption \eqref{eq:UBV}), then it cannot be expected that solutions to the SDI \eqref{eq:SDIMMO} converge to solutions to \eqref{eq:MI}. In such a \emph{persistent noise} scenario, it is more relevant to study the ergodic properties of the stochastic process $X$ in terms of its infinitesimal generator. Existence and uniqueness of invariant measures for set-valued dynamical systems of the form \eqref{eq:SDIMMO} require stationary dynamics (i.e. $\beta(t)\equiv \bar{\beta}$ for $t\geq 0$), a setting which is not the main interest of this paper. Still, Section \ref{sec:conclusion} summarizes known results for this stationary setting, and points out interesting directions for future research. However, in this paper we mostly focus on the case where $\sigma$ is a vanishing diffusion term, as captured by condition \eqref{eq:ASV}. 
  
\subsection{Almost sure convergence of the trajectory}
\cite{cepa2006equations} proved that under the stated hypothesis there exists a unique solution $(X,K)$ to the SDI \eqref{eq:SDIMMO}. $L^{p},p\geq 2$ estimates on the sample paths have subsequently been exhibited in \cite{Bernardin:2003aa}. It is thus known that $X\in\bS^{p}_{d}[0,T]$ for all finite $T>0$. In particular,  $X$ is a semi-martingale. We use this fact to derive stronger bounds exploiting the additional structure provided by the geometric setting embodied by the Attouch-Czarnecki condition (Assumption \ref{ass:CZ}). 

\begin{proposition}\label{prop:1}
Let $X$ be the solution to \eqref{eq:SDIMMO} with $X(0)=\xi\in L^{p}(\Omega;\cl(\dom(\opA)))$ for some $p\geq 2$. Let Assumption \ref{ass:OpMonotone}-\ref{ass:Noisebound} hold true. Then, for every $z\in\zer(\opA+\NC_{C})$, we have:
\begin{itemize}
\item[(i)] If condition \eqref{eq:UBV} applies, then for all $T>0$ we have $\Ex\left[\sup_{0\leq t\leq T}\norm{X(t)}^{p}\right]<\infty$.\\
If condition \eqref{eq:ASV} holds, then $\Ex[\sup_{t\geq 0}\norm{X(t)}^{p}]<\infty.$ 
\item[(ii)] If condition \eqref{eq:ASV} applies, then $\lim_{t\to\infty}\norm{X(t)-z}$ exists almost surely. Moreover,
$$
\lim_{t\to\infty}\Ex\left(\norm{X(t)-z}\right)=\Ex\left(\lim_{t\to\infty}\norm{X(t)-z}\right),
$$
\item[(iii)] If condition \eqref{eq:ASV} holds, then 
$$
\int_{0}^{\infty}\beta(t)\Psi(X(t))\dif t<\infty\quad \Pr-\text{a.s.}
$$
In particular, $\liminf_{t\to\infty}\Psi(X(t))=0$ a.s. 
\end{itemize}
\end{proposition}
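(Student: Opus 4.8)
The strategy is the standard energy/Lyapunov method for monotone stochastic evolution equations, applied to the "anchor" function $V_z(t) = \tfrac12\norm{X(t)-z}^2$ for a fixed $z \in \zer(\opA+\NC_C)$, combined with the Attouch-Czarnecki estimate to control the penalty term and a martingale-convergence argument for the stochastic integral. First I would apply Itô's formula to $V_z$; since $X$ is a semimartingale with decomposition $\dif X(t) = -\beta(t)\nabla\Psi(X(t))\dif t + \sigma(t,X(t))\dif W(t) - \dif K(t)$ (from (SE.3)), this yields, $\Pr$-a.s. for all $t$,
\begin{align*}
\tfrac12\norm{X(t)-z}^2 = \tfrac12\norm{\xi - z}^2
&- \int_0^t \beta(r)\inner{\nabla\Psi(X(r)), X(r)-z}\dif r
- \int_0^t \inner{X(r)-z, \dif K(r)} \\
&+ \int_0^t \inner{X(r)-z, \sigma(r,X(r))\dif W(r)}
+ \tfrac12\int_0^t \norm{\sigma(r,X(r))}_{\rm F}^2 \dif r.
\end{align*}
By (SE.4) with the constant choice $(\zeta,\zeta^\ast) \equiv (z, z^\ast)$ for some $z^\ast \in \opA(z)$ with $-z^\ast \in \NC_C(z)$, the term $\inner{X(r)-z,\dif K(r)}$ dominates $\inner{X(r)-z, z^\ast}\dif r \geq -\inner{X(r)-z, -z^\ast}\dif r$, and since $-z^\ast \in \NC_C(z)$ one gets $\inner{X(r)-z, z^\ast} \geq 0$ once one also uses feasibility; more carefully, one splits $K$ using the inclusion structure $\opA + \NC_C$ and the equilibrium characterization \eqref{eq:eqi}. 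The key point is that the $K$-term contributes a non-negative quantity that can be dropped from the upper bound.

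Next I would handle the penalty term. Convexity of $\Psi$ gives $\inner{\nabla\Psi(X(r)), X(r)-z} \geq \Psi(X(r)) - \Psi(z) = \Psi(X(r))$ since $z \in C = \argmin\Psi$ and $\Psi(z)=0$. But we actually need the reverse direction to keep $\int_0^t \beta(r)\Psi(X(r))\dif r$ on the favourable side — which it is: moving it to the left of the inequality, $\tfrac12\norm{X(t)-z}^2 + \int_0^t \beta(r)\Psi(X(r))\dif r \leq \tfrac12\norm{\xi-z}^2 + M(t) + \tfrac12\int_0^t \Sigma(r)^2\dif r$, where $M(t)$ is the stochastic integral (a local martingale) and the last term is finite since $\Sigma \in L^2$ by \eqref{eq:ASV}. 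Here one must also incorporate the Attouch-Czarnecki/Fenchel inequality: when $z \notin \Int(\dom\Psi)$ or when the normal-cone term $p = -z^\ast$ is nonzero, the clean bound $\inner{\nabla\Psi(X(r)),X(r)-z}\geq\Psi(X(r))$ is supplemented by the Fenchel-Young correction $\beta(r)[\Psi^\ast(p/\beta(r)) - \support_C(p/\beta(r))]$, whose time-integral is finite by Assumption~\ref{ass:CZ}. In fact this is precisely where Assumption~\ref{ass:CZ} enters part (iii): it ensures the "error" accumulated from comparing $\opA$ against $\NC_C$ is integrable.

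For the almost-sure convergence of $\int_0^t \beta(r)\Psi(X(r))\dif r$, I would invoke the Robbins-Siegmund supermartingale convergence theorem (or a direct argument on the local martingale $M$): since the right-hand side consists of a constant, an integrable drift, and a local martingale whose quadratic variation $\int_0^t \norm{(X(r)-z)^\top\sigma(r,X(r))}^2\dif r$ is controlled by $\sup_t\norm{X(t)-z}^2$ (finite by part (i), which gives $\Ex[\sup_{t\geq0}\norm{X(t)}^p]<\infty$ under \eqref{eq:ASV}) times $\int_0^\infty\Sigma(r)^2\dif r < \infty$, the martingale $M$ converges a.s. to a finite limit. Therefore the left-hand side, which is non-decreasing in its integral part plus the non-negative $\tfrac12\norm{X(t)-z}^2$, is bounded a.s., forcing $\int_0^\infty \beta(r)\Psi(X(r))\dif r < \infty$ $\Pr$-a.s. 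Finally, to conclude $\lim_{t\to\infty}\Psi(X(t)) = 0$ a.s.: finiteness of $\int_0^\infty \beta(r)\Psi(X(r))\dif r$ with $\beta$ non-decreasing (hence $\beta(r)\geq\beta(0)>0$, or at least $\liminf\beta>0$) gives $\int_0^\infty \Psi(X(r))\dif r < \infty$; combined with the fact that $t\mapsto\Psi(X(t))$ is, along sample paths, of bounded oscillation on compacts — one shows $\Psi(X(t))$ cannot stay bounded away from $0$ using the uniform bound on $\sup_t\norm{X(t)}$ together with local Lipschitz continuity of $\Psi$ and the a.s. modulus of continuity of the semimartingale $X$ — one gets $\liminf_{t\to\infty}\Psi(X(t))=0$, and then upgrading to a true limit uses that $\lim_t\norm{X(t)-z}$ exists a.s. (part (ii)) so the trajectory stays in a compact set and $\Psi(X(t))$ is asymptotically regular.

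The main obstacle I anticipate is the rigorous treatment of the $K$-term and the interaction between the operator $\opA$ and the normal cone $\NC_C$ in (SE.4): one must choose the test process $(\zeta,\zeta^\ast)$ correctly (it only tests against $\gr(\opA)$, not $\gr(\opA+\NC_C)$), and carefully combine the resulting inequality with the Fenchel-transform bound of Assumption~\ref{ass:CZ} so that the penalty contribution $\int_0^t\beta(r)\Psi(X(r))\dif r$ lands with the correct sign while the error terms remain integrable — this bookkeeping, rather than any single estimate, is the delicate part, and it is likely dispatched by reusing the computation already underlying parts (i)-(ii) of the same Proposition.
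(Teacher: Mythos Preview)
Your proposal is correct and follows essentially the same route as the paper: It\^{o}'s formula on $\tfrac12\norm{X(t)-z}^2$, the (SE.4) test with $(\zeta,\zeta^\ast)\equiv(z,-p)$ where $p\in\NC_C(z)$ and $-p\in\opA(z)$, convexity of $\Psi$, and the Fenchel--Young manipulation that produces the Attouch--Czarnecki integrand $\beta(r)[\Psi^\ast(p/\beta(r))-\support_C(p/\beta(r))]$.

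One small algebraic point you should make explicit: the ``clean'' inequality
\[
\tfrac12\norm{X(t)-z}^2 + \int_0^t \beta(r)\Psi(X(r))\dif r \leq \tfrac12\norm{\xi-z}^2 + M(t) + \tfrac12\int_0^t\Sigma(r)^2\dif r
\]
is only valid when $p=0$. In general the $p$-term $\int_0^t\inner{p,X(r)-z}\dif r$ is not separately bounded, so to keep $\int\beta\Psi$ on the left \emph{and} invoke Fenchel--Young one must split: retain $\tfrac12\int_0^t\beta(r)\Psi(X(r))\dif r$ on the left and absorb the remaining $-\tfrac12\int\beta\Psi + \int\inner{p,X-z}$ into $\int_0^t\beta(r)h_C(2p/\beta(r))\dif r$, which is exactly what the paper does. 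Your pathwise martingale-convergence argument for (iii) is a legitimate alternative to the paper's route via expectations; both yield $\int_0^\infty\beta(r)\Psi(X(r))\dif r<\infty$ a.s., and both leave the final step $\Psi(X(t))\to 0$ somewhat informal.
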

\begin{proof}
(i) Consider the function $\ce_{z}(x)\eqdef\frac{1}{2}\norm{x-z}^{2}$. Let $(X,K)$ be a solution to \eqref{eq:SDIMMO} in the sense of Definition \ref{def:SDIOperator}. Then $X$ is almost surely continuous and the process $t\mapsto E_{z}(t,\omega)\eqdef \ce_{z}(X(t,\omega))$ is progressively measurable. It\^{o}'s formula yields 
\begin{equation}\label{eq:ItoOpen}
\begin{split}
\dif E_{z}(t)&=\inner{X(t)-z,-\beta(t)\nabla\Psi(X(t))}\dif t-\inner{X(t)-z,\dif K(t)}+\inner{\sigma(t,X(t))^{\top}(X(t)-z),\dif W(t)}\\
&+\frac{1}{2}\norm{\sigma(t,X(t))}^{2}_{\rm F}\dif t.
\end{split}
\end{equation}
Define $c(t)\eqdef\int_{0}^{t}\frac{1}{2}\norm{\sigma(s,X(s))}^{2}_{\rm F}\dif s$ and $M_{z}(t)\eqdef \int_{0}^{t}\inner{\sigma(s,X(s))^{\top}(X(s)-z),\dif W(s)}.$
Using $z\in C$, the convex gradient inequality implies
$$
0=\Psi(z)\geq\Psi(X(t))+\inner{\nabla\Psi(X(t)),z-X(t)}.
$$
Moreover, if $z\in\zer(\opA+\NC_{C})$, there exists $p\in\NC_{C}(z)$ such that $-p\in\opA(z)$. By definition of a solution to \eqref{eq:SDI}, we have from eq. \eqref{eq:KSubgradient} the inequality
$$
\int_{0}^{t}\inner{X(s)-z,\dif K(s)}\geq\int_{0}^{t}\inner{X(s)-z,-p}\dif s.
$$
Then, for all $t\in[0,T]$, we can continue with
\begin{align}
E_{z}(t)-E_{z}(0) & \leq -\int_{0}^{t}\beta(s)\inner{X(s)-z,\nabla\Psi(X(s))}\dif s-\int_{0}^{t}\inner{X(s)-z,\dif K(s)}+M_{z}(t)+c(t)\nonumber\\ 
& \leq -\int_{0}^{t}\beta(s)\Psi(X(s))\dif s +\int_{0}^{t}\inner{X(s)-z,p}\dif s+c(t)+M_{z}(t)\label{eq:Used}\\
&=\int_{0}^{t}\beta(s)\left[\inner{\frac{p}{\beta(s)},X(s)}-\Psi(X(s))-\inner{\frac{p}{\beta(s)},z}\right]\dif s+c(t)+M_{z}(t)\nonumber\\
&\leq \int_{0}^{t}\beta(s)\left[\Psi^{\ast}\left(\frac{p}{\beta(s)}\right)-\support_{C}\left(\frac{p}{\beta(s)}\right)\right]\dif s+c(t)+M_{z}(t).\nonumber
\end{align}
Next, we define 
\begin{align*}
&h_{C}(x^{*})\eqdef \Psi^{\ast}(x^{*})-\support_{C}(x^{*})\quad \forall x^{*}\in\dom(\Psi^{\ast}),\text{ and }H(t)\eqdef  \int_{0}^{t}\beta(s)h_{C}(p/\beta(s))\dif s.
\end{align*}
Hence, we can continue the previous estimate as 
$$
\norm{X(t)-z}^{2}\leq \norm{X(0)-z}^{2}+2H(t)+2M_{z}(t)+2c(t)\qquad t\in[0,T].
$$
The process $\{(M_{z}(t),\scrF_{t}):0\leq t\leq T\}$ is a martingale and by Assumption \ref{ass:CZ}, the process $t\mapsto H(t)$ is almost surely bounded. Additionally, $X\in \bS^{p}_{d}[0,T]$ for $p\geq 2$, so by using $(\sum_{i=1}^{n}x_{i})^{p}\leq n^{p-1}\sum_{i=1}^{n}x_{i}^{p}$, we obtain 
$$
\norm{X(t)-z}^{p}\leq 4^{\frac{p-2}{2}}\left[\norm{X(0)-z}^{p}+2^{p/2}H(t)^{p/2}+2^{\frac{p}{2}}c(t)^{p/2}+2^{\frac{p}{2}}\abs{M_{z}(t)}^{p/2}\right].
$$
Since $t\mapsto M_{z}(t)$ is a continuous local martingale, the Burkholder-Davis-Gundy inequality \cite{protter2005stochastic} yields a universal constant $C_{p/2}$, so that
\begin{align*}
\Ex\left[\sup_{t\in[0,T]}\abs{M_{z}(t)}^{\frac{p}{2}}\right]&=\Ex\left[\sup_{t\in[0,T]}\abs{\int_{0}^{t}\inner{\sigma(t,X(t))^{\top}(X(t)-z),\dif W(t)}}^{\frac{p}{2}}\right]\\
&\leq C_{p/2}\Ex\left[\left(\int_{0}^{T}\norm{\sigma(t,X(t))^{\top}(X(t)-z)}^{2}\dif t\right)^{\frac{p}{4}}\right].
\end{align*}
Using next the boundedness of the volatility function and Young's inequality $ab\leq\frac{a^{2}}{2K}+\frac{Kb^{2}}{2}$, we can further bound this expression as 
\begin{align*}
4^{\frac{p-2}{2}}2^{\frac{p}{2}}\Ex\left[\sup_{t\in[0,T]}\abs{M_{z}(t)}^{\frac{p}{2}}\right]&\leq4^{\frac{p-2}{2}}2^{\frac{p}{2}} C_{p/2}\Ex\left[\left(\int_{0}^{T}\norm{\sigma(t,x)}^{2}_{\rm F}\cdot\norm{X(t)-z}^{2}\dif t\right)^{\frac{p}{4}}\right]\\
&\leq 4^{\frac{p-2}{2}}2^{\frac{p}{2}}C_{1}\Ex\left[\sup_{t\in 0,T]}\norm{X(t)-z}^{\frac{p}{2}}\cdot\left(\int_{0}^{T}\norm{\sigma(t,x)}^{2}_{\rm F}\dif t\right)^{\frac{p}{4}}\right]\\
&\leq 4^{\frac{p-2}{2}}2^{\frac{p}{2}}C_{1}\Ex\left[\frac{1}{2\gamma}\sup_{t\in[0,T]}\norm{X(t)-z}^{p}+\frac{\gamma}{2}\left(\int_{0}^{T}\norm{\sigma(t,x)}_{\rm F}^{2}\dif t\right)^{p/2}\right].
\end{align*}
where $\gamma>0$ and $C_{1}>0$ is a universal constant. Choosing $\gamma\eqdef 4^{\frac{p-2}{2}}2^{\frac{p}{2}}C_{1}$, we are left with the expression 
$$
4^{\frac{p-2}{2}}2^{\frac{p}{2}}\Ex\left[\sup_{t\in[0,T]}\abs{M_{z}(t)}^{\frac{p}{2}}\right]\leq\frac{1}{2}\Ex\left(\sup_{t\in[0,T]}\norm{X(t)-z}^{p}\right)+C_{2}\Ex[c(T)^{p/2}],
$$
where $C_{2}\eqdef C^{2}_{1}2^{3p-5}.$ Hence, for all $T>0$
\begin{align*}
\Ex\left[\sup_{t\in[0,T]}\norm{X(t)-z}^{p}\right]\leq 2^{p-1}\Ex\left[\norm{X(0)-z}^{p}\right]+2^{\frac{3p}{2}-1}H(T)^{p/2}+C_{3}\Ex[c(T)^{p/2}],
\end{align*}
where $C_{3}\eqdef 2^{\frac{3p}{2}-1}+2C_{2}$. We conclude that $\Ex[\sup_{0\leq t\leq T}\norm{X(t)}^{p}]<\infty$ for all $T>0$.

In the regime \eqref{eq:ASV}, we can strengthen the above bound to 
\begin{align*}
\Ex\left[\sup_{t\in[0,T]}\norm{X(t)-z}^{p}\right]\leq 2^{p-1}\Ex\left[\norm{X(0)-z}^{p}\right]+2^{\frac{3p}{2}-1}H(\infty)^{p/2}+C_{3}\Ex[c(\infty)^{p/2}],
\end{align*}
where $H(\infty)=\lim_{T\to\infty}H(T)$ and $c(\infty)=\lim_{T\to\infty}c(T)$, limits whose existence is justified by dominated convergence. This shows that $\Ex[\sup_{t\geq 0}\norm{X(t)}^{p}]<\infty$. 

For (ii), we start with the estimate 
$$
\frac{1}{2}\norm{X(t)-z}^{2}\leq \frac{1}{2}\norm{X(0)-z}^{2}+\int_{0}^{\infty}\beta(t)h_{C}(p/\beta(t))\dif t+M_{z}(t)+c(t) . 
$$
Setting $A_{t}\eqdef c(t)+\int_{0}^{\infty}\beta(t)h_{C}(p/\beta(t))\dif t$, we immediately obtain from Proposition \ref{prop:RS}, that $\lim_{t\to\infty}\norm{X(t)-z}$ exists and is finite almost surely. Hence, for every $z\in\zer(\opA+\NC_{C})$, there exists a set $\Omega_{z}\subset\Omega$ with $\Pr(\Omega_{z})=1$, such that 
$$
(\forall\omega\in\Omega_{z}):\quad\lim_{t\to\infty}\norm{X(t,\omega)-z}=\tau_{z}(\omega)\in[0,\infty). 
$$
Pick a countable dense subset $\scrZ\subset\zer(\opA+\NC_{C})$ with $\cl(\scrZ)=\zer(\opA+\NC_{C})$ and define $\bar{\Omega}\eqdef\bigcap_{z\in\scrZ}\Omega_{z}$. Then $\Pr(\bar{\Omega})=1-\Pr(\bar{\Omega}^{c})=1-\Pr(\bigcup_{z\in\scrZ}\Omega^{c}_{z})\geq 1-\sum_{z\in\scrZ}\Pr(\Omega_{z}^{c})=1$. Hence, $\Pr(\bar{\Omega})=1$. Fix $z\in\zer(\opA+\NC_{C})$ and consider a sequence $z_{i}\to z$ with $z_{i}\in\scrZ$ for all $i\in\N$. Let
$$
\tau_{i}(\omega)\eqdef\lim_{t\to\infty}\norm{X(t,\omega)-z_{i}}\quad \forall \omega\in\Omega_{z_i}.
$$
It then easily follows for $\omega\in\bar{\Omega}$:
$$
\tau_{i}(\omega)-\norm{z_{i}-z}\leq \lim_{t\to\infty}\norm{X(t,\omega)-z}\leq \tau_{i}(\omega)+\norm{z_{i}-z}.
$$
Hence, $\lim_{i\to\infty}\tau_{i}(\omega)$ exists for all $\omega\in\bar{\Omega}$ and we deduce from this the existence of 
$\lim_{t\to\infty}\norm{X(t)-z}$, except on a set of $\Pr$-measure zero. 

To prove  (iii), assume that \eqref{eq:ASV} applies, and fix a reference point $z\in\zer(\opA+\NC_{C})$. Then, there exists $p\in\NC_{C}(z)$ with $-p\in\opA(z)$. From \eqref{eq:Used}, we have 
\begin{align*}
E_{z}(t)-E_{z}(0)+\int_{0}^{t}\frac{\beta(r)}{2}\Psi(X(r))\dif r & \leq -\int_{0}^{t}\frac{\beta(r)}{2}\Psi(X(r))\dif r+\int_{0}^{t}\inner{p,X(r)-z}\dif r+M_{z}(t)+c(t)\\
&\leq \int_{0}^{t}\beta(r)\left[\Psi^{\ast}\left(\frac{2p}{\beta(r)}\right)-\support_{C}\left(\frac{2p}{\beta(r)}\right)\right]\dif r+c(t)+M_{z}(t).
\end{align*}
Rearranging and taking expectations on both sides, leaves us with 
\begin{align*}
\frac{1}{2}\Ex\left(\int_{0}^{t}\beta(r)\Psi(X(r))\dif r\right)&\leq \Ex[ E_{z}(0)]+\Ex[c(t)]+ \int_{0}^{t}\beta(r)\left[\Psi^{\ast}\left(\frac{2p}{\beta(r)}\right)-\support_{C}\left(\frac{2p}{\beta(r)}\right)\right]\dif r\\ 
& \leq   \Ex[ E_{z}(0)]+\Ex[c(\infty)]+ \int_{0}^{\infty}\beta(r)\left[\Psi^{\ast}\left(\frac{2p}{\beta(r)}\right)-\support_{C}\left(\frac{2p}{\beta(r)}\right)\right]\dif r<\infty
\end{align*}
The random variable $I_{t}(\omega)\eqdef\int_{0}^{t}\beta(r)\Psi(X(r,\omega))\dif r$ is non-negative and increasing. The monotone convergence theorem gives 
\begin{align*}
\Ex[\int_{0}^{\infty}\beta(r)\Psi(X(r))\dif r]&=\lim_{t\to\infty}\Ex[I_{t}]\\
&\leq 2E_{z}(0)+\int_{0}^{\infty}h_{C}\left(\frac{2p}{\beta(r)}\right)\dif r+2\Ex[c(\infty)]<\infty.
\end{align*}
A non-negative random variable with finite expectation is finite almost surely. Hence,
\[
\int_{0}^{\infty}\beta(r)\Psi(X(r))\dif r<\infty\qquad \Pr-\text{a.s.} 
\]
Since $t\mapsto \beta(t)$ is positive and non-decreasing, we have $\beta(t)\geq\beta(0)>0$. Therefore, 
\[
\int_{0}^{\infty}\Psi(X(r))\dif r\leq\frac{1}{\beta(0)}\int_{0}^{\infty}\beta(r)\Psi(X(r))\dif r<\infty\qquad\text{a.s.}
\]
If on some sample path $\omega$ it would hold $\liminf_{t\to\infty}\Psi(X(t,\omega))\geq\delta>0$, then for sufficiently large $T$, $\Psi(X(t,\omega))\geq\frac{\delta}{2}$ for all $t\geq T$. This would imply 
\[
\int_{T}^{\infty}\Psi(t,\omega))\dif t=\infty,
\]
a contradiction. Hence, $\liminf_{t\to\infty}\Psi(X(t))=0$ a.s. 
\end{proof}
We next discuss weak ergodic convergence of the stochastic process. Our first result is a rate estimate on the Ces\`{a}ro average of the process $X$ in terms of a restricted merit function associated with problem \eqref{eq:MI}. To establish this result, we define the Br\'{e}zis-Haraux function associated with a maximally monotone operator $\opM:\R^{d}\to 2^{\R^{d}}$ \cite{Brezis:1976aa} as 
\begin{equation}
G_{\opM}(x,u)\eqdef\sup_{(y,v)\in\gr(\opM)}\inner{x-y,v-u}.
\end{equation}
The following properties of $G_{\opM}$ are well known \cite{attouch2018asymptotic,combettes2025lower}:
\begin{align*}
    &G_{\opM}(x,u)= 0\qquad \forall (x,u)\in\gr(\opM),\text{ and }\\
    &G_{\opM}(x,u)\geq \norm{x-(\Id+\opM)^{-1}(x+u)}^{2}\geq 0\qquad \forall (x,u)\in\R^{d}\times\R^{d}.
\end{align*}
In order to obtain a quantitative convergence result of the ergodic average trajectory, we focus on the level sets of the function $\gap(x)\eqdef G_{\opA+\NC_{C}}(x,0)$, which can be expressed more explicitly as 
$$
\gap(x)=\sup_{y\in C\cap\dom(\opA)}\sup_{y^{*}\in(\opA+\NC_{C})(y)}\inner{x-y,v}.
$$
The function $x\mapsto \gap(x)\in[0,\infty]$ is convex and lower semi-continuous, as the supremum over affine functions. By Assumption \ref{ass:OpMonotone}, it is proper. To obtain stronger results, we impose the following Slater type constraint qualification on the data of \eqref{eq:MI}. 
\begin{assumption}\label{ass:Slater}
$\Int(\dom(\opA))\cap\Int(C)\neq\emptyset.$
\end{assumption}
Assumption \ref{ass:OpMonotone} guarantees that $\gap$ is proper and Assumption \ref{ass:Slater} yields the identification of $\scrS$ with the zeros of the gap function \cite[][Lemma 4]{burachik1998generalized}: 
$\gap(x)=0 \iff x\in\scrS.$ Hence, $x\mapsto \gap(x)$ is a gap function for problem \eqref{eq:MI}\footnote{A function $g:\scrH\to\scrH$ is called a gap function associated with a monotone operator $\opM$ if $g(x)\geq 0$ for all $x\in\scrH$ and $g(x)=0$ whenever $x\in \opM^{-1}(0)$.}. In fact, \cite{BorDut16} showed that $x\mapsto \gap(x)$ is the minimal translation invariant gap function for \eqref{eq:MI}. To cope with the possible scenario in which $\dom(\opA)\cap C$ is unbounded, we consider a restricted merit function criterion first introduced in \cite{Nes07}. We require a mild continuity property on the operator $\opA:\R^{d}\to 2^{\R^{d}}$. 
\begin{assumption}\label{ass:USC}
    $\opA:\R^{d}\to 2^{\R^{d}}$ is upper semi-continuous (USC) on $C$:  For all $x\in C$ and for all open sets $W\subset\R^{d}$ such that $\opA(x)\subset W$, there exists an open neighborhood $U_{x}\subset\R^{d}$ of $x$ for which $\opA(x')\subset W$ for all $x'\in U_{x}$. 
\end{assumption}
Note that Assumption \ref{ass:USC} implicitly assumes $C\subseteq\dom(\opA)$. By Assumption \ref{ass:OpMonotone}, we can choose a point $a\in\Int(\dom\opA)$ and $\delta>0$ sufficiently small so that 
$B_{\delta}\eqdef \{x\in C\vert\; \norm{x-a}\leq \delta\}\subset \dom(\opA).$ The \emph{restricted gap function} is the mapping $\gap_{\delta}:\R^{d}\to\R$ defined as
\begin{equation}\label{eq:restrictedgap}
\gap_{\delta}(x)\eqdef \sup\{\inner{y^{*},x-y}\vert y\in B_{\delta},y^{*}\in(\opA+\NC_{C})(y)\}.
\end{equation}
It's role as a local optimality measure is made precise in Lemma \ref{lem:gap}, which extends Lemma 1 in \cite{Nes07} to the set-valued case.
\begin{lemma}
\label{lem:gap}
For any $x\in B_{\delta}$, we have $\Theta_{\delta}(x)\geq 0$. Moreover, if $\bar{x}\in B_{\delta}\cap\scrS$ , then $\Theta_{\delta}(x)=0$. On the other hand, if $\Theta_{\delta}(x)=0$ for some $x\in C$ with $\norm{x-a}<\delta$, then $x\in\scrS$. 
\end{lemma}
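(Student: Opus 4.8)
The plan is to handle the three assertions separately; the first two are immediate from the definition \eqref{eq:restrictedgap} and the characterization \eqref{eq:eqi}, and the third carries the actual content.

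For non-negativity I would fix $x\in B_{\delta}$ and simply take $y=x$ in the supremum \eqref{eq:restrictedgap}: since $B_{\delta}\subset\dom(\opA)$ and $x\in C$, the set $(\opA+\NC_{C})(x)\supseteq\opA(x)+\{0\}$ is nonempty, so any $x^{*}\in\opA(x)$ is admissible and contributes $\inner{x^{*},x-x}=0$, giving $\gap_{\delta}(x)\geq 0$. For the second claim, if $\bar{x}\in B_{\delta}\cap\scrS$ then \eqref{eq:eqi} gives $\inner{y^{*},y-\bar{x}}\geq 0$ for every $(y,y^{*})\in\gr(\opA+\NC_{C})$, so each admissible pair in \eqref{eq:restrictedgap} contributes a non-positive term and $\gap_{\delta}(\bar{x})\leq 0$; together with non-negativity this forces $\gap_{\delta}(\bar{x})=0$.

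For the converse, suppose $\gap_{\delta}(x)=0$ with $x\in C$ and $\norm{x-a}<\delta$. By \eqref{eq:eqi} it is enough to show $\inner{w,u-x}\geq 0$ for an arbitrary $(u,w)\in\gr(\opA+\NC_{C})$, where necessarily $u\in C$ because $\NC_{C}(u)\neq\emptyset$. I would localize along the segment $y_{t}\eqdef(1-t)x+tu\in C$, $t\in(0,1)$: since $\norm{x-a}<\delta$ \emph{strictly}, one has $\norm{y_{t}-a}\leq\norm{x-a}+t\norm{u-x}<\delta$ for all small enough $t$, so $y_{t}\in B_{\delta}$, and since $y_{t}\in C\cap\dom(\opA)$ the set $(\opA+\NC_{C})(y_{t})$ is nonempty; pick $w_{t}$ in it. Admissibility of $(y_{t},w_{t})$ in \eqref{eq:restrictedgap} and $\gap_{\delta}(x)=0$ give $\inner{w_{t},x-y_{t}}\leq 0$, i.e.\ $\inner{w_{t},u-x}\geq 0$ after using $x-y_{t}=t(x-u)$; monotonicity of $\opA+\NC_{C}$ applied to $(u,w)$ and $(y_{t},w_{t})$ gives $\inner{w-w_{t},u-y_{t}}\geq 0$, i.e.\ $\inner{w,u-x}\geq\inner{w_{t},u-x}\geq 0$ after using $u-y_{t}=(1-t)(u-x)$ and dividing by $1-t>0$. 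Since $(u,w)$ was arbitrary, $x\in\scrS$.

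The delicate point is precisely the localization: the interpolants $y_{t}$ must stay inside $B_{\delta}$, which is why the hypothesis of the third claim is the strict inequality $\norm{x-a}<\delta$ and not just $x\in B_{\delta}$, and $(\opA+\NC_{C})(y_{t})$ must be nonempty, which relies on $a\in\Int(\dom\opA)$ having been chosen so that $B_{\delta}\subset\dom(\opA)$, together with $y_{t}\in C$. Using monotonicity to compare $w_{t}$ with the fixed $w$ directly is what lets the set-valued extension of \cite[Lemma~1]{Nes07} go through without any selection or limiting argument in $t$, so no continuity of $\opA$ beyond the standing hypotheses is needed.
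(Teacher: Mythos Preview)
Your proof is correct, and in the converse direction it is genuinely different from the paper's argument and in fact more economical. The paper first reduces $\Theta_{\delta}$ to a gap involving only $\opA$ (not $\opA+\NC_{C}$), then runs a Minty-type contradiction that requires the upper semi-continuity hypothesis on $\opA$ (Assumption~\ref{ass:USC}) to control $\sup_{v\in\opA(y_{t})}\inner{v,z-y_{t}}$ as $t\to 0$; it then needs a second step to pass from the VI over $B_{\delta}$ to the VI over $C$. Your argument bypasses both: by interpolating towards an \emph{arbitrary} $(u,w)\in\gr(\opA+\NC_{C})$ and using monotonicity of $\opA+\NC_{C}$ to compare $w$ with the selection $w_{t}$, you obtain $\inner{w,u-x}\geq\inner{w_{t},u-x}\geq 0$ directly, for any single small $t>0$, with no limit or USC needed. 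The only structural inputs you use are convexity of $C$, the inclusion $B_{\delta}\subset\dom(\opA)$, and monotonicity, so your proof actually shows that Assumption~\ref{ass:USC} is not needed for Lemma~\ref{lem:gap}.
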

\begin{proof}
See Appendix \ref{app:proofs}.
 \end{proof}
\begin{theorem}\label{th:ergodic2}
Define the Ces\`{a}ro average of the process $X$ as  
$$
\bar{X}(\omega,t)=\Avg(X(\omega,\cdot);0,t)=\frac{1}{t}\int_{0}^{t}X(\omega,s)\dif s.
$$
Let $a\in\Int(\dom(\opA)\cap\Int(C)$ and $B_{\delta}=\bar{\ball}(a,\delta)\subset\Int\dom(\opA)\cap\Int(C)$. If Assumption \ref{ass:Noisebound}\eqref{eq:ASV} holds true, then we have $\Ex[\Theta_{\delta}(\bar{X}(t))]=O(1/t)$ for all $t>0$. 
\end{theorem}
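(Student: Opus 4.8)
The plan is to marry the energy estimate behind Proposition~\ref{prop:1} with the classical Ces\`aro/ergodic trick, after first choosing the reference ball $B_\delta$ so that the normal-cone part of $\opA+\NC_C$ drops out on $B_\delta$. \textbf{Localising $B_\delta$.} Using the Slater condition (Assumption~\ref{ass:Slater}) I would fix $a\in\Int(\dom\opA)\cap\Int(C)$ and $\delta>0$ small enough that the closed ball $\{y:\norm{y-a}\le\delta\}$ lies inside $\Int(\dom\opA)\cap\Int(C)$. Then $B_\delta=\{y:\norm{y-a}\le\delta\}$, and $\NC_C(y)=\{\0\}$ for every $y\in B_\delta$, so $(\opA+\NC_C)(y)=\opA(y)$ there; moreover, since a maximally monotone operator is locally bounded on the interior of its domain, $L\eqdef\sup_{y\in B_\delta}\sup_{\hat y\in\opA(y)}\norm{\hat y}<\infty$, so $\Theta_\delta$ is finite-valued and $\Theta_\delta(x)=\sup\{\inner{\hat y,x-y}:y\in B_\delta,\ \hat y\in\opA(y)\}$.

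\textbf{A pathwise bound for fixed $(y,\hat y)$.} Fix $(y,\hat y)\in\gr(\opA)$ with $y\in B_\delta$ and apply It\^o's formula to $t\mapsto\tfrac12\norm{X(t)-y}^2$ as in \eqref{eq:ItoOpen}. Since $\Psi(y)=0$ and $\nabla\Psi(X(s))\in\partial\Psi(X(s))$, one has $\inner{\nabla\Psi(X(s)),X(s)-y}\ge\Psi(X(s))\ge0$; and (SE.4) applied to the constant test process $(\zeta,\zeta^\ast)\equiv(y,\hat y)$ gives $\int_0^t\inner{X(s)-y,\dif K(s)}\ge\int_0^t\inner{X(s)-y,\hat y}\dif s$. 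Writing $M_y(t)\eqdef\int_0^t(X(s)-y)^\top\sigma(s,X(s))\dif W(s)$ and $c(t)\eqdef\tfrac12\int_0^t\norm{\sigma(s,X(s))}_{\rm F}^2\dif s$, and dropping the nonnegative term $\tfrac12\norm{X(t)-y}^2$, \eqref{eq:ItoOpen} collapses to $\int_0^t\inner{X(s)-y,\hat y}\dif s\le\tfrac12\norm{X(0)-y}^2+M_y(t)+c(t)$. Because $\hat y$ is a fixed vector the left side equals $t\inner{\hat y,\bar X(t)-y}$; taking the supremum over admissible $(y,\hat y)$ (the right side is independent of $\hat y$) gives, $\Pr$-a.s., for every $t>0$,
$$
t\,\Theta_\delta(\bar X(t))\ \le\ \sup_{y\in B_\delta}\Bigl(\tfrac12\norm{X(0)-y}^2+M_y(t)\Bigr)+c(t).
$$

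\textbf{Taking expectations.} The first term is $\le(\norm{X(0)}+\norm{a}+\delta)^2$, with $\Ex\norm{X(0)}^2<\infty$. For the martingale term, decompose $M_y(t)=N(t)-y^\top V(t)$ with $N(t)\eqdef\int_0^t X(s)^\top\sigma(s,X(s))\dif W(s)$ and $V(t)\eqdef\int_0^t\sigma(s,X(s))\dif W(s)$, so that $\sup_{y\in B_\delta}M_y(t)=N(t)-a^\top V(t)+\delta\norm{V(t)}$. Under \eqref{eq:ASV} together with Proposition~\ref{prop:1}(i) (for $p=2$), the integrands of $N$ and $V$ belong to $L^2(\Omega\times[0,t])$; hence $N$ and $V$ are centred square-integrable martingales, so $\Ex[N(t)]=\Ex[a^\top V(t)]=0$, and It\^o's isometry yields $\Ex\norm{V(t)}^2=\Ex\int_0^t\norm{\sigma(s,X(s))}_{\rm F}^2\dif s\le\int_0^\infty\Sigma(r)^2\dif r$, whence $\Ex[\delta\norm{V(t)}]\le\delta\bigl(\int_0^\infty\Sigma(r)^2\dif r\bigr)^{1/2}$. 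Finally $c(t)\le\tfrac12\int_0^\infty\Sigma(r)^2\dif r$ deterministically. Adding these bounds gives $t\,\Ex[\Theta_\delta(\bar X(t))]\le\kappa$ for a constant $\kappa$ independent of $t$, which is exactly $\Ex[\Theta_\delta(\bar X(t))]=O(1/t)$.

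\textbf{Where the difficulty lies.} The algebra is routine once two structural points are in place. First, $B_\delta$ must sit inside $\Int C$ (this is where Assumption~\ref{ass:Slater} is really used): otherwise the supremum defining $\Theta_\delta$ ranges over the unbounded cones $\NC_C(y)$ and $\Theta_\delta(\bar X(t))=+\infty$ on the event $\{\bar X(t)\notin C\}$, which has positive probability in general, making the statement false. Second, one cannot interchange $\sup_y$ with $\Ex$; this is circumvented precisely because $M_y(t)$ is \emph{affine} in $y$, so its supremum over the ball $B_\delta$ is computed in closed form and each resulting stochastic integral is then controlled by zero-mean and It\^o-isometry estimates that are uniform in $t$ thanks to the summability built into \eqref{eq:ASV}.
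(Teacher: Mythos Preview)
Your argument is correct, and it follows a route that is genuinely different from the paper's.

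The paper works with general $(z,z^{*})\in\gr(\opA+\NC_{C})$, carries the Attouch--Czarnecki term $\int_{0}^{t}\beta(s)h_{C}(p/\beta(s))\dif s$ throughout, and handles the $z$-dependence of the stochastic integral by introducing the auxiliary process $Z(t)=X(0)-\int_{0}^{t}\sigma(s,X(s))\dif W(s)$: It\^o applied to $\tfrac12\norm{Z(t)-z}^{2}$ yields $M_{z}(t)\le U(t)+E_{z}(0)+c(t)$ for a $z$-free martingale $U$, after which the supremum over $B_{\delta}$ is taken pathwise. Your argument differs on both points. First, by placing $B_{\delta}$ inside $\Int(C)$ via Slater, you eliminate the normal-cone contribution altogether, so there is no $h_{C}$ term to control and Assumption~\ref{ass:CZ} is in fact never used in your proof of this theorem; this also cleanly sidesteps the residual $p$-dependence that survives in the paper's final display. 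Second, instead of the auxiliary-process device you exploit the affine structure $M_{y}(t)=N(t)-y^{\top}V(t)$ and compute the supremum over the ball in closed form, then control $\Ex\norm{V(t)}$ by It\^o isometry. Your route is more elementary and yields a tighter statement (no Attouch--Czarnecki hypothesis needed here). The paper's auxiliary-process bound, on the other hand, is a genuinely pathwise estimate that does not rely on $B_{\delta}$ being a Euclidean ball, and the same martingale $U$ is reused verbatim in the concentration analysis of Section~\ref{sec:subgradient}; so the two approaches trade directness for reusability.
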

\begin{proof}
Let $(z,z^{\ast})\in\gr(\opA+\NC_{C})$ so that there exists $p\in\NC_{C}(z)$ with $z^{\ast}-p\in\opA(z)$. Following the steps leading to eq. \eqref{eq:Used}, we obtain
$$
E_{z}(t)-E_{z}(0)+t\inner{z^{\ast},\bar{X}(t)-z}\leq -\int_{0}^{t}\beta(s)\Psi(X(s))\dif s +\int_{0}^{t}\inner{p,X(s)-z}\dif s+M_{z}(t)+c(t).
$$

Rearranging terms gives us 
\begin{equation}\label{eq:ergodic1}
\inner{\bar{X}(t)-z,z^{\ast}}\leq \frac{1}{t}\int_{0}^{t}\beta(s)h_{C}(p/\beta(s))\dif s+\frac{E_{z}(0)+M_{z}(t)+c(t)}{t}.
\end{equation}
The remaining challenge in the proof is to replace the $z$-dependent martingale term $M_{z}(t)$ with a stochastic process which is independent of the anchor point $z$. This will allow us to take the supremum over $z$ and therefore bring the restricted gap function into the picture. To achieve such a uniform bound, we are following a similar argument as used in \cite{MerStaJOTA18}. Define the auxiliary process $Z=\{(Z(t),\scrF_{t});t\geq 0\}$ by 
$$
Z(0)\eqdef X(0), \quad Z(t)\eqdef X(0)-\int_{0}^{t}\sigma(s,X(s))\dif W(s)\quad \forall t\geq 0.
$$
Let $\pi(t)\eqdef\frac{1}{2}\norm{Z(t)-z}^{2}$, so that by It\^{o}'s formula 
$$
\pi(t)-\pi(0)=\int_{0}^{t}\inner{z-Z(s),\sigma(s,X(s))\dif W(s)}+\frac{1}{2}\int_{0}^{t}\norm{\sigma(s,X(s))}^{2}_{\rm F}\dif s.
$$
Rearranging this, and using the variance bound, we are left with 
\begin{equation}\label{eq:auxbound}
\int_{0}^{t}\inner{Z(s)-z,\sigma(s,X(s))\dif W(s)}\leq \frac{1}{2}\norm{X(0)-z}^{2}+c(t_{1})=E_{z}(0)+c(t). 
\end{equation}
Using this bound, we see for $t\geq 0$:
\begin{align*}
M_{z}(t)&=\int_{0}^{t}\inner{X(s)-z,\sigma(s,X(s))\dif W(s)}\\
&=\int_{0}^{t}\inner{X(s)-Z(s),\sigma(s,X(s))\dif W(s)}+\int_{0}^{t}\inner{Z(s)-z,\sigma(s,X(s))\dif W(s)}\\
&\leq \int_{0}^{t}\inner{X(s)-Z(s),\sigma(s,X(s))\dif W(s)}+E_{z}(0)+c(t)\\
&=U(t)+E_{z}(0)+c(t).
\end{align*}
The process $U\eqdef \{(U(t),\scrF_{t});t\geq 0\}$ defined by 
$$
U(t)\eqdef \int_{0}^{t}\inner{X(s)-Z(s),\sigma(s,X(s))\dif W(s)}\qquad t\geq 0,
$$
is a continuous martingale. Substituting this into \eqref{eq:ergodic1}, we can continue as follows: 
\begin{equation}\label{eq:MainInner}
\inner{\bar{X}(t)-z,z^{\ast}}\leq \frac{1}{t}\int_{0}^{t}\beta(s)h_{C}(p/\beta(s))\dif s+\frac{2}{t}[E_{z}(0)+c(t)]+\frac{1}{t}U(t).
\end{equation}
Our construction thus produced an upper bound which is independent of the pair $(z,z^{\ast})\in\gr(\opA+\NC_{C})$, besides the choice of the normal cone element $p\in\NC_{C}(z)$. However since $z\in B_{\delta}$, we have $\NC_{C}(z)=\{0\}$, so that the above display reduces to 
$$
\inner{\bar{X}(t)-z,z^{\ast}}\leq \frac{2}{t}[E_{z}(0)+c(t)]+\frac{1}{t}U(t).
$$
We now make use of the definition of the restricted merit function so that 
$$
\sup_{z\in B_{\delta},z^{*}\in(\opA+\NC_{C})(z)}\inner{\bar{X}(t)-z,z^{\ast}}\leq \frac{2}{t}[\sup_{z\in B_{\delta}}E_{z}(0)+c(t)]+\frac{1}{t}U(t).
$$
Because $U$ is a martingale, we have $\Ex[U(t)]=0$, and it follows 
$$
\Ex\left[\Theta_{\delta}(\bar{X}(t))\right]\leq \frac{2}{t}[\sup_{z\in B_{\delta}}E_{z}(0)+c(t)].
$$
Finally, via \eqref{eq:ASV}, we deduce 
$$
\Ex[\Theta_{\delta}(\bar{X}(t)]\leq \frac{\ca_{\delta}}{t},\text{ where }\ca_{\delta}\eqdef 2\Ex\left[\sup_{z\in B_{\delta}}E_{z}(0)\right]+\int_{0}^{\infty}\Sigma^{2}(s)\dif s. 
$$
This completes the proof.
\end{proof}

\begin{corollary}
Define the deterministic function $\chi^{x}_{t}\eqdef \Ex[\bar{X}(t)\vert X(0)=x]$. Under Assumption \ref{ass:Noisebound}\eqref{eq:ASV}, every accumulation point of $\chi^{x}_{\bullet}$ belongs to $\scrS=\zer(\opA+\NC_{C})$.
\end{corollary}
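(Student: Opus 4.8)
The plan is to bypass the restricted gap function $\Theta_{\delta}$ altogether and instead pass to the limit directly in the pointwise estimate \eqref{eq:MainInner} obtained inside the proof of Theorem~\ref{th:ergodic2}, closing with the Minty characterization \eqref{eq:eqi} of the maximally monotone operator $\opA+\NC_{C}$. The payoff is that one never has to certify that an accumulation point lands strictly inside the ball $B_{\delta}$, which would be the price of invoking Lemma~\ref{lem:gap}.

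First I would fix an arbitrary pair $(z,z^{\ast})\in\gr(\opA+\NC_{C})$ together with $p\in\NC_{C}(z)$ such that $z^{\ast}-p\in\opA(z)$, exactly as in the proof of Theorem~\ref{th:ergodic2}, so that \eqref{eq:MainInner} reads
\[
\inner{\bar X(t)-z,z^{\ast}}\le \frac1t\int_{0}^{t}\beta(s)h_{C}\!\left(\tfrac{p}{\beta(s)}\right)\dif s+\frac2t\bigl[E_{z}(0)+c(t)\bigr]+\frac1t U(t),
\]
where, crucially, the continuous martingale $U$ with $U(0)=0$ does not depend on $(z,z^{\ast})$. Taking the conditional expectation $\Ex[\,\cdot\mid X(0)=x]$, I would use: $\Ex[U(t)\mid X(0)=x]=0$ since $U$ is an $(\scrF_{t})$-martingale and $X(0)$ is $\scrF_{0}$-measurable; $E_{z}(0)=\tfrac12\norm{x-z}^{2}$ is deterministic; $c(t)\le\tfrac12\int_{0}^{\infty}\Sigma(s)^{2}\dif s<\infty$ by \eqref{eq:ASV}; and $\int_{0}^{\infty}\beta(s)h_{C}(p/\beta(s))\dif s<\infty$ by Assumption~\ref{ass:CZ}. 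This gives
\[
\inner{\chi^{x}_{t}-z,z^{\ast}}\le \frac1t\left(\int_{0}^{\infty}\beta(s)h_{C}\!\left(\tfrac{p}{\beta(s)}\right)\dif s+\norm{x-z}^{2}+\int_{0}^{\infty}\Sigma(s)^{2}\dif s\right)=O(1/t),
\]
hence $\limsup_{t\to\infty}\inner{\chi^{x}_{t}-z,z^{\ast}}\le 0$ for every $(z,z^{\ast})\in\gr(\opA+\NC_{C})$.

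Next I would record that $\{\chi^{x}_{t}\}_{t\ge 1}$ is bounded: under \eqref{eq:ASV}, Proposition~\ref{prop:1}(i) yields $\Ex[\sup_{s\ge 0}\norm{X(s)}^{p}\mid X(0)=x]<\infty$, and since $\norm{\bar X(t)}\le\sup_{s\ge 0}\norm{X(s)}$ we get $\norm{\chi^{x}_{t}}\le\Ex[\sup_{s\ge 0}\norm{X(s)}\mid X(0)=x]$ uniformly in $t$, so accumulation points exist. Let $\bar\chi$ be one and pick $t_{k}\to\infty$ with $\chi^{x}_{t_{k}}\to\bar\chi$. By continuity of the inner product, for every $(z,z^{\ast})\in\gr(\opA+\NC_{C})$,
\[
\inner{z^{\ast},z-\bar\chi}=-\lim_{k\to\infty}\inner{\chi^{x}_{t_{k}}-z,z^{\ast}}\ge -\limsup_{t\to\infty}\inner{\chi^{x}_{t}-z,z^{\ast}}\ge 0,
\]
so $(\bar\chi,0)$ is monotonically related to every point of $\gr(\opA+\NC_{C})$. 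Since $\opA+\NC_{C}$ is maximally monotone (Assumption~\ref{ass:OpMonotone}), the equivalence \eqref{eq:eqi} forces $\bar\chi\in\zer(\opA+\NC_{C})=\scrS$, which is the claim.

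The part that needs care is the passage to the limit: one must keep $U(t)$ in the anchor-free form built in the proof of Theorem~\ref{th:ergodic2}, so that conditioning on $X(0)=x$ annihilates it uniformly in $(z,z^{\ast})$, and one genuinely needs the vanishing-variance regime \eqref{eq:ASV} rather than merely \eqref{eq:UBV} to keep $c(t)$ bounded as $t\to\infty$; the rest is bookkeeping plus Minty's lemma. Going instead through $\Theta_{\delta}$, Jensen's inequality and Lemma~\ref{lem:gap} would additionally require showing $\Psi(\chi^{x}_{t})\to 0$ to place $\bar\chi$ in $C$ and establishing $\norm{\bar\chi-a}<\delta$, complications the direct route sidesteps.
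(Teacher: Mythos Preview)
Your proof is correct and follows exactly the same route as the paper: take conditional expectation in \eqref{eq:MainInner}, use that $U$ is a mean-zero martingale to remove it, let $t\to\infty$ along a subsequence, and close with the Minty characterization \eqref{eq:eqi} of the maximally monotone operator $\opA+\NC_{C}$. Your write-up is in fact more careful than the paper's, since you make explicit the boundedness of $(\chi^{x}_{t})_{t}$ via Proposition~\ref{prop:1}(i) and the role of \eqref{eq:ASV} in keeping $c(t)$ bounded, both of which the paper leaves implicit.
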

\begin{proof}
Starting from eq. \eqref{eq:MainInner}, for arbitrary $(z,z^{*})\in\gr(\opA+\NC_{C})$, we immediately see
\begin{align*}
\Ex[\inner{\bar{X}(t)-z,z^{\ast}}\vert X(0)=x]=\inner{\Ex[\bar{X}(t)\vert X(0)=x]-z,z^{\ast}}\leq \frac{1}{t}\int_{0}^{t}\beta(s)h_{C}(p/\beta(s))\dif s+\frac{2}{t}[E_{z}(0)+c(t)].
\end{align*}
We conclude that the deterministic process $\chi^{x}_{t}=\Ex[\bar{X}(t)\vert X(0)=x]$ satisfies the limit relation 
$$
\lim_{n\to\infty}\inner{\chi^{x}_{t_{n}}-z,z^{\ast}}=\inner{\chi^{x}_{\infty}-z,z^{\ast}}\leq 0. 
$$
This inequality being true for any $(z,z^{\ast})\in\gr(\opA+\NC_{C})$. By maximal monotonicity of the operator $\opA+\NC_{C}$, we obtain $0\in(\opA+\NC_{C})(\chi^{x}_{\infty})$, that is $\chi^{x}_{\infty}\in\zer(\opA+\NC_{C})$ (cf. \eqref{eq:eqi}). 
\end{proof}


\section{Subgradient evolution equations}
\label{sec:subgradient}
%

We now add to our analysis the fact that the maximally monotone operator $\opA$ is a subdifferential of a closed convex and proper function $\Phi:\R^{d}\to(-\infty,\infty].$ In this special setting, our definition of a solution to the stochastic differential inclusion \eqref{eq:SDI} particularizes to the following:


\begin{definition}\label{def:SDISubgradient}
Let $\xi\in L^{2}(\Omega;\cl(\dom(\Phi)))$ be a given random variable. A process $X=\{(X(t),\scrF_{t}),t\in [0,T]\}$ is a strong solution to \eqref{eq:SDIMMO} if there exists a process $K=\{(K(t),\scrF_{t}),t\in [0,T]\}$ such that conditions (SE.1)-(SE.3) holds, while (SE.4) is replaced by 
\begin{enumerate}
\item[(SE.4')] For all $z\in\R^{d}$, and $0\leq r\leq s\leq T$, we have 
\begin{equation}\label{eq:Ksubgradient}
\int_{r}^{s}\inner{z-X(u),\dif K(u)}+\int_{r}^{s}\Phi(X(u))\dif u\leq (s-r)\Phi(z).
\end{equation}
\end{enumerate}
\end{definition}
Note that condition (SE.4') is equivalent to 
\begin{equation}\label{eq:KSubgradient}
\int_{r}^{s}\inner{X(u)-z,\dif K(u)-\hat{z}\dif u}\geq 0 \qquad\Pr-\text{a.s.}\quad\forall (z,\hat{z})\in\gr(\partial\Phi),\forall 0\leq r\leq s\leq T.
\end{equation}
In this sense, this condition can be written symbolically as 
$$
\dif K(t)\in\partial\Phi(X(t))(\dif t)\qquad (\omega,t)\in\Omega\times[0,T].
$$

\begin{lemma}\label{lem:bound1}
Let Assumptions \ref{ass:OpMonotone}-\ref{ass:Noisebound}\eqref{eq:ASV} be in place. Then, for every $z\in\scrS$ and $p\in\NC_{C}(z)$ so that $-p\in\partial\Phi(z)$, we have 
\begin{itemize}
\item[(i)] $\Ex\left[\int_{0}^{t}\left(\beta(s)\Psi(X(s))+\Phi(X(s))-\Phi(z)\right)\dif s\right]$ converges in $\R$
\item[(ii)] $\Ex\left[\int_{0}^{t}\inner{-p,X(s)-z}\dif s\right]$ converges in $\R$ 
\end{itemize}
\end{lemma}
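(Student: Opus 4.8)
The plan is to rerun the Lyapunov computation from the proof of Proposition~\ref{prop:1}, but now exploiting the sharper subgradient constraint \eqref{eq:Ksubgradient} on the reflection process $K$ so as to keep the potential $\Phi$ explicit, rather than discarding it through the weaker operator inequality. Fix $z\in\scrS$ and $p\in\NC_C(z)$ with $-p\in\partial\Phi(z)$. Applying It\^o's formula as in \eqref{eq:ItoOpen} to $\ce_z(X(t))=\tfrac12\norm{X(t)-z}^2$, using \eqref{eq:Ksubgradient} with $r=0$, $s=t$ to bound $-\int_0^t\inner{X(s)-z,\dif K(s)}\leq t\,\Phi(z)-\int_0^t\Phi(X(s))\dif s$, and invoking $\inner{\nabla\Psi(X(s)),X(s)-z}\geq\Psi(X(s))$ (convexity of $\Psi$ with $\Psi(z)=0$), one arrives at the energy estimate
\begin{equation}\label{eq:lem-basic}
E_z(t)+\int_0^t\big[\beta(s)\Psi(X(s))+\Phi(X(s))-\Phi(z)\big]\dif s\ \leq\ E_z(0)+M_z(t)+c(t),
\end{equation}
where $M_z$ and $c$ are the martingale and bounded-variation remainder appearing in the proof of Proposition~\ref{prop:1}; here $M_z$ is a true martingale by the $\bS^2$ bounds. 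Taking expectations, dropping $E_z(t)\geq0$, and using $\Ex[M_z(t)]=0$ together with $\Ex[c(t)]\leq\tfrac12\int_0^\infty\Sigma(s)^2\dif s<\infty$ under \eqref{eq:ASV}, shows that $a(t)\eqdef\Ex\big[\int_0^t(\beta(s)\Psi(X(s))+\Phi(X(s))-\Phi(z))\dif s\big]$ is bounded above uniformly in $t$.

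To upgrade this uniform bound to convergence I would produce a matching pointwise lower bound on the same integrand. Since $p\in\NC_C(z)$ we have $\support_C(p)=\inner{p,z}$, so the Fenchel--Young inequality for $\Psi$ at $X(s)$ with slope $p/\beta(s)$ gives $\inner{p,X(s)-z}\leq\beta(s)h_C(p/\beta(s))+\beta(s)\Psi(X(s))$, and combined with the subgradient inequality $\Phi(X(s))-\Phi(z)\geq\inner{-p,X(s)-z}$ this yields
$$
\beta(s)\Psi(X(s))+\Phi(X(s))-\Phi(z)\ \geq\ -\beta(s)h_C(p/\beta(s)).
$$
Writing $b(t)\eqdef\int_0^t\beta(s)h_C(p/\beta(s))\dif s$, which is nondecreasing and finite in the limit by Assumption~\ref{ass:CZ}, the integrand defining $a(t)+b(t)$ is nonnegative---which simultaneously legitimises the Tonelli interchange and the finiteness of each $a(t)$---so $t\mapsto a(t)+b(t)$ is nondecreasing and bounded above, hence convergent; since $b(t)$ converges, so does $a(t)$, which is part~(i).

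For part~(ii) I would run the same nonnegative-integrand scheme on $\psi(s)\eqdef\inner{-p,X(s)-z}$. The Fenchel--Young estimate reads $\psi(s)+\beta(s)h_C(p/\beta(s))+\beta(s)\Psi(X(s))\geq0$, while the subgradient inequality gives $\psi(s)\leq\Phi(X(s))-\Phi(z)$, so that $\Ex\big[\int_0^t\psi(s)\dif s\big]\leq a(t)-d(t)$ with $d(t)\eqdef\Ex\big[\int_0^t\beta(s)\Psi(X(s))\dif s\big]$. The expectation bound obtained inside the proof of Proposition~\ref{prop:1}(iii) shows that $d(\cdot)$ is nondecreasing and bounded, hence convergent. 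Consequently $\Ex\big[\int_0^t\psi(s)\dif s\big]+b(t)+d(t)$ is nondecreasing (nonnegative integrand) and bounded above by $a(t)+b(t)$ (which converges), hence convergent; subtracting the convergent terms $b(t)$ and $d(t)$ gives convergence of $\Ex\big[\int_0^t\inner{-p,X(s)-z}\dif s\big]$, proving part~(ii).

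The only genuine difficulty is arranging this two-sided control: neither $\beta(s)\Psi(X(s))+\Phi(X(s))-\Phi(z)$ nor $\inner{-p,X(s)-z}$ is sign-definite along the trajectory, so the uniform upper bound from \eqref{eq:lem-basic} alone does not yield convergence. The device is that after adding the Attouch--Czarnecki mass $\beta(s)h_C(p/\beta(s))$ (summable by Assumption~\ref{ass:CZ}) and, for (ii), the curvature mass $\beta(s)\Psi(X(s))$ (summable by Proposition~\ref{prop:1}(iii)), each integrand becomes nonnegative and one is reduced to the elementary fact that a nondecreasing, bounded-above real function of $t$ converges; the remaining Fubini/Tonelli and finiteness bookkeeping is then routine given the $\bS^p$ estimates of Proposition~\ref{prop:1}(i).
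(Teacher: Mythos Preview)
Your proof is correct and follows essentially the same route as the paper: the same It\^o energy estimate \eqref{eq:subgrad1} for the upper bound, the same Fenchel--Young plus subgradient combination for the pointwise lower bound, and the same appeal to Proposition~\ref{prop:1}(iii) to handle the $\beta(s)\Psi(X(s))$ mass in part~(ii). Your version is actually a bit more careful on one point: the paper sandwiches the quantity of interest between two convergent expressions and then claims convergence, whereas you make explicit the monotonicity device---adding the summable compensator $b(t)$ (and, for~(ii), also $d(t)$) so that the resulting integrand is nonnegative and the map $t\mapsto a(t)+b(t)$ is nondecreasing and bounded above---which is what is genuinely needed to upgrade boundedness to convergence.
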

\begin{proof}
Consider the function $E_{z}(t)=\frac{1}{2}\norm{X(t)-z}^{2}$ for $z\in\scrS$. It\^{o}'s formula yields 
\begin{equation}\label{eq:subgrad1}
E_{z}(t) +\int_{0}^{t}\beta(r)\Psi(X(r))\dif r+\int_{0}^{t}[\Phi(X(r))-\Phi(z)]\dif r\leq E_{z}(0)+M_{z}(t)+c(t),
\end{equation}
where $M_{z}(t)\eqdef\int_{0}^{t}\inner{X(r)-z,\sigma(r,X(r))\dif W(r)}$ and $c(t)\eqdef \frac{1}{2}\int_{0}^{t}\norm{\sigma(r,X(r))}^{2}_{\rm F}\dif r$. Taking expectations on both sides, and calling $\Gamma(t)\eqdef\Ex[E_{z}(0)+c(t)]$, we obtain the upper bound 
\begin{equation}\label{eq:upperbound}
\Ex\left[E_{z}(t)+\int_{0}^{t}\beta(r)\Psi(X(r))\dif r+\int_{0}^{t}[\Phi(X(r))-\Phi(z)]\dif r\right]\leq\Gamma(t)\qquad\forall t\geq 0.
\end{equation}
Using Assumption \ref{ass:Noisebound}\eqref{eq:ASV}, it readily follows 
$$
\Gamma(t)\leq \Gamma_{\infty}\eqdef \Ex[ E_{z}(0)+\int_{0}^{\infty}\Sigma^{2}(r)\dif r]<\infty\qquad \forall t\geq 0.
$$
Now pick $z\in\scrS$, so that there exists $p\in\NC_{C}(z)$ with $-p\in\partial\Phi(z)$. Hence, the convex sub-gradient inequality yields
\begin{equation}\label{eq:subgradientPhi}
\Phi(X(t))-\Phi(z)\geq \inner{-p,X(t)-z}\qquad \text{a.s. },\forall t\geq 0.
\end{equation}
This yields the relation 
\begin{align*}
E_{z}(t) +\int_{0}^{t}\beta(r)\Psi(X(r))\dif r+&\int_{0}^{t}\inner{-p,X(r)-z}\dif r\\
&\leq E_{z}(t) +\int_{0}^{t}\beta(r)\Psi(X(r))\dif r+\int_{0}^{t}[\Phi(X(r))-\Phi(z)]\dif r.
\end{align*}
Define 
\begin{align}
    g(t)&\eqdef \beta(t)\Psi(X(t))+\Phi(X(t))-\Phi(z),\\
    j(t)&\eqdef \beta(t)\Psi(X(t))+\inner{-p,X(t)-z},
\end{align}
so that $g(t)\geq j(t).$ By Fenchel-Young, we obtain 
\[
j(t)\geq -\beta(t)\left[\Psi^{*}\left(\frac{p}{\beta(t)}\right)-\support_{C}\left(\frac{p}{\beta(t)}\right)\right]=-\beta(t)h_{C}\left(\frac{p}{\beta(t)}\right).
\]
Setting 
\begin{equation}\label{eq:thatsa}
a(t)\eqdef \beta(t)h_{C}\left(\frac{p}{\beta(t)}\right),
\end{equation}
we obtain the relation
$$
g(t)\geq j(t)\geq-a(t).
$$
The Attouch-Czarnecki condition gives $\int_{0}^{\infty}a(t)\dif t<\infty.$ Furthermore, eq. \eqref{eq:upperbound} yields 
$$
\Ex[\int_{0}^{t}g(r)\dif r]\leq \Gamma(t)\leq\Gamma_{\infty}. 
$$
Partition the function $g$ into its positive and negative part $g(t)=g^{+}(t)+g^{-}(t)$. If $g^{+}(t)=0$, then $g^{-}(t)\geq 0$, and thus $-a(t)\leq g(t)=-g^{-}(t)$. If $g^{+}(t)>0$, then $g^{-}(t)=0$, and $-a(t)\leq 0=-g^{-}(t)$. Together, we conclude $g^{-}(t)\leq a(t)$ for all $t\geq 0$. Since both functions are non-negative, we conclude further 
$$
\Ex\left[\int_{0}^{t}g^{-}(r)\dif r\right]\leq\int_{0}^{t}a(r)\dif r\qquad\forall t\geq 0. 
$$
Hence, the positive and increasing function $t\mapsto \int_{0}^{t}\Ex[g^{-}(r)]\dif r$ has a limit. Since as well 
\[
\Ex\left[\int_{0}^{t}g^{+}(r)\dif r\right]=\Ex\left[\int_{0}^{t}g(r)\dif r\right]+\Ex\left[\int_{0}^{t}g^{-}(r)\dif r\right]\leq \Gamma_{\infty}+\int_{0}^{\infty}a(t)\dif t
\]
the positive and increasing function $t\mapsto\Ex\left[\int_{0}^{t}g^{+}(r)\dif r\right]$ also has a limit. Hence, 
$$
t\mapsto \Ex\left[\int_{0}^{t}g(r)\dif r\right]=\Ex\left[\int_{0}^{t}g^{+}(r)\dif r\right]-\Ex\left[\int_{0}^{t}g^{-}(r)\dif r\right]
$$
has a limit as the difference of two converging integrals. 

For statement (ii), first recall the already established relation $j(t)\geq-a(t)$, which implies $j^{-}(t)\leq a(t)$. Hence, by Attouch-Czarnecki, 
$$
\Ex\left[\int_{0}^{\infty}j^{-}(r)\dif r\right]\leq \int_{0}^{\infty}a(r)\dif r<\infty. 
$$
To bound the positive part, we essentially repeat the previous argument. Indeed, 
$$
\Ex\left[\int_{0}^{t}j^{+}(r)\dif r\right]=\Ex\left[\int_{0}^{t}j(r)\dif r\right]+\Ex\left[\int_{0}^{t}j^{-}(r)\dif r\right]\leq \Gamma_{\infty}+\int_{0}^{\infty}a(r)\dif r<\infty. 
$$
Hence, 
$$
t\mapsto\int_{0}^{t}\Ex[j(r)]\dif r=
\Ex\left[\int_{0}^{t}j^{+}(r)\dif r\right]-\Ex\left[\int_{0}^{t}j^{-}(r)\dif r\right]
$$
converges to a finite limit. Now write 
\begin{align*}
    \Ex\left[\int_{0}^{t}\inner{-p,X(r)-z}\dif r\right]&=\Ex\left[\int_{0}^{t}j(r)\dif r\right]-\Ex\left[\int_{0}^{t}\beta(r)\Psi(X(r))\dif r\right]
\end{align*}
In combination with Proposition \ref{prop:1}(iii), we conclude that $\lim_{t\to\infty}\Ex\left[\int_{0}^{t}\inner{-p,X(r)-z}\dif r\right]$ exists in $\R$, which demonstrates (ii).
\end{proof}

\begin{lemma}\label{lem:ASlimits}
Let Assumptions \ref{ass:OpMonotone}-\ref{ass:Noisebound}\eqref{eq:ASV} be in place. For every $z\in\scrS=\argmin_{x\in C}\Phi(x)=\zer(\partial\Phi+\NC_{C})$ and $p\in\NC_{C}(z)$ with $-p\in\partial\Phi(z)$, it holds true that
\begin{itemize}
\item[(i)] $\liminf_{t\to\infty}\inner{-p,X(t)-z}\leq 0\quad\Pr$-a.s.
\item[(ii)] $\liminf_{t\to\infty}\abs{\Phi(X(t))-\Phi(z)+\beta(t)\Psi(X(t)}= 0\quad\Pr$-a.s. 
\end{itemize}
\end{lemma}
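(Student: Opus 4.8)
The plan is to read both statements off an almost-sure refinement of the energy estimate behind \eqref{eq:subgrad1}. Applying It\^o's formula to $E_z(t)=\tfrac12\norm{X(t)-z}^2$ and then splitting off, on one side, the convexity slack of $\Psi$ (using $\Psi(z)=0$, so that $\inner{\nabla\Psi(X(r)),X(r)-z}\ge\Psi(X(r))$) and, on the other, the subgradient slack of $K$ (using \eqref{eq:KSubgradient}/(SE.4') with the point $z$, so that $\int_0^t\inner{X(r)-z,\dif K(r)}\ge\int_0^t(\Phi(X(r))-\Phi(z))\dif r$), one obtains the \emph{exact} identity
$$
E_z(t)+\int_0^t\psi(r)\dif r+D(t)=E_z(0)+M_z(t)+c(t),\qquad \psi(r)\eqdef\Phi(X(r))-\Phi(z)+\beta(r)\Psi(X(r)),
$$
where $D$ collects the two slack integrals and is therefore nonnegative and nondecreasing, $M_z(t)=\int_0^t\inner{X(r)-z,\sigma(r,X(r))\dif W(r)}$, and $c(t)$ is the It\^o-correction process appearing in \eqref{eq:subgrad1}.

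Next I would show that, under \eqref{eq:ASV}, every term except $D$ and $\int_0^\cdot\psi$ converges a.s. One has $c(\infty)<\infty$ a.s.; the predictable quadratic variation of $M_z$, namely $\int_0^\infty\norm{\sigma(r,X(r))^{\top}(X(r)-z)}^2\dif r$, is bounded a.s. by $\bigl(\sup_{r\ge0}\norm{X(r)-z}^2\bigr)\int_0^\infty\Sigma(r)^2\dif r<\infty$ thanks to Proposition \ref{prop:1}(i), so $M_z(t)$ converges a.s.; and Proposition \ref{prop:1}(ii) gives that $E_z(t)$ converges a.s. Moreover the Fenchel--Young estimate already used in Lemma \ref{lem:bound1}, together with Assumption \ref{ass:CZ}, yields the uniform lower bound $\int_0^t\psi(r)\dif r\ge-\int_0^\infty\beta(r)h_C(p/\beta(r))\dif r>-\infty$. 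Feeding all of this into the identity forces $D(\infty)<\infty$ a.s. and hence that $\int_0^t\psi(r)\dif r$ converges in $\R$ a.s.

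From here (i) and the easy half of (ii) are immediate. Since $\inner{-p,X(r)-z}\le\Phi(X(r))-\Phi(z)\le\psi(r)$ (the first inequality is \eqref{eq:subgradientPhi}, the second because $\beta(r)\Psi(X(r))\ge0$), the process $f(t)\eqdef\int_0^t\inner{-p,X(r)-z}\dif r$ has $\limsup_{t\to\infty}f(t)<\infty$ a.s.; were $\liminf_{t\to\infty}\inner{-p,X(t)-z}>0$ on a set of positive probability, $f$ would diverge to $+\infty$ there, a contradiction, proving (i). Likewise, convergence of $\int_0^t\psi(r)\dif r$ gives $\liminf_{t\to\infty}\psi(t)\le0$ a.s.

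The remaining task — upgrading this to $\limsup_{t\to\infty}\psi(t)\le0$ a.s. — is, I expect, the main obstacle. The natural route is an excursion argument: on the event $\{\limsup_t\psi(t)=2\eps>0\}$, pick $t_n\uparrow\infty$ with $\psi(t_n)>\eps$ and let $a_n<t_n$ be the last instant with $\psi(a_n)\le\eps/2$, so $\psi>\eps/2$ on $(a_n,t_n]$; the energy identity on $[a_n,t_n]$ gives $\tfrac{\eps}{2}(t_n-a_n)\le\int_{a_n}^{t_n}\psi(r)\dif r\le E_z(a_n)-E_z(t_n)+(M_z(t_n)-M_z(a_n))+(c(t_n)-c(a_n))\to0$, so the excursion lengths $t_n-a_n$ vanish. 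Reaching a contradiction then requires an asymptotic modulus-of-continuity estimate for $t\mapsto\psi(t)$ along the semimartingale $X$: the increment $\psi(t_n)-\psi(a_n)\ge\eps/2$ must be controlled by $\norm{X(t_n)-X(a_n)}$ (via the subgradient inequality for $\Phi$, the local Lipschitz bound on $\nabla\Psi$, and $\Psi(X(t))\to0$) plus the vanishing diffusion contribution, while $\norm{X(t_n)-X(a_n)}\to0$ follows from the shrinking interval and control of the drift-plus-reflection increment, again using the energy identity and $\int_0^\infty\beta(r)\Psi(X(r))\dif r<\infty$ from Proposition \ref{prop:1}(iii). (If one restricts to penalty schedules with $\dot\beta\lesssim\beta/(1+t)$, the argument simplifies: then $\int_0^\infty\dot\beta(r)\Psi(X(r))\dif r<\infty$ a.s., $t\mapsto\psi(t)$ is asymptotically almost nonincreasing, and $\lim_t\psi(t)=\liminf_t\psi(t)\le0$ follows directly.) The first three paragraphs are routine bookkeeping with the It\^o energy identity and Robbins--Siegmund-type reasoning; this last regularity step is where the genuine work lies.
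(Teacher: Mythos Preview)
Your setup—upgrading \eqref{eq:subgrad1} to an almost-sure identity with the nonnegative slack $D$ and then invoking Robbins--Siegmund reasoning to get a.s.\ convergence of $\int_0^\cdot\psi$—differs from the paper's, which stays in expectation throughout and leans on Lemma~\ref{lem:bound1}. Your route makes (i) and the $\liminf$ half of (ii) immediate, and is arguably cleaner than the paper's expectation-based contradiction for (i).

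The genuine gap is in the excursion argument for $\limsup_t\psi(t)\le0$. You correctly deduce $t_n-a_n\to0$ from the energy identity, but the closing step—forcing $\psi(t_n)-\psi(a_n)\to0$—does not follow from the ingredients you list. Two obstructions: (a) $\Phi$ is merely lower semicontinuous, so $\Phi\circ X$ need not be continuous and $\Phi(X(t_n))-\Phi(X(a_n))$ is not controlled by $\norm{X(t_n)-X(a_n)}$ without further hypotheses; (b) even granting $\norm{X(t_n)-X(a_n)}\to0$, the penalty increment $\beta(t_n)\Psi(X(t_n))-\beta(a_n)\Psi(X(a_n))$ carries the unbounded factor $\beta$ and does not obviously vanish. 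Your proposed control of $\norm{X(t_n)-X(a_n)}$ itself is incomplete: the drift $\int_{a_n}^{t_n}\beta(r)\nabla\Psi(X(r))\dif r$ and the bounded-variation increment $K(t_n)-K(a_n)$ are not bounded by $\int_0^\infty\beta(r)\Psi(X(r))\dif r<\infty$ alone, and the alternative via $\dot\beta\lesssim\beta/(1+t)$ does not by itself yield the claimed asymptotic near-monotonicity of $\psi$. The paper takes a different tack for (ii): it never attempts a pathwise modulus-of-continuity estimate, but combines the upper bound $\Ex[\int_0^T(L(t)-\Phi(z))\dif t]\le\Gamma(T)$ with the pointwise lower bound $L(t)-\Phi(z)\ge-\beta(t)h_C(p/\beta(t))$ (integrable by Assumption~\ref{ass:CZ}) and runs the contradiction in expectation on the set where $\limsup\ge\delta$, thereby sidestepping any regularity of $t\mapsto\psi(t)$.
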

\begin{proof}
(i) To simplify the notation, denote by $q(t,\omega)\eqdef\inner{-p,X(t,\omega)-z}.$ Envoking the definitions introduced in the proof of Lemma \ref{lem:bound1}, we then have $j(t)=\beta(t)\Psi(X(t))+q(t)$. For the sake of obtaining a contradiction, suppose that 
\[
\Pr\left(\liminf_{t\to\infty}q(t)>0\right)>0.
\]
Since, 
\[
\left\{\liminf_{t\to\infty}q(t)>0\right\}=\bigcup_{m,k\geq 1}\left\{q(t)\geq\frac{1}{m}\quad\forall t\geq k\right\},
\]
there exists $\delta>0,T_{0}>0$ and a measurable set $\Omega_{0}\subset\Omega$ such that 
$$
\Pr(\Omega_{0})>0\text{ and }q(t,\omega)\geq\delta\quad\forall t\geq T_{0},\forall\omega\in\Omega_{0}.
$$
We claim 
\[
q^{-}(t)\leq a(t)+\beta(t)\Psi(X(t))\qquad\forall t\geq 0,
\]
where $a:\R_{\geq 0}\to\R_{\geq 0}$ is defined in \eqref{eq:thatsa}. To wit, if $q^{-}(t)=0$, the claim is clearly true. If $q^{-}(t)>0$, then $q^{+}(t)=0$, and thus 
$$
-q^{-}(t)=q(t)\geq j(t)=\beta(t)\Psi(X(t))+q(t)\geq-a(t).
$$
Departing from here, we can conclude 
\[
\Ex\left[\int_{0}^{t}q^{-}(r)\dif r\right]\leq \int_{0}^{t}a(r)\dif r+\Ex\left[\int_{0}^{t}\beta(r)\Psi(X(r))\dif r\right].
\]
In conjunction with Proposition \ref{prop:1}(iii), we thus obtain 
\[
\Ex[\int_{0}^{\infty}q^{-}(t)\dif t]<\infty.
\]
We continue by observing that for all $T>T_{0}$
\begin{align*}
    \Ex\left[\int_{0}^{T}q(t)\dif t\right]&=\Ex\left[\int_{0}^{T}q^{+}(t)\dif t\right]-\Ex\left[\int_{0}^{T}q^{-}(t)\dif t\right]\\
    &\geq \int_{T_{0}}^{T}\Ex\left[\mathds{1}_{\Omega_{0}}q(t)\dif t\right]-\Ex\left[\int_{0}^{T}q^{-}(t)\dif t\right]\\
    &\geq \delta\Pr(\Omega_{0})(T-T_{0})-\Ex\left[\int_{0}^{T}q^{-}(t)\dif t\right].
\end{align*}
As $T\uparrow\infty$ we would then have $\Ex[\int_{0}^{\infty}q(t)\dif t]=\infty$, a contradiction to Lemma \ref{lem:bound1}(i).

(ii) Reconsider the function $t\mapsto g(t)=\Phi(X(t))-\Phi(z)+\beta(t)\Psi(X(t))$, where $z\in\scrS$. Since $\abs{g}=g^{+}+g^{-}$, it follows from Lemma \ref{lem:bound1}(i) that $\int_{0}^{\infty}\abs{g(t)}\dif t<\infty$ a.s. 

\end{proof}

\begin{lemma}\label{lem:Upper}
Let Assumptions \ref{ass:OpMonotone}-\ref{ass:Noisebound}\eqref{eq:ASV} be in place. For every $z\in\scrS=\argmin_{C}\Phi=\zer(\partial\Phi+\NC_{C})$ and $p\in\NC_{C}(z)$ such that $-p\in\partial\Phi(z)$, it holds true that
$$
\liminf_{t\to\infty}\inner{-p,X(t)-z}=0\quad \Pr-\text{a.s.}
$$
and 
$$
\liminf_{t\to\infty}\Phi(X(t))\geq \Phi(z)=\min_{C}\Phi  \quad \Pr-\text{a.s.}
$$
\end{lemma}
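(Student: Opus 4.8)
The plan is to reduce both assertions to the single geometric fact that the trajectory asymptotically reaches the feasible set, namely $\dist(X(t),C)\to 0$ $\Pr$-a.s., and then to combine this with the defining inequality of the normal cone $\NC_{C}(z)$, the subgradient inequality for $\partial\Phi$, and the one-sided bound $\liminf_{t\to\infty}\inner{-p,X(t)-z}\le 0$ already furnished by Lemma~\ref{lem:ASlimits}(i).

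First I would establish $\dist(X(t),C)\to 0$ $\Pr$-a.s. Under \eqref{eq:ASV}, Proposition~\ref{prop:1}(i) gives $\Ex[\sup_{t\ge 0}\norm{X(t)}^{2}]<\infty$, hence $\sup_{t\ge 0}\norm{X(t)}<\infty$ $\Pr$-a.s., and Proposition~\ref{prop:1}(iii) gives $\Psi(X(t))\to 0$ $\Pr$-a.s. On the intersection of these full-measure events, if $\dist(X(t),C)$ did not tend to $0$ we could pick $\eps>0$ and $t_{n}\uparrow\infty$ with $\dist(X(t_{n}),C)\ge\eps$; boundedness lets us extract $X(t_{n_{k}})\to x^{\ast}$, and continuity of $\Psi$ together with $C=\Psi^{-1}(0)$ (Assumption~\ref{ass:Psi}) forces $\Psi(x^{\ast})=0$, i.e.\ $x^{\ast}\in C$, contradicting $\dist(x^{\ast},C)\ge\eps$.

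Granting this, fix $z\in\scrS$ and $p\in\NC_{C}(z)$, and write $P_{C}$ for the metric projection onto $C$. Decomposing
$$
\inner{-p,X(t)-z}=\inner{-p,\,X(t)-P_{C}(X(t))}+\inner{-p,\,P_{C}(X(t))-z}\ \ge\ -\norm{p}\,\dist(X(t),C),
$$
since the second inner product is $\ge 0$ (as $P_{C}(X(t))\in C$ and $p\in\NC_{C}(z)$) and the first is bounded below by Cauchy--Schwarz, I would let $t\to\infty$ and use the previous step to get $\liminf_{t\to\infty}\inner{-p,X(t)-z}\ge 0$; together with Lemma~\ref{lem:ASlimits}(i) this yields the first identity. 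For the objective bound, since $z\in\zer(\partial\Phi+\NC_{C})$ there is $q\in\NC_{C}(z)$ with $-q\in\partial\Phi(z)$, and the subgradient inequality combined with the same decomposition applied to $q$ gives $\Phi(X(t))\ge\Phi(z)+\inner{-q,X(t)-z}\ge\Phi(z)-\norm{q}\,\dist(X(t),C)$; passing to the $\liminf$ produces $\liminf_{t\to\infty}\Phi(X(t))\ge\Phi(z)=\min_{C}\Phi$.

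The only step demanding genuine care is the feasibility claim $\dist(X(t),C)\to 0$: since $\Psi$ is not assumed to satisfy a Hölderian growth condition, $\Psi(X(t))\to 0$ alone does not control $\dist(X(t),C)$, and one really needs the almost sure boundedness of the sample paths, which Proposition~\ref{prop:1}(i) provides precisely under the vanishing-noise regime \eqref{eq:ASV}. Everything after that is a one-line manipulation of the normal-cone and subgradient inequalities.
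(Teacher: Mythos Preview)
Your proof is correct and rests on the same two pillars as the paper's: almost sure boundedness of the sample paths (Proposition~\ref{prop:1}(i) under \eqref{eq:ASV}) together with $\Psi(X(t))\to 0$ a.s.\ (Proposition~\ref{prop:1}(iii)) to force asymptotic feasibility, followed by the normal-cone and subgradient inequalities. The packaging differs slightly: the paper fixes a subsequence $t_{n}(\omega)$ along which $\langle -p,X(t_{n})-z\rangle$ realizes its $\liminf$, extracts a convergent subsubsequence with limit $X_{\infty}(\omega)\in C$, and reads off $\langle -p,X_{\infty}-z\rangle\ge 0$; you instead upgrade the ingredients to the global statement $\dist(X(t),C)\to 0$ and use the projection decomposition $\langle -p,X(t)-z\rangle\ge -\norm{p}\,\dist(X(t),C)$. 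Your route has the advantage of producing a uniform quantitative lower bound valid for all $t$ (not just along the minimizing subsequence), and it makes the second assertion a one-liner via $\Phi(X(t))\ge\Phi(z)-\norm{q}\,\dist(X(t),C)$, whereas the paper appeals somewhat loosely to lower semi-continuity of $\Phi$. Either way the substance is the same.
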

\begin{proof}
Proposition \ref{prop:1} established the almost sure boundedness of $X$  in $L^{p}(\Omega;\R^{d})$. Combining Lemma \ref{lem:bound1}(iii) with Lemma \ref{lem:ASlimits}(ii), the nonnegative function 
$$
t\mapsto \abs{g(t)}+\beta(t)\Psi(X(t))
$$
is integrable, implying $\liminf_{t\to\infty}\left(\abs{g(t)}+\beta(t)\Psi(X(t))\right)=0$ a.s. Hence, for a.e. $\omega\in\Omega$ we can choose a sequence $t_{n}=t_{n}(\omega)\to\infty$ such that 
\begin{equation}\label{eq:limitzero}
g(t_{n})\to 0\text{ and }\beta(t_{n})\Psi(X(t_{n}))\to 0.
\end{equation}

Set $x_{n}(\omega)\eqdef X(t_{n}(\omega),\omega)\to X_{\infty}(\omega)$. It is easy to see that 
$$
\Psi(x_{n}(\omega))\leq\frac{\beta(t)}{\beta(0)}\Psi(x_{n}(\omega))\to 0. 
$$
Hence, by continuity $X_{\infty}(\omega)\in\Psi^{-1}(0)=C$. Together with $p\in\NC_{C}(z)$, we obtain
$$
\lim_{n\to\infty}\inner{-p,x_{n}(\omega)-z}=\inner{-p,X_{\infty}(\omega)-z}\geq 0.
$$
Combined with Lemma \ref{lem:ASlimits}(i), we conclude $\lim\inf_{t\to\infty}\inner{-p,X(t)-z}=0$ on $\bar{\Omega}$.

Finally, from eq. \eqref{eq:subgradientPhi} we immediately conclude (since $-p\in\partial\Phi(z)$) from the lower semi-continuity of the function $\Phi$ 
$$
\liminf_{t\to\infty}\Phi(X(t))\geq \Phi(z)=\min_{C}\Phi.
$$
\end{proof}
The main theorem of this section is the following asymptotic convergence result in terms of feasibility and optimality of the stochastic process $X$. 
\begin{theorem}\label{th:2}
Let Assumptions \ref{ass:OpMonotone}-\ref{ass:Noisebound}\eqref{eq:ASV} be in place. Then, the following statements hold true:
\begin{itemize} 
\item[(a)] All limit points of the trajectory $X$ are $\Pr$-a.s. contained in $\scrS=\argmin_{C}(\Phi)$.
\item[(b)] $\lim_{t\to\infty}\Psi(X(t))=0$ and if $\Phi$ is continuous, then $\Phi(X(t))\to\min_{C}\Phi$ a.s. 
\end{itemize}
\end{theorem}
\begin{proof}
(a) Fix $z\in\scrS$ arbitrary. For almost every $\omega\in\Omega$, we can choose a sequence $t_{n}=t_{n}(\omega)$ with $t_{n}\to\infty$ such that 
\[
\abs{g(t_{n})}\to 0\text{ and }\beta(t_{n})\Psi(X(t_{n}))\to 0.
\]
It follows, 
\[
\abs{\Phi(X(t_{n}))-\Phi(z)}\leq \abs{g(t_{n})}+\beta(t_{n})\Psi(x_{n}(\omega))\to 0. 
\]
Let $X_{\infty}(\omega)$ denote the limit point of the converging subsequence $x_{n}(\omega)$. Lemma \ref{lem:ASlimits} established that $X_{\infty}(\omega)\in C$. However, we also note that 
$$
\abs{\Phi(x_{n}(\omega))-\Phi(z)}\leq \abs{g(t_{n})}+\beta(t_{n})\Psi)(x_{n}(\omega))\to 0 
$$
Lemma \ref{lem:ASlimits}(ii) states that
$$
\liminf_{t\to\infty}\Phi(X(t))\leq\Phi(z)\qquad\text{a.s.} 
$$
Hence, 
\[
\min_{C}\Phi=\Phi(z)=\lim_{n\to\infty}\Phi(x_{n}(\omega))\geq \Phi(X_{\infty}(\omega))\geq \min_{C}\Phi.
\]
We conclude $X_{\infty}(\omega)\in\scrS$.  Proposition \ref{prop:1} established that $t\mapsto\norm{X(t)-z}$ has a limit for every $z\in\scrS$. The claim now follows from part (a) and Proposition \ref{prop:CP}. 

Lemma \ref{lem:Upper} states $\liminf_{t\to\infty}\Phi(X(t))\geq \Phi(z)$. We deduce $\lim_{t\to\infty}\Phi(X(t))=\Phi(z)$ almost surely.\\
(b) This is clear.
\end{proof}
 We complete the asymptotic analysis of the stochastic subgradient flow, by investigating the long-run properties of C\'{e}saro averages of the trajectories. 
 
\begin{proposition}\label{prop:ergodic1}
Let Assumptions \ref{ass:OpMonotone}-\ref{ass:Noisebound}\eqref{eq:ASV} be in place. Define 
$$
\bar{X}(t)\eqdef\Avg(X;0,t)=\frac{1}{t}\int_{0}^{t}X(s)\dif s.
$$
We have 
\begin{equation}\label{eq:ExErgodic}
\Ex[\Phi(\bar{X}(t))-\min_{C}\Phi\vert X(0)=x]\leq \frac{\dist(x,\scrS)^{2}}{2t}+\frac{1}{2t}\int_{0}^{t}\Sigma^{2}(s)\dif s=O(1/t).
\end{equation}
\end{proposition}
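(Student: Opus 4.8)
The plan is to reuse the energy estimate behind Lemma~\ref{lem:bound1}, but now retain the objective‑gap term, take expectations, and average in time. Fix any $z\in\scrS=\argmin_{C}\Phi$, so that $\Phi(z)=\min_{C}\Phi$. It\^o's formula applied to $E_{z}(t)=\tfrac12\norm{X(t)-z}^{2}$, together with the subgradient property (SE.4') of $K$ and the convex gradient inequality $\inner{\nabla\Psi(X(r)),X(r)-z}\geq\Psi(X(r))$ (using $z\in C=\argmin\Psi$), yields precisely inequality~\eqref{eq:subgrad1}:
$$
E_{z}(t)+\int_{0}^{t}\beta(r)\Psi(X(r))\dif r+\int_{0}^{t}\big[\Phi(X(r))-\Phi(z)\big]\dif r\;\leq\; E_{z}(0)+M_{z}(t)+c(t),
$$
where $M_{z}(t)=\int_{0}^{t}\inner{X(r)-z,\sigma(r,X(r))\dif W(r)}$ is the stochastic integral and $c(t)$ is the quadratic‑variation correction, which under Assumption~\ref{ass:Noisebound}\eqref{eq:ASV} satisfies $c(t)\leq\tfrac12\int_{0}^{t}\Sigma^{2}(r)\dif r$.

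Next I take the conditional expectation given $X(0)=x$. By Proposition~\ref{prop:1}(i) under \eqref{eq:ASV} we have $\Ex[\sup_{t\geq0}\norm{X(t)}^{2}]<\infty$, and since $\Sigma\in L^{2}([0,\infty))$ the integrand defining $M_{z}$ is square‑integrable on $[0,t]$; hence $M_{z}$ is a genuine martingale and $\Ex[M_{z}(t)\mid X(0)=x]=0$. Dropping the nonnegative quantities $E_{z}(t)\geq0$ and $\int_{0}^{t}\beta(r)\Psi(X(r))\dif r\geq0$, using the deterministic initial datum so that $E_{z}(0)=\tfrac12\norm{x-z}^{2}$, and invoking Fubini--Tonelli (legitimate because $r\mapsto\Phi(X(r))$ is integrable in expectation by the sandwich in Lemma~\ref{lem:bound1}), I obtain
$$
\Ex\!\left[\int_{0}^{t}\big(\Phi(X(r))-\Phi(z)\big)\dif r \;\Big|\; X(0)=x\right]\;\leq\;\tfrac12\norm{x-z}^{2}+\tfrac12\int_{0}^{t}\Sigma^{2}(r)\dif r.
$$

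Finally I divide by $t$ and apply Jensen's inequality to the convex lower semicontinuous $\Phi$: since $\bar X(t)$ is the (probability‑)average of points of $\cl(\dom\Phi)$ and $r\mapsto\Phi(X(r))$ is a.s.\ integrable on $[0,t]$, one has $\Phi(\bar X(t))\leq\tfrac1t\int_{0}^{t}\Phi(X(r))\dif r$. Taking expectations, combining with the previous bound, and minimizing the right‑hand side over $z\in\scrS$ — the minimum being attained at the projection of $x$ onto the closed convex set $\scrS=\zer(\partial\Phi+\NC_{C})$, giving $\tfrac12\dist(x,\scrS)^{2}$ — produces
$$
\Ex[\Phi(\bar X(t))-\min_{C}\Phi\mid X(0)=x]\;\leq\;\frac{\dist(x,\scrS)^{2}}{2t}+\frac{1}{2t}\int_{0}^{t}\Sigma^{2}(r)\dif r,
$$
and the $O(1/t)$ estimate follows because $\Sigma\in L^{2}([0,\infty))$ makes $\int_{0}^{t}\Sigma^{2}$ uniformly bounded.

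I expect the only genuine obstacle, the algebra otherwise merely replaying computations already present in the paper, to be the careful justification that $M_{z}$ is a true (not merely local) martingale so that its conditional expectation vanishes, together with the attendant use of Fubini to pull the expectation through the time integral; both rest on the uniform $L^{2}$‑boundedness of the trajectory from Proposition~\ref{prop:1}(i) combined with $\Sigma\in L^{2}$. A secondary point is verifying Jensen's inequality for the extended‑real‑valued $\Phi$ along the averaged path, which follows from the a.s.\ finiteness of $\int_{0}^{t}\Phi(X(r))\dif r$.
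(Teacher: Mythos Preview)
Your proof is correct and follows essentially the same route as the paper: derive the energy inequality \eqref{eq:subgrad1} via It\^o's formula together with (SE.4') and the convexity of $\Psi$, drop the nonnegative terms $E_{z}(t)$ and $\int_{0}^{t}\beta(r)\Psi(X(r))\dif r$, take expectations to annihilate $M_{z}$, divide by $t$, and apply Jensen's inequality. In fact you are more explicit than the paper's own argument, which stops at the pathwise inequality and leaves the expectation, Jensen step, and the minimization over $z\in\scrS$ (which is what produces the $\dist(x,\scrS)^{2}$ term---the paper's $\dist(x,\scrS)$ is a typo) to the reader.
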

\begin{proof}
Let $z\in C\cap\dom(\partial\Phi)$ and $y\in(\partial\Phi+\NC_{C})(z)$, so that there exists $p\in\NC_{C}(z)$ such that $y-p\in\partial\Phi(z)$. Using \eqref{eq:KSubgradient}

\begin{align*}
\dif E_{z}(t)&=\inner{X(t)-z,-\beta(t)\nabla\Psi(X(t))\dif t}-\inner{X(t)-z,\dif K(t)}+\inner{\sigma^{\ast}(t,X(t))(X(t)-z),\dif W(t)}\\
&+\frac{1}{2}\tr[\sigma\sigma^{\ast}(t,X(t))]\dif t\\
&\leq -\beta(t)\inner{X(t)-z,\nabla\Psi(X(t))}\dif t+\inner{X(t)-z,\sigma(t,X(t))\dif W(t)}+\frac{1}{2}\tr[\sigma\sigma^{\ast}(t,X(t))]\dif t\\
&+(\Phi(z)-\Phi(X(t)))\dif t.
\end{align*}
Integration from $0$ to $t$ and rearranging yields 
\begin{align*}
\frac{1}{t}\int_{0}^{t}(\Phi(X(s))-\Phi(z))\dif s&\leq \frac{E_{z}(0)}{t}-\frac{1}{t}\int_{0}^{t}\beta(s)\inner{X(s)-z,\nabla\Psi(X(s))}\dif s +\frac{1}{t}\int_{0}^{t}\inner{X(s)-z,\sigma(t,X(t))\dif W(s)}\\
&+\frac{1}{2t}\int_{0}^{t}\Sigma^{2}(s)\dif s\\
&\leq \frac{E_{z}(0)}{t}-\frac{1}{t}\int_{0}^{t}\beta(s)\Psi(X(s))\dif s +\frac{1}{t}\int_{0}^{t}\inner{X(s)-z,\sigma(s,X(s))\dif W(s)}\\
&+\frac{1}{2t}\int_{0}^{t}\Sigma^{2}(s)\dif s\\
&\leq \frac{E_{z}(0)}{t}+\frac{1}{t}\int_{0}^{t}\inner{X(s)-z,\sigma(s,X(s))\dif W(s)}+\frac{1}{2t}\int_{0}^{t}\Sigma^{2}(s)\dif s.
\end{align*}
\end{proof}

\begin{remark}
 If instead the noise condition \eqref{eq:UBV} holds, it follows from Jensen's inequality 
\begin{align*}
\Ex[\Phi(\bar{X}(t))-\min_{C}\Phi\vert X(0)=x]& \leq \frac{\norm{x-z}^{2}+\int_{0}^{t}\Sigma^{2}(s)\dif s}{2t}\leq \frac{\dist(x,\scrS)^{2}+t\sigma_{*}^{2}}{2t}.
\end{align*}
Hence, we obtain only convergence up to a plateau.
\end{remark}

\section{Discrete-time analysis and numerical implementation}
\label{sec:discrete} 
%

In this section we derive from the continuos-time dynamical system \eqref{eq:SDIMMO} a discrete-time numerical algorithm for solving \eqref{eq:MI}, and derive finite-time complexity statements. Let $T>0$ be fixed. Let $\pi_{\delta}=\{0=t_{0}<t_{1}<\ldots<t_{N_{\delta}}=T\}$ be a partition of $[0,T]$ with mesh size $\Delta t_{k}^{\delta}\eqdef t_{k+1}-t_{k}$ and $\delta\eqdef\max\{\Delta t_{k}^{d}:0\leq k\leq N_{\delta}-1\}.$ We consider the sequence $(X^{\delta}_{n})_{n=0}^{N_{\delta}}$ recursively defined by
\begin{equation}\label{eq:SFB}
\left\{\begin{split}
&X^{\delta}_{n+1}=(\Id+\Delta t^{\delta}_{n}\opA)^{-1}(X^{\delta}_{n}-\Delta t^{\delta}_{n}\beta(t_n)(\nabla\Psi(X^{\delta}_{n})+\sigma(t_{n}^{\delta},X^{\delta}_{n})(W_{t_{n+1}}-W_{t_{n}})),\\
&X_{0}^{\delta}=\xi
\end{split}\right.
\end{equation}
The resulting scheme is a noisy version of the splitting method investigated in \cite{AttCzarPey11}. It arises also naturally in the discretization framework of \cite{Bernardin:2003aa}. More precisely, the following consistency theorem is a central result in \cite[][Theorem 7.1]{Bernardin:2003aa}: 
\begin{theorem}\label{th:BerConvergence}
    Consider the discrete-time Markov chain $(X^{\delta}_{n})_{n=0}^{N_{\delta}}$ constructed recursively by \eqref{eq:SFB}. For $n\in\{0,1,\ldots,N_{\delta}-1\}$ and $t\in[t_{n},t_{n+1})$, consider the piecewise linear interpolation 
    \begin{equation}
        X^{\delta}(t)=X^{\delta}_{n}+\frac{t-t_{n}}{\Delta t_{n}^{\delta}}(X^{\delta}_{n+1}-X^{\delta}_{n}).
    \end{equation}
    Then $X^{\delta}$ is a stochastic process with continuous sample paths. Let $X\in\bS^{p}_{d}[0,T]$ denote the unique solution to \eqref{eq:SDIMMO} with initial condition $X(0)=\xi$. Then, 
    \begin{equation}
        \lim_{\delta\to 0}\Ex\left(\sup_{t\in[0,T]}\norm{X^{\delta}(t)-X(t)}^{2}\right)=0.
    \end{equation}
\end{theorem}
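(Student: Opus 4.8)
The plan is to recognise that \eqref{eq:SFB} is a particular instance of the general Euler--Maruyama-type scheme analysed abstractly in \cite{Bernardin:2003aa}, so that \cite[Theorem~7.1]{Bernardin:2003aa} applies once we verify that the data of \eqref{eq:SDIMMO} meet its hypotheses; below I also indicate the structure of the underlying argument, since that is where the work sits. First I would check the structural requirements on a fixed interval $[0,T]$: the drift $b(t,x)\eqdef-\beta(t)\nabla\Psi(x)$ is Carath\'eodory and, by Assumption~\ref{ass:Psi} together with the boundedness of $\beta$ on $[0,T]$, Lipschitz in $x$ uniformly on bounded sets; the diffusion $\sigma$ is Lipschitz in $x$ with $\sup_{t\in[0,T]}\Sigma(t)<\infty$ by Assumption~\ref{ass:Noisebound}; $\opA$ is maximal monotone with $\Int(\dom\opA)\neq\varnothing$ by Assumption~\ref{ass:OpMonotone}; and $\xi\in L^{2}(\Omega;\cl(\dom\opA))$. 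Consequently both the continuous solution $X$ and the interpolants $X^{\delta}$ admit uniform $L^{p}$ bounds on $[0,T]$ --- for $X$ this is Proposition~\ref{prop:1}(i), and for $X^{\delta}$ it follows from the same energy computation applied to the recursion --- which lets one treat the coefficients as if globally Lipschitz after a standard stopping-time localisation.

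For the core estimate I would write the scheme in integral form $X^{\delta}(t)=\xi+\int_{0}^{t}b(s,\bar X^{\delta}(s))\,\dif s-K^{\delta}(t)+\int_{0}^{t}\sigma(s,\bar X^{\delta}(s))\,\dif W(s)$, where $\bar X^{\delta}(s)\eqdef X^{\delta}_{n}$ on $[t_{n},t_{n+1})$ is the associated step process and $K^{\delta}$ is the piecewise-linear interpolation of the accumulated resolvent corrections; by the resolvent identity $K^{\delta}$ has continuous BV paths and satisfies $\dif K^{\delta}\in\opA(\bar X^{\delta})\,\dif t$ in the sense of condition (SE.4). Applying It\^o's formula to $\tfrac12\norm{X^{\delta}(t)-X(t)}^{2}$, the cross term $-\inner{X^{\delta}(t)-X(t),\dif K^{\delta}(t)-\dif K(t)}$ is controlled by monotonicity of $\opA$ --- testing (SE.4) for $X$ against $(\bar X^{\delta},\dif K^{\delta}/\dif t)$ and, symmetrically, the defining property of $K^{\delta}$ against $(X,\dif K/\dif t)$ --- leaving only a remainder coupling $X^{\delta}-\bar X^{\delta}$ with $\dif K^{\delta}$ and $\dif K$. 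The drift and diffusion contributions are bounded via their Lipschitz constants after the splitting $b(s,\bar X^{\delta})-b(s,X)=[b(s,\bar X^{\delta})-b(s,X^{\delta})]+[b(s,X^{\delta})-b(s,X)]$ and likewise for $\sigma$, and the Burkholder--Davis--Gundy inequality turns the stochastic integral into a term $\lesssim\int_{0}^{t}\Ex\sup_{r\leq s}\norm{X^{\delta}(r)-X(r)}^{2}\dif s$ plus a remainder. One is then left with a Gronwall inequality for $u(t)\eqdef\Ex\sup_{s\leq t}\norm{X^{\delta}(s)-X(s)}^{2}$ whose inhomogeneous term is the one-step consistency error $\Ex\sup_{s\leq T}\norm{X^{\delta}(s)-\bar X^{\delta}(s)}^{2}$ together with the contributions of $K^{\delta}$ accumulated over the mesh gaps.

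The main obstacle is precisely this last point: showing that the discretisation error $\norm{X^{\delta}(t)-\bar X^{\delta}(t)}$ and the mismatch between the bounded-variation processes $K^{\delta}$ and $K$ vanish in $L^{2}$ as $\delta\to0$. Because $K^{\delta}$ is built from resolvents of $\opA$, which may be genuinely set-valued and single-valued only on $\dom\opA$, its total variation cannot be bounded by elementary means; this is where the interior-point hypothesis $\Int(\dom\opA)\neq\varnothing$ and the technical argument of \cite{Bernardin:2003aa} --- a uniform equicontinuity/tightness estimate on the family $\{K^{\delta}\}_{\delta}$ combined with the a priori $L^{p}$ bounds above --- are genuinely needed. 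Once that uniform control is established, the inhomogeneous term in the Gronwall inequality tends to $0$ with $\delta$, and Gronwall's lemma yields $\lim_{\delta\to0}\Ex\big(\sup_{t\in[0,T]}\norm{X^{\delta}(t)-X(t)}^{2}\big)=0$. I would therefore present the proof as: the hypotheses of \cite[Theorem~7.1]{Bernardin:2003aa} are satisfied under Assumptions~\ref{ass:OpMonotone}--\ref{ass:Noisebound}, so its conclusion applies verbatim to \eqref{eq:SFB}; the sketch above records why.
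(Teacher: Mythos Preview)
Your proposal is correct and matches the paper's approach: the paper does not prove this theorem but simply attributes it to \cite[Theorem~7.1]{Bernardin:2003aa}, and you do exactly the same, additionally sketching why the hypotheses of that reference are met and how its argument runs. The extra explanatory material you provide goes beyond what the paper offers, but the essential content---reduction to Bernardin's result---is identical.
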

 

\subsection{Finite time complexity analysis of the discrete-time scheme}
Our aim in this section is to derive complexity results for the discrete-time scheme \eqref{eq:SFB} for fixed $T>0$ and vanishing step size $\lambda_{n}=\Delta t_{n}^{\delta}$.  This means that the time discretization vanishes, a setting that is formally not covered by the approximation result  reported in Theorem \ref{th:BerConvergence}.  Additionally, we formally set $N\equiv N_{\delta},X_{n}\equiv X^{\delta}_{n},\beta_{n}\equiv \beta(t_{n})$, and
$$
\sigma_{n}(X_{n})\eqdef \sigma(t_{n},X_{n}),\; \Delta W_{n+1}\eqdef W_{t_{n+1}}-W_{t_{n}} \text{ for }n=0,\ldots,N-1.
$$
Hence, $\Delta W_{n+1}$ is normally distributed random variable with mean zero and covariance matrix $\lambda_{n}\Id$ for all $n=0,1,\ldots,N-1$. Resulting from this discretization, we define $\Delta B_{n+1}\eqdef \sigma_{n}(X_{n})\Delta W_{n+1}$,  and the discrete-time martingale transform $(B_{n})_{n=0}^{N}$ as the process 
$$
B_{0}\eqdef 0, \quad  B_{n+1}\eqdef B_{0}+\sum_{i=0}^{n}\sigma_{i}(X_{i})\Delta W_{i+1}=\sum_{i=0}^{n}\Delta B_{i+1}.
$$
Adopting this notation, we can realize the Markov chain $(X_{n})_{n=0}^{N}$ as the stochastic recursion
\begin{equation}
    \label{eq:scheme}
\left\{
\begin{split}
    &V_{n+1}=\beta_{n}\nabla\Psi(X_{n})+\Delta B_{n+1}, \\
&X_{n+1}=(\Id+\lambda_{n}\opA)^{-1}(X_{n}-\lambda_{n}V_{n+1}). 
\end{split}
\right.
\end{equation}

\begin{remark}
Our analysis of the scheme \eqref{eq:scheme} extends \cite{AttCzarPey11} to the stochastic setting, assuming the availability of an unbiased stochastic oracle with decreasing variance. This can be achieved via variance reduction techniques. 
\end{remark}

\begin{remark}
    The discrete-time scheme \eqref{eq:scheme} is not intended to represent a converging numerical discretization of the continuous-time method. Instead, we take the continuous-time system as an inspiration to design numerical algorithms for solving the constrained variational inequality. 
\end{remark}
\begin{remark}
Our scheme can also be used as a numerical method for solving the hierarchical equilibrium problem \eqref{eq:MI} for $C=\argmin_{x\in\R^{d}}\Psi(x)$, under the assumption that we only have access to the lower level optimization problem in terms of a stochastic first-order oracle.  
\end{remark}

\begin{lemma}\label{lem:discrete1}
    Take $z\in\dom(\opA+\NC_{C})$ and $v\in\opA(z)$. Then, for all $\eta> 0$ and $n\geq 1$, we have 
    \begin{equation}\label{eq:discreteenergy1}
    \begin{split}
                \norm{X_{n}-z}^{2}-&\norm{X_{n+1}-z}^{2}-\frac{\eta}{2(1+\eta)}\norm{X_{n+1}-X_{n}}^{2}\geq \frac{2\eta\lambda_{n}\beta_{n}}{1+\eta}\Psi(X_{n})\\
&-\lambda_{n}\beta_{n}\left((1+\eta)\lambda_{n}\beta_{n}-\frac{2}{(1+\eta)L_{\Psi}}\right)\norm{\nabla\Psi(X_{n})}^{2}+2\lambda_{n}\inner{\Delta B_{n+1}+v,X_{n}-z}\\
&-\frac{4\lambda^{2}_{n}(1+\eta)}{\eta}\norm{v}^{2}-\frac{4\lambda^{2}_{n}(1+\eta)}{\eta}\norm{\Delta B_{n+1}}^{2}.
        \end{split}\end{equation}
\end{lemma}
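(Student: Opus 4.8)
The plan is to derive a one-step energy estimate for the anchor $\norm{X_n-z}^2$. First I would make the resolvent step explicit: since $X_{n+1}=(\Id+\lambda_n\opA)^{-1}(X_n-\lambda_n V_{n+1})$ with $V_{n+1}=\beta_n\nabla\Psi(X_n)+\Delta B_{n+1}$, there is a selection $a_{n+1}\in\opA(X_{n+1})$ with $X_{n+1}-X_n=-\lambda_n(\beta_n\nabla\Psi(X_n)+\Delta B_{n+1}+a_{n+1})$. Substituting this into the three-point identity $\norm{X_n-z}^2-\norm{X_{n+1}-z}^2=-2\inner{X_{n+1}-X_n,X_{n+1}-z}+\norm{X_{n+1}-X_n}^2$ yields
\[
\norm{X_n-z}^2-\norm{X_{n+1}-z}^2=2\lambda_n\inner{\beta_n\nabla\Psi(X_n)+\Delta B_{n+1}+a_{n+1},\,X_{n+1}-z}+\norm{X_{n+1}-X_n}^2.
\]
Since $(z,v)\in\gr(\opA)$ and $(X_{n+1},a_{n+1})\in\gr(\opA)$, monotonicity of $\opA$ gives $\inner{a_{n+1}-v,X_{n+1}-z}\geq 0$, so $a_{n+1}$ may be replaced by $v$ at the cost of an inequality. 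I then split $X_{n+1}-z=(X_n-z)+(X_{n+1}-X_n)$ inside the inner product, which isolates the anchor term $2\lambda_n\inner{\beta_n\nabla\Psi(X_n)+\Delta B_{n+1}+v,\,X_n-z}$ together with two cross terms $T_1:=2\lambda_n\beta_n\inner{\nabla\Psi(X_n),X_{n+1}-X_n}$ and $T_2:=2\lambda_n\inner{\Delta B_{n+1}+v,X_{n+1}-X_n}$.

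For the drift contribution to the anchor term I exploit that $z\in\dom(\opA+\NC_C)$ implies $z\in C=\argmin\Psi$, hence $\Psi(z)=0$ and $\nabla\Psi(z)=0$. Convexity of $\Psi$ gives $\inner{\nabla\Psi(X_n),X_n-z}\geq\Psi(X_n)$, while cocoercivity of $\nabla\Psi$ ($L_{\Psi}$-smoothness plus convexity) gives $\inner{\nabla\Psi(X_n),X_n-z}\geq\frac{1}{L_{\Psi}}\norm{\nabla\Psi(X_n)}^2$. Taking the convex combination of these two bounds with weights $\frac{\eta}{1+\eta}$ and $\frac{1}{1+\eta}$ — nonnegative exactly because $\eta\geq 0$ — produces
\[
2\lambda_n\beta_n\inner{\nabla\Psi(X_n),X_n-z}\geq\frac{2\eta\lambda_n\beta_n}{1+\eta}\Psi(X_n)+\frac{2\lambda_n\beta_n}{(1+\eta)L_{\Psi}}\norm{\nabla\Psi(X_n)}^2,
\]
which supplies the $\Psi(X_n)$ term and the positive part of the $\norm{\nabla\Psi(X_n)}^2$ coefficient in \eqref{eq:discreteenergy1}; the term $2\lambda_n\inner{\Delta B_{n+1}+v,X_n-z}$ is carried along untouched.

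It then remains to bound $T_1$ and $T_2$ from below by Young's inequality with constants tuned to the target. Choosing the parameter $1+\eta$ for $T_1$ gives $T_1\geq-(1+\eta)\lambda_n^2\beta_n^2\norm{\nabla\Psi(X_n)}^2-\frac{1}{1+\eta}\norm{X_{n+1}-X_n}^2$; choosing the parameter $\frac{2(1+\eta)}{\eta}$ for $T_2$ and using $\norm{\Delta B_{n+1}+v}^2\leq 2\norm{\Delta B_{n+1}}^2+2\norm{v}^2$ gives $T_2\geq-\frac{4\lambda_n^2(1+\eta)}{\eta}(\norm{\Delta B_{n+1}}^2+\norm{v}^2)-\frac{\eta}{2(1+\eta)}\norm{X_{n+1}-X_n}^2$. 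Summing all contributions, the coefficient of $\norm{X_{n+1}-X_n}^2$ becomes $1-\frac{1}{1+\eta}-\frac{\eta}{2(1+\eta)}=\frac{\eta}{2(1+\eta)}$, precisely the quantity subtracted on the left-hand side of \eqref{eq:discreteenergy1}; moving it to the left and merging the two $\norm{\nabla\Psi(X_n)}^2$ contributions into $(1+\eta)\lambda_n\beta_n-\frac{2}{(1+\eta)L_{\Psi}}$ reproduces the claim verbatim. (For $\eta=0$ the inequality is vacuous, since the error terms are then $-\infty$.)

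No step presents a genuine obstacle; the argument is a careful but elementary one-step estimate. The only place requiring attention is the constant bookkeeping: the Young parameters must be chosen so that the leftover kinetic term is exactly $\frac{\eta}{2(1+\eta)}\norm{X_{n+1}-X_n}^2$ and the error terms carry exactly the factor $\frac{4\lambda_n^2(1+\eta)}{\eta}$, and the weight $\frac{\eta}{1+\eta}$ on the convexity bound must be matched by the complementary weight $\frac{1}{1+\eta}$ on the cocoercivity bound so that the $\Psi(X_n)$ and $\norm{\nabla\Psi(X_n)}^2$ coefficients come out as in the statement. The one conceptual point worth flagging is that membership $z\in\dom(\opA+\NC_C)$ automatically places $z$ in $C=\argmin\Psi$, which is what makes $\nabla\Psi(z)=0$ and hence the cocoercivity inequality available.
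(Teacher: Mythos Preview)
Your proposal is correct and follows essentially the same route as the paper: resolvent characterization plus monotonicity of $\opA$, the three-point identity, the $\frac{\eta}{1+\eta}/\frac{1}{1+\eta}$ convex combination of the convexity and cocoercivity bounds for $\inner{\nabla\Psi(X_n),X_n-z}$, and two Young inequalities for the cross terms with exactly the parameters you chose. The paper introduces a free Young parameter $c$ for $T_2$ and then sets $c=\frac{\eta}{2\lambda_n(1+\eta)}$, whereas you pick the parameter directly; otherwise the arguments coincide, and your explicit remark that $z\in\dom(\opA+\NC_C)\Rightarrow z\in C$ (hence $\nabla\Psi(z)=0$) makes transparent a point the paper uses tacitly.
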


\begin{proof}
    Let $(z,z^{*})\in\gr(\opA+\NC_{C})$ be arbitrary, so that $z^{*}=v+p$ with $v\in\opA(z)$ and $p\in\NC_{C}(z)$. 

In every iteration $n$, we thus have 
$$
\frac{X_{n}-X_{n+1}}{\lambda_{n}}-V_{n+1}\in\opA(X_{n+1}), \text{ and } z^{*}-p=v\in\opA(z).
$$
Monotonicity of $\opA$ gives 
\begin{align*}
2\inner{X_{n}-X_{n+1},X_{n+1}-z}&\geq 2\lambda_{n}\inner{V_{n+1},X_{n+1}-z}-2\lambda_{n}\inner{p-z^{*},X_{n+1}-z}\\
&=2\lambda_{n}\beta_{n}\inner{\nabla\Psi(X_{n}),X_{n+1}-z}+2\lambda_{n}\inner{\Delta B_{n+1},X_{n+1}-z}-2\lambda_{n}\inner{p-z^{*},X_{n+1}-z}.
\end{align*}
Using the 3-point identity 
$$
2\inner{X_{n}-X_{n+1},z-X_{n+1}}=\norm{X_{n+1}-X_{n}}^{2}-\norm{X_{n}-z}^{2}+\norm{z-X_{n+1}}^{2},
$$
we arrive at 
\begin{align*}
\norm{X_{n}-z}^{2}-&\norm{X_{n+1}-z}^{2}-\norm{X_{n+1}-X_{n}}^{2}\geq 2\lambda_{n}\beta_{n}\inner{\nabla\Psi(X_{n}),X_{n}-z} +2\lambda_{n}\inner{\Delta B_{n+1},X_{n}-z}\\
&+2\lambda_{n}\inner{v,X_{n}-z}+2\lambda_{n}\inner{\beta_{n}\nabla\Psi(X_{n})+v+\Delta B_{n+1},X_{n+1}-X_{n}}.
\end{align*}

Since $\Psi\in\bC^{1,1}_{L_{\Psi}}(\R^{d})$ and convex, we have
\begin{align*}
&\inner{\nabla\Psi(X_{n}),X_{n}-z}\geq \frac{1}{L_{\Psi}}\norm{\nabla\Psi(X_{n})}^{2},\text{ and }\\
&\inner{\nabla\Psi(X_{n}),X_{n}-z}\geq \Psi(X_{n}).
\end{align*}
Taking a convex combination of these two estimates yields for every $\eta>0$ and $n\geq 0$, 
\begin{equation}\label{eq:first}
\inner{\nabla\Psi(X_{n}),X_{n}-z}\geq \frac{\eta}{1+\eta}\Psi(X_{n})+\frac{1}{(1+\eta)L_{\Psi}}\norm{\nabla\Psi(X_{n})}^{2}.
\end{equation}
Second, we make use of the identity 
\begin{align*}
\frac{1}{1+\eta}\norm{X_{n+1}-X_{n}+(1+\eta)\lambda_{n}\beta_{n}\nabla\Psi(X_{n})}^{2}&=\frac{1}{1+\eta}\norm{X_{n+1}-X_{n}}^{2}+(1+\eta)\lambda^{2}_{n}\beta^{2}_{n}\norm{\nabla\Psi(X_{n})}^{2}\\
&+2\lambda_{n}\beta_{n}\inner{\nabla\Psi(X_{n}),X_{n+1}-X_{n}},
\end{align*}
in order to bound
\begin{equation}\label{eq:second}
2\lambda_{n}\beta_{n}\inner{\nabla\Psi(X_{n}),X_{n+1}-X_{n}}\geq -\frac{1}{1+\eta}\norm{X_{n+1}-X_{n}}^{2}-(1+\eta)\lambda^{2}_{n}\beta^{2}_{n}\norm{\nabla\Psi(X_{n})}^{2}.
\end{equation}
Combining \eqref{eq:first} and \eqref{eq:second}, together with Young's inequality, we arrive at the estimate
\begin{align*}
\norm{X_{n}-z}^{2}-&\norm{X_{n+1}-z}^{2}-\norm{X_{n+1}-X_{n}}^{2}\geq\frac{2\eta\lambda_{n}\beta_{n}}{1+\eta}\Psi(X_{n})+\left(\frac{2\lambda_{n}\beta_{n}}{(1+\eta)L_{\Psi}}-(1-\eta)\lambda_{n}^{2}\beta_{n}^{2}\right)\norm{\nabla\Psi(X_{n})}^{2}\\
&-\frac{\lambda_{n}}{c}\norm{\Delta B_{n+1}+v}^{2}-\left(\frac{1}{1+\eta}+\lambda_{n}c\right)\norm{X_{n+1}-X_{n}}^{2}+2\lambda_{n}\inner{v,X_{n}-z}+2\lambda_{n}\inner{\Delta B_{n+1},X_{n}-z},
\end{align*}
for every $c>0$. Rearranging and using a square bound, this leads to 
\begin{align*}
\norm{X_{n}-z}^{2}-&\norm{X_{n+1}-z}^{2}-\left(1-\frac{1}{1+\eta}-\lambda_{n}c\right)\norm{X_{n+1}-X_{n}}^{2}\geq \frac{2\eta\lambda_{n}\beta_{n}}{1+\eta}\Psi(X_{n})\\
&-\lambda_{n}\beta_{n}\left((1+\eta)\lambda_{n}\beta_{n}-\frac{2}{(1+\eta)L_{\Psi}}\right)\norm{\nabla\Psi(X_{n})}^{2}+2\lambda_{n}\inner{v,X_{n}-z}+2\lambda_{n}\inner{\Delta B_{n+1},X_{n}-z}\\
&-\frac{2\lambda_{n}}{c}\norm{v}^{2}-\frac{2\lambda_{n}}{c}\norm{\Delta B_{n+1}}^{2}.
\end{align*}
Choosing $c=\frac{\eta}{2\lambda_{n}(1+\eta)}$, we finally arrive at the desired inequality 
\begin{align*}
\norm{X_{n}-z}^{2}-&\norm{X_{n+1}-z}^{2}-\frac{\eta}{2(1+\eta)}\norm{X_{n+1}-X_{n}}^{2}\geq \frac{2\eta\lambda_{n}\beta_{n}}{1+\eta}\Psi(X_{n})\\
&-\lambda_{n}\beta_{n}\left((1+\eta)\lambda_{n}\beta_{n}-\frac{2}{(1+\eta)L_{\Psi}}\right)\norm{\nabla\Psi(X_{n})}^{2}+2\lambda_{n}\inner{v,X_{n}-z}+2\lambda_{n}\inner{\Delta B_{n+1},X_{n}-z}\\
&-\frac{4\lambda^{2}_{n}(1+\eta)}{\eta}\norm{v}^{2}-\frac{4\lambda^{2}_{n}(1+\eta)}{\eta}\norm{\Delta B_{n+1}}^{2}.
\end{align*}
\end{proof}

\begin{lemma}\label{lem:discrete2}
    Assume that $\limsup_{n\to\infty}\lambda_{n}\beta_{n}<\frac{2}{L_{\Psi}}$. Then there exist $c_{1},c_{2}>0$ and $N\in\N$ deterministic, such that for all $n\geq N$ and any $z\in\dom(\opA+\NC_{C})$ and any $v\in\opA(z)$, we have 
    \begin{align*}
        \norm{X_{n+1}-z}^{2}&-\norm{X_{n}-z}^{2}+c_{1}\left(\frac{1}{2}\norm{X_{n+1}-X_{n}}^{2}+2\lambda_{n}\beta_{n}\Psi(X_{n})+\lambda_{n}\beta_{n}\norm{\nabla\Psi(X_{n})}^{2}\right)\\
        &\leq 2\lambda_{n}\inner{\Delta B_{n+1}+v,z-X_{n}}+c_{2}\lambda^{2}_{n}\left(\norm{v}^{2}+\norm{\Delta B_{n+1}}^{2}\right). 
    \end{align*}
\end{lemma}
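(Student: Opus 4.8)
The plan is to deduce the statement directly from Lemma~\ref{lem:discrete1}, the only real input being an appropriate choice of the free parameter $\eta>0$ there so that, beyond a deterministic index, the coefficient multiplying $\norm{\nabla\Psi(X_n)}^2$ switches from a loss into a gain.

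First I would convert the asymptotic hypothesis into a uniform reserve. Pick a constant $\bar L$ with $\limsup_{n\to\infty}\lambda_n\beta_n<\bar L<\tfrac{2}{L_\Psi}$ (for instance the midpoint of these two values) and then choose $\eta>0$ small enough that $(1+\eta)^2\bar L<\tfrac{2}{L_\Psi}$; this is possible since $\tfrac{2}{L_\Psi\bar L}>1$. By definition of the limit superior there is a \emph{deterministic} $N\in\N$ with $\lambda_n\beta_n\le\bar L$ for all $n\ge N$, so that for such $n$
\begin{align*}
-\lambda_n\beta_n\Bigl((1+\eta)\lambda_n\beta_n-\tfrac{2}{(1+\eta)L_\Psi}\Bigr)
&=\lambda_n\beta_n\Bigl(\tfrac{2}{(1+\eta)L_\Psi}-(1+\eta)\lambda_n\beta_n\Bigr)\\
&\ge \kappa\,\lambda_n\beta_n,\qquad \kappa\eqdef\tfrac{2}{(1+\eta)L_\Psi}-(1+\eta)\bar L>0 .
\end{align*}
Hence, from the index $N$ on, the gradient contribution in \eqref{eq:discreteenergy1} has become a genuine descent term $\kappa\lambda_n\beta_n\norm{\nabla\Psi(X_n)}^2$ — exactly the type of term the left-hand side of the asserted inequality calls for.

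Next, for $n\ge N$ I would just transpose terms in \eqref{eq:discreteenergy1}: move $-\norm{X_{n+1}-z}^2$, the kinetic term $\tfrac{\eta}{2(1+\eta)}\norm{X_{n+1}-X_n}^2$, the term $\tfrac{2\eta}{1+\eta}\lambda_n\beta_n\Psi(X_n)$ and the now positive $\kappa\lambda_n\beta_n\norm{\nabla\Psi(X_n)}^2$ to the left, and keep on the right $2\lambda_n\inner{\Delta B_{n+1}+v,z-X_n}$ — using $\inner{\Delta B_{n+1}+v,X_n-z}=-\inner{\Delta B_{n+1}+v,z-X_n}$ — together with the two Young error terms $\tfrac{4(1+\eta)}{\eta}\lambda_n^2\norm{v}^2$ and $\tfrac{4(1+\eta)}{\eta}\lambda_n^2\norm{\Delta B_{n+1}}^2$, which combine into a single $c_2\lambda_n^2\bigl(\norm{v}^2+\norm{\Delta B_{n+1}}^2\bigr)$. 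Taking $c_1\eqdef\min\{\tfrac{\eta}{1+\eta},\kappa\}>0$ and $c_2\eqdef\tfrac{4(1+\eta)}{\eta}>0$ then yields the claim, since $\tfrac{\eta}{2(1+\eta)}\ge\tfrac{c_1}{2}$, $\tfrac{2\eta}{1+\eta}\ge 2c_1$ and $\kappa\ge c_1$, while every coefficient I lowered multiplies a nonnegative quantity ($\Psi\ge0$ and the squared norms), so passing to the smaller coefficients only weakens the left-hand side.

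I do not expect a serious obstacle: the lemma is a corollary of Lemma~\ref{lem:discrete1}, and the single delicate point is the interplay of $\eta$ with the \emph{strict} inequality in the hypothesis. On the one hand $\eta$ must be small enough that $\kappa>0$ holds uniformly over $n\ge N$ — this is exactly where strictness of $\limsup_{n\to\infty}\lambda_n\beta_n<\tfrac{2}{L_\Psi}$ is used — and on the other hand $\eta$ enters $c_2$ through the blow-up factor $(1+\eta)/\eta$; but since only existence of finite constants is claimed, any admissible $\eta$ (e.g. the one with $(1+\eta)^2=\tfrac12(1+\tfrac{2}{L_\Psi\bar L})$) works, and the resulting $N,c_1,c_2$ are deterministic and independent of $z$ and $v$.
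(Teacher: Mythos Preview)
Your proposal is correct and follows essentially the same route as the paper: start from Lemma~\ref{lem:discrete1}, use the strict hypothesis to pick $\eta>0$ small enough that the coefficient of $\norm{\nabla\Psi(X_n)}^{2}$ becomes a gain for all $n\ge N$, and read off $c_1,c_2$ from the resulting $\eta$-dependent constants. The paper organizes the bookkeeping slightly differently (it adds $\tfrac{\eta}{1+\eta}\lambda_n\beta_n\norm{\nabla\Psi(X_n)}^{2}$ to both sides and then drops the resulting negative right-hand contribution, obtaining $c_1=\tfrac{\eta}{1+\eta}$ directly rather than via a $\min$), and note that both your computation and the paper's own proof end with $c_2\lambda_n^2(\norm{v}^2+\norm{\Delta B_{n+1}}^2)$ rather than the $\norm{\nabla\Psi(X_n)}^2$ appearing in the displayed statement---that discrepancy is a typo in the lemma's statement, not a flaw in your argument.
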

\begin{proof}
Continuing from the estimate reported in Lemma \ref{lem:discrete1}, we add the term $\frac{\eta}{1+\eta}\lambda_{n}\beta_{n}\norm{\nabla\Psi(X_{n})}^{2}$ to both sides, to obtain 
\begin{align*}
    \norm{X_{n+1}-z}^{2}-&\norm{X_{n}-z}^{2}+\frac{\eta}{2(1+\eta)}\norm{X_{n+1}-X_{n}}^{2}+\frac{2\eta\lambda_{n}\beta_{n}}{1+\eta}\Psi(X_{n})+\frac{\eta}{1+\eta}\lambda_{n}\beta_{n}\norm{\nabla\Psi(X_{n})}^{2}\\
    &\leq \lambda_{n}\beta_{n}\left((1+\eta)\lambda_{n}\beta_{n}-\frac{2}{(1+\eta)L_{\Psi}}+\frac{\eta}{1+\eta}\right)\norm{\nabla\Psi(X_{n})}^{2}+2\lambda_{n}\inner{\Delta B_{n+1}+v,z-X_{n}}\\
    &+\frac{4\lambda^{2}_{n}(1+\eta)}{\eta}\left(\norm{v}^{2}+\norm{\Delta B_{n+1}}^{2}\right).
\end{align*}
As we assume $\limsup_{n\to\infty}\lambda_{n}\beta_{n}<\frac{2}{L_{\Psi}}$, there exists $N\in\N$ such that $2\lambda_{n}\beta_{n}<2/L_{\Psi}$ for all $n\geq N$. Moreover, 
$$
\lim_{\eta\to0}\left((1+\eta)\lambda_{n}\beta_{n}-\frac{2}{(1+\eta)L_{\Psi}}+\frac{\eta}{1+\eta}\right)=\lambda_{n}\beta_{n}-\frac{2}{L_{\Psi}}<0,
$$
we can choose $\eta_{0}>0$ sufficiently small, and set 
$$
c_{1}=\frac{\eta_{0}}{1+\eta_{0}},\quad c_{2}=\frac{4(1+\eta_{0})}{\eta_{0}},
$$
so that for all $n\geq N$, 
\begin{align*}
    \norm{X_{n+1}-z}^{2}-&\norm{X_{n}-z}^{2}+c_{1}\left(\frac{1}{2}\norm{X_{n+1}-X_{n}}^{2}+2\lambda_{n}\beta_{n}\Psi(X_{n})+\lambda_{n}\beta_{n}\norm{\nabla\Psi(X_{n})}^{2}\right)\\
    &\leq 2\lambda_{n}\inner{\Delta B_{n+1}+v,z-X_{n}}+c_{2}\lambda_{n}^{2}\left(\norm{v}^{2}+\norm{\Delta B_{n+1}}^{2}\right),     
\end{align*}
as claimed.
\end{proof}
We are now ready to present the main convergence proof of the discrete-time scheme. We impose assumptions which are the natural discrete-time formulations of the continuous-time scheme. 
\begin{theorem}
    Let $(X_{n})_{n\in\N}$ be the non-autonomous Markov chain generated by \eqref{eq:scheme}. Assume that $\limsup_{n\to\infty}\lambda_{n}\beta_{n}<\frac{2}{L_{\Psi}}$, and the following conditions holds true: 
    \begin{align}
&(\forall p\in\range(\NC_{C})):\; \sum_{n\geq 1}\lambda_{n}\beta_{n}h_{C}(p/\beta_{n})<\infty,\\ 
& 
\sup_{x\in\R^{d}}\norm{\sigma_{n}(x)}^{2}_{F}\leq s^{2}_{n},\quad \text{with }\sum_{n=1}^{\infty}\lambda^{2}_{n}<\infty,\sum_{n\geq 1}\lambda_{n}s_{n}^{2}<\infty. 
\end{align}
Then, 
\begin{itemize}
    \item $\lim_{n\to\infty}\Ex[\norm{X_{n}-z}^{2}]$ exists in $\R$ 
    \item $\Ex[\norm{X_{n+1}-X_{n}}^{2}],\Ex[\lambda_{n}\beta_{n}\Psi(X_{n})]$, and $\Ex[\lambda_{n}\beta_{n}\norm{\nabla\Psi(X_{n})}^{2}]$ all converge to 0. 
    \item If $\inf_{n\ge 1}\lambda_{n}\beta_{n}>0$, then $\Psi(X_{n})\to 0$ and $\norm{\nabla\Psi(X_{n})}\to 0 $ a.s. 
\end{itemize}
\end{theorem}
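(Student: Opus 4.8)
The plan is to convert the one-step energy inequality of Lemma~\ref{lem:discrete2} into an almost-supermartingale recursion and then invoke a Robbins--Siegmund argument. Fix a point $z\in\scrS=\zer(\opA+\NC_{C})$; since $0\in\opA(z)+\NC_{C}(z)$, there exists $p\in\NC_{C}(z)$ with $-p\in\opA(z)$. Applying Lemma~\ref{lem:discrete2} with this $z$ and $v=-p$ gives, for all $n\ge N$,
\[
\norm{X_{n+1}-z}^{2}-\norm{X_{n}-z}^{2}+c_{1}\big(\tfrac12\norm{X_{n+1}-X_{n}}^{2}+2\lambda_{n}\beta_{n}\Psi(X_{n})+\lambda_{n}\beta_{n}\norm{\nabla\Psi(X_{n})}^{2}\big)\le 2\lambda_{n}\inner{\Delta B_{n+1},z-X_{n}}+2\lambda_{n}\inner{p,X_{n}-z}+c_{2}\lambda_{n}^{2}\big(\norm{p}^{2}+\norm{\Delta B_{n+1}}^{2}\big).
\]

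Next I would treat the two cross terms. Conditioning on $\scrF_{n}$, the increment $2\lambda_{n}\inner{\Delta B_{n+1},z-X_{n}}$ has zero mean, because $X_{n}$ is $\scrF_{n}$-measurable while $\Delta W_{n+1}$ is independent of $\scrF_{n}$ with mean $0$, so $\Ex[\Delta B_{n+1}\mid\scrF_{n}]=0$. For $2\lambda_{n}\inner{p,X_{n}-z}$, using $\inner{p,z}=\support_{C}(p)$ (valid since $p\in\NC_{C}(z)$) together with the Fenchel--Young inequality applied to the rescaled covector $cp/\beta_{n}$, one has for every $c>1/c_{1}$
\[
\inner{p,X_{n}-z}=\frac{\beta_{n}}{c}\Big(\inner{\tfrac{cp}{\beta_{n}},X_{n}}-\support_{C}\big(\tfrac{cp}{\beta_{n}}\big)\Big)\le\frac{\beta_{n}}{c}\,h_{C}\big(\tfrac{cp}{\beta_{n}}\big)+\frac{\beta_{n}}{c}\Psi(X_{n}),
\]
and the resulting term $\tfrac{2\lambda_{n}\beta_{n}}{c}\Psi(X_{n})$ is absorbed by $2c_{1}\lambda_{n}\beta_{n}\Psi(X_{n})$ on the left-hand side, leaving the positive coefficient $2(c_{1}-\tfrac1c)$ in front of $\lambda_{n}\beta_{n}\Psi(X_{n})$. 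Since $cp\in\NC_{C}(z)\subseteq\range(\NC_{C})$, the Attouch--Czarnecki-type summability hypothesis guarantees $\sum_{n}\lambda_{n}\beta_{n}h_{C}(cp/\beta_{n})<\infty$.

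Taking $\Ex[\,\cdot\mid\scrF_{n}]$ and using $\Ex[\norm{\Delta B_{n+1}}^{2}\mid\scrF_{n}]=\lambda_{n}\norm{\sigma_{n}(X_{n})}_{\rm F}^{2}\le\lambda_{n}s_{n}^{2}$, I obtain
\[
\Ex\big[\norm{X_{n+1}-z}^{2}\mid\scrF_{n}\big]+\Lambda_{n}\le\norm{X_{n}-z}^{2}+\zeta_{n},\qquad n\ge N,
\]
where $\Lambda_{n}\eqdef\tfrac{c_{1}}{2}\norm{X_{n+1}-X_{n}}^{2}+2(c_{1}-\tfrac1c)\lambda_{n}\beta_{n}\Psi(X_{n})+c_{1}\lambda_{n}\beta_{n}\norm{\nabla\Psi(X_{n})}^{2}\ge0$ and $\zeta_{n}\eqdef\tfrac{2\lambda_{n}\beta_{n}}{c}h_{C}(cp/\beta_{n})+c_{2}\lambda_{n}^{2}\norm{p}^{2}+c_{2}\lambda_{n}^{3}s_{n}^{2}$ is deterministic and summable by the two hypotheses together with the boundedness of $(\lambda_{n})$ (which follows from $\limsup_{n}\lambda_{n}\beta_{n}<2/L_{\Psi}$ and $\beta_{n}\ge\beta_{0}>0$). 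A Robbins--Siegmund / almost-supermartingale convergence theorem now yields that $\lim_{n}\norm{X_{n}-z}^{2}$ exists $\Pr$-a.s. and $\sum_{n}\Lambda_{n}<\infty$ $\Pr$-a.s. Taking full expectations, the sequence $n\mapsto\Ex[\norm{X_{n}-z}^{2}]+\sum_{k\ge n}\zeta_{k}$ is nonnegative and nonincreasing, so $\lim_{n}\Ex[\norm{X_{n}-z}^{2}]$ exists in $\R$ and $\sum_{n}\Ex[\Lambda_{n}]<\infty$, whence $\Ex[\Lambda_{n}]\to0$; this gives the claimed $\Ex[\norm{X_{n+1}-X_{n}}^{2}]\to0$, $\Ex[\lambda_{n}\beta_{n}\Psi(X_{n})]\to0$ and $\Ex[\lambda_{n}\beta_{n}\norm{\nabla\Psi(X_{n})}^{2}]\to0$. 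Since $\sum_{n}\Lambda_{n}<\infty$ a.s. also forces $\sum_{n}\lambda_{n}\beta_{n}\Psi(X_{n})<\infty$ and $\sum_{n}\lambda_{n}\beta_{n}\norm{\nabla\Psi(X_{n})}^{2}<\infty$ a.s., the hypothesis $\inf_{n}\lambda_{n}\beta_{n}=\delta>0$ lets us divide by $\delta$ to conclude $\Psi(X_{n})\to0$ and $\norm{\nabla\Psi(X_{n})}\to0$ a.s.

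The main obstacle is the cross term $\inner{p,X_{n}-z}$, which, unlike in the unconstrained situation, is not sign-definite because the iterates need not lie in $C$; it cannot simply be dropped. The Attouch--Czarnecki summability condition is exactly what allows the rescaled Fenchel--Young estimate to trade this term for the summable error $\lambda_{n}\beta_{n}h_{C}(cp/\beta_{n})$ plus a controllable multiple of $\lambda_{n}\beta_{n}\Psi(X_{n})$, and the delicate point is the bookkeeping: $c$ must be chosen large enough that the residual coefficient $2(c_{1}-1/c)$ of $\lambda_{n}\beta_{n}\Psi(X_{n})$ stays positive while $\sum_{n}\lambda_{n}\beta_{n}h_{C}(cp/\beta_{n})$ remains finite — which is possible precisely because the summability hypothesis is assumed for every element of the cone $\range(\NC_{C})$. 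The remaining effort is routine: passing from the conditional Robbins--Siegmund conclusion to convergence of the deterministic sequence $\Ex[\norm{X_{n}-z}^{2}]$ and to the almost-sure statements in the third item.
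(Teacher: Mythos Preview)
Your argument is essentially the paper's own: apply Lemma~\ref{lem:discrete2} with $v=-p\in\opA(z)$ for $z\in\scrS$, trade the indefinite cross term $2\lambda_n\inner{p,X_n-z}$ via a rescaled Fenchel--Young inequality for a summable $h_C$-error plus an absorbable multiple of $\lambda_n\beta_n\Psi(X_n)$, and finish with a Robbins--Siegmund recursion. Your use of the conditional (almost-supermartingale) form is a mild refinement that delivers the almost-sure statements of the third bullet directly, whereas the paper takes full expectations and leaves that item implicit.
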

\begin{proof}
Define 
\begin{equation}
    \scrE_{n}(z)\eqdef \norm{X_{n+1}-z}^{2}-\norm{X_{n}-z}^{2}+c_{1}\left(\frac{1}{2}\norm{X_{n+1}-X_{n}}^{2}+\lambda_{n}\beta_{n}\Psi(X_{n})+\lambda_{n}\beta_{n}\norm{\nabla\Psi(X_{n})}^{2}\right),
\end{equation}
where $c_1$ is given by Lemma \ref{lem:discrete2}. Consider $u=v+p\in\opA(z)+\NC_{\setC}(z)$, so that 
\begin{align*}
    2\lambda_{n}\inner{v,z-X_{n}}-c_{1}\lambda_{n}\beta_{n}\Psi(X_{n})&=2\lambda_{n}\inner{u,z-X_{n}}-c_{1}\lambda_{n}\beta_{n}\Psi(X_{n})
    -2\lambda_{n}\inner{p,z-X_{n}}\\
    &=2\lambda_{n}\inner{u,z-X_{n}}+c_{1}\lambda_{n}\beta_{n}\left(\inner{\frac{2p}{c_{1}\beta_{n}},X_{n}}-\support_{C}\left(\frac{2p}{c_{1}\beta_{n}}\right)-\Psi(X_{n})\right)\\
    &\leq 2\lambda_{n}\inner{u,z-X_{n}}+c_{1}\lambda_{n}\beta_{n}\left(\Psi^{\ast}\left(\frac{2p}{c_{1}\beta_{n}}\right)-\support_{C}\left(\frac{2p}{c_{1}\beta_{n}}\right)\right).
\end{align*}
Then, 
\begin{align*}
\scrE_{n}(z)+c_{1}\lambda_{n}\beta_{n}\Psi(X_{n})\leq 2\lambda_{n}\inner{\Delta B_{n+1},z-X_{n}}+2\lambda_{n}\inner{v,z-X_{n}}+c_{2}\lambda_{n}^{2}\left(\norm{v}^{2}+\norm{\Delta B_{n+1}}^{2}\right),
\end{align*}
which allows us to conclude that
\begin{align*}
    \scrE_{n}(z)&\leq c_{2}\lambda_{n}^{2}\left(\norm{v}^{2}+\norm{\Delta B_{n+1}}^{2}\right)+2\lambda_{n}\inner{\Delta B_{n+1},z-X_{n}}\\
    &+2\lambda_{n}\inner{u,z-X_{n}}+c_{1}\lambda_{n}\beta_{n}\left[\Psi^{\ast}\left(\frac{2p}{c_{1}\beta_{n}}\right)-\support_{C}\left(\frac{2p}{c_{1}\beta_{n}}\right)\right].
\end{align*}
Next, take $u=0$, and take expectations on both sides, we are left with 
$$
\Ex[\scrE_{n}(z)]\leq c_{2}\lambda^{2}_{n}\left(\norm{v}^{2}+\lambda_{n}s^{2}_{n}\right)+c_{1}\lambda_{n}\beta_{n}h_{C}\left(\frac{2p}{c_{1}\beta_{n}}\right).
$$
Upon defining the numerical sequences 
\begin{align}
 & \ca_{n}\eqdef \Ex[\norm{X_{n}-z}^{2}],\\ 
   & \cb_{n}\eqdef c_{1}\Ex[\frac{1}{2}\norm{X_{n+1}-X_{n}}^{2}+\lambda_{n}\beta_{n}\Psi(X_{n})+\lambda_{n}\beta_{n}\norm{\nabla\Psi(X_{n})}^{2}],\\
&\ce_{n}\eqdef c_{2}\lambda^{2}_{n}\left(\norm{v}^{2}+\lambda_{n}s^{2}_{n}\right)+c_{1}\lambda_{n}\beta_{n}h_{C}\left(\frac{2p}{c_{1}\beta_{n}}\right),
\end{align}
we arrive at the recursion 
$$
\ca_{n+1}-\ca_{n}+\cb_{n}\leq\ce_{n} 
$$
with $\ca_{n}\geq 0$ and $\ce_{n}\in\ell^{1}_{+}(\N)$. It follows $\lim_{n\to\infty}\ca_{n}\in\R$ and $(\cb_{n})_{n}\in\ell^{1}_{+}(\N)$. 
\end{proof}

\section{Numerical Experiments}\label{sec:numerics}
%

In this section we report results from some numerical experiments conducted in order to illustrate the performance of the numerical scheme derived from our continuous time method . Our aim is not to compare our algorithm with existing methods, but rather to show its numerical performance. 

\subsection{Basis pursuit}
The basis pursuit denoising problem is to filter out an unknown signal $x^{\rm true}$ from noisy observation $y$ generated by the linear model $y=Ax+\eps$. $A=\begin{pmatrix} a_{1}^{\top} \\ \vdots \\ a_{m}^{\top}\end{pmatrix}$ is a given $m\times d$ matrix, where $m$ is usually much smaller than $d$. Hence, the linear system $A^{\top}Ax=A^{\top}y$ is underdetermined. The problem is to recover a sparse signal, by considering the convex optimization problem 
\begin{equation}\label{eq:Test1}
\min \norm{x}_{1} \quad \text{s.t.: } x\in\argmin_{x'\in\R^{d}}\frac{1}{2}\norm{Ax'-y}^{2}_{2}
\end{equation}
This problem falls within the framework of this paper, by identifying $\varphi(x)=\norm{x}_{1}$ and $\Psi(x)=\frac{1}{2}\norm{Ax-y}^{2}_{2}.$ We compute $\nabla\Psi(x)=A^{\top}(Ax-y)=\sum_{j=1}^{m}A_{j}(Ax-y)_{j}$. In many cases, we only have access to subsamples of the gradient of the penalty function. We assume that subsamples result in i.i.d subsets $\scrB\subset[m]$, so that at every iteration of the discrete-time scheme, we observe the sketched gradient 
$$
\sum_{j\in\scrB_{k}}a_{j}(AX_{n}-y)_{j}+\sigma_{n}\xi_{n+1}\quad \xi_{n+1}\sim N_{m}(0,\mathbf{I}_{d}). 
$$
In the first experiment, we have random generated a matrix with $m=40$ and $n=100$. The stochastic gradient estimator of the penalty function is constructed by drawing iid batches of size 4 uniformly at random. The evolution of the signal reconstruction process is depicted in Figure \ref{fig:BPD}.
\begin{figure}[h!]
\centering
\includegraphics[width=\textwidth]{./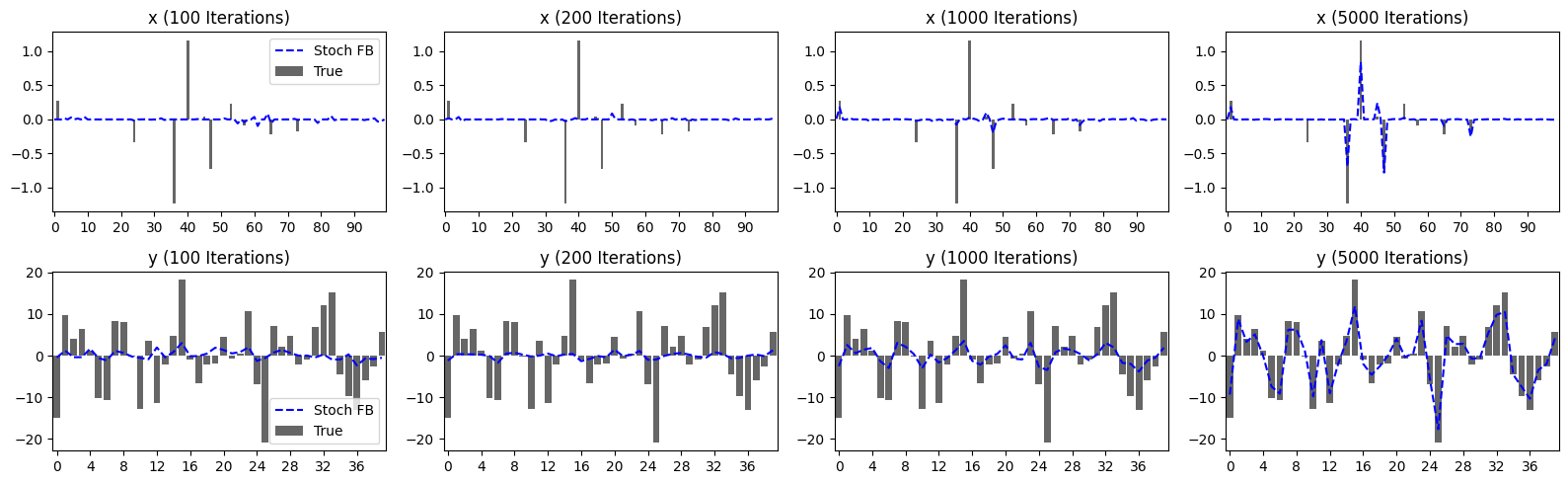}
\caption{Reconstruction of the ground truth based on \eqref{eq:scheme} (left panel) and reconstruction of the signal (right panel), after $100,200,1000$ and $5000$ iterations.}
\label{fig:BPD}
\end{figure}
For the conducted numerical experiments, we have taken batch sizes $\abs{\scrB_{k}}=4$. We choose the penalty parameter $\beta_{k}=\frac{(k+10)^{0.75}}{L_{\Psi}}$ and $\lambda_{k}\beta_{k}=\frac{1}{L_{\Psi}},$ where $L_{\Psi}=\norm{A}_{\rm F}$. 

\subsection{Linear inverse problem with simulated data}
We consider the following hierarchical minimisation problem 
\begin{equation}\label{eq:Test2}
\min_{x\in\R^{d}}\varphi(x)\eqdef \norm{x-\hat{x}}_{1}\quad \text{s.t.: }x\in\argmin_{x'\in\R^{d}}\Psi(x')
\end{equation}
where $\Psi:\R^{d}\to\R$ is the quadratic function defined by 
$$
\Psi(x)\eqdef\frac{1}{2}x^{\top}Ax+b^{\top}x=\frac{1}{2}(x_{1}-1)^{2}+\frac{1}{2}\sum_{j=2}^{J}(x_{j-1}-x_{j})^{2}
$$
and reference point $\hat{x}=50\1_{d}$ and $J<d$. The feasible set implied by this bilevel problem is 
$$
\setC=\{x\in\R^{d}\vert x_{i}=1,1\leq i\leq J\}=\argmin\Psi=\Psi^{-1}(0). 
$$
Note that the penalty function is not strongly convex, but satisfies the Hölderian error bound from Definition \ref{def:EB}, with $\rho=2$. It can then be readily checked that the global solution of the hierarchical minimization problem \eqref{eq:Test2} is given by $\argmin_{x\in\setC}\varphi(x)=\{\sum_{i=1}^{J}e_{i}+50\sum_{i=J+1}^{d}e_{i}\}=\{x^{*}\}$. 

\begin{figure}[h!]
\centering
\includegraphics[width=\textwidth]{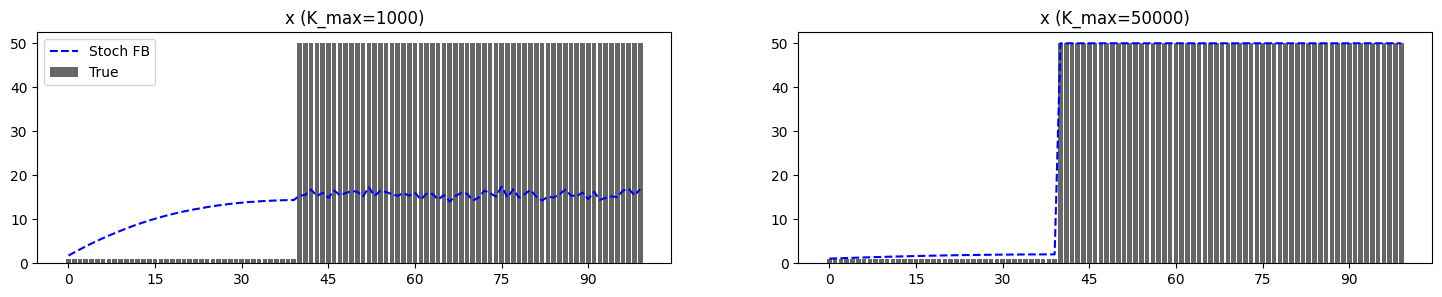}
\caption{Reconstruction of the signal $x^{*}$ \eqref{eq:Test2} after 1000 iterations (left panel) and 50000 iterations (right panel).}
\label{fig:Bilevel}
\end{figure}

\section{Conclusion and perspectives}
\label{sec:conclusion}
%

The paper studies a dynamical system approach to non-smooth convex constrained optimization adopting an exterior penalty formulation. Under the familiar Attouch-Czarnecki condition, we prove asymptotic convergence of the sample paths and the Cesaro mean for maximal monotone inclusions. In the special case where the monotone operator is the subgradient of a proper lower semi-continuous convex function, we derive concrete energy bounds on the objective function values. Asymptotic feasibility of the trajectory is also demonstrated. There are several interesting directions left for future research. 

\paragraph{Invariant measures}
Consider the case $m=d$ and $\sigma(t,x)=\Id$ for all $(t,x)\in[0,\infty)\times\R^{d}$. Additionally, we assume that $\beta(t)=\bar{\beta}$ (constant penalization). Let $b(x)=\bar{\beta}\nabla\Psi(x)$. This setting essentially corresponds to the case in which approximate solutions to the generalized variational inequality \eqref{eq:MI} are requested over finite time. In terms of the stochastic dynamical systems, the assumptions thus made imply that we consider the SDI
\begin{equation}\label{eq:stationary}
\left\{\begin{split}
&\dif X(t)+\opA(X(t))\dif t+b(X(t))\dif t\ni \dif W(t) \\ 
&X(0)=x\in\dom(\opA)
\end{split}\right.
\end{equation}
 This is the setting of \cite{barbu2005neumann,BarbuPrato08}, where the special case with $\opA=\NC_{K}$ for a closed convex set $K\subset\R^{d}$ with $C^{2}$ boundary $\bd(K)$ and nonempty interior $\Int(K)$ is considered. For this data constellation, the random dynamical system \eqref{eq:stationary} admits a unique strong solution $X^{x}(t)$ with transition semi-group $(P_{t})_{t\geq 0}$, satisfying 
 $$
 (P_{t}f)(x)=\Ex[f(X^{x}(t))]\qquad\forall f\in \bC(K),x\in\K,t\geq 0.
 $$
 \cite{BarbuPrato08} characterize the infinitesimal generator of the process as 
 \begin{align*}
 &(\scrG f)(x)=\frac{1}{2}\Delta f(x)-\inner{b(x),\nabla f(x)}\quad f\in\dom(\scrG),x\in K,\\
 &\dom(\scrG)=\{f\in\bC^{2}(\R^{d})\vert \frac{\partial f}{\partial n}=0\text{ on }\bd(K)\},
 \end{align*}
 with $\frac{\partial}{\partial n}$ the outward normal derivative. It is also shown that the semi-group $(P_{t})_{t\geq 0}$ defines an irreducible strong Feller process with a unique invariant measure $\nu$, concentrated on $K$, which is constructively obtained as the weak limit (in the topology of weak convergence on measures) of a Yosida-regularization of the data defining the drift terms $(\opA,b)$. Moreover, assuming that $b:\R^{d}\to\R^{d}$ is a $\mu$-strongly monotone mapping, Proposition 4.2 of \cite{BarbuPrato08} establishes a log-Sobolev inequality on the invariant measure, reading as 
 $$
 \int_{\R^{d}}f^{2}(x)\log(f^{2}(x))\dif \nu(x)\leq \frac{1}{\mu}\int_{\R^{d}}\norm{\nabla f(x)}^{2}\dif\nu(x)+\norm{f}_{L^{2}(\R^{d},\nu)}^{2}\log(\norm{f}^{2}_{L^{2}(\R^{d},\nu)}),
$$
for any continuously differentiable function $f:\R^{d}\to\R$ with bounded domain. Translating these results to our penalty framework, the drift $b=\bar{\beta}\nabla\Psi$ is strongly monotone whenever $\Psi$ is a strongly convex function. This is clearly satisfied for the important case where $\Psi(x)=\frac{1}{2}\norm{Ax-b}^{2}$, with $A^{\top}A$ positive definite. Hence, the invariant measure $\nu$ implicitly depends on the penalty parameter $\bar{\beta}$. Treating now the penalty parameter $\\bar{\beta}$ as a hyperparameter, we obtain a sequence of measures $(\nu^{\bar{\beta}})_{\bar{\beta}\geq 0}$, whose asymptotic properties (i.e. when $\bar{\beta}\to\infty$) should capture the limiting invariant distribution of the time-dependent process $X$ analyzed in the main part of this paper. It is an interesting open question to investigate in detail the limiting behavior of the family $(\nu^{\bar{\beta}})_{\bar{\beta}\geq 0}$ as $\bar{\beta}\to\infty.$

\paragraph{Optimal Control for stochastic differential equations with reflections}
\cite{barbu2020optimal} constructs feedback controls for stochastic variational inequalities of the form 
\begin{equation}\label{eq:Control}
\left\{\begin{split}
&\dif X(t)+\NC_{K}(X(t))\dif t+b(X(t))\dif t\ni u(t)\dif t +\dif W(t) \\ 
&X(0)=x\in K
\end{split}\right.
\end{equation}
where all the data are the same as in \eqref{eq:stationary}, but an additive external input (control) is added to the drift of the dynamics, in terms of a bounded measurable function $u(t)=\pi(X(t))$ (feedback control). Our analysis therefore can be directly adapted to perform a stability analysis of the controlled stochastic process $X$, for a given feedback law. 


\section*{Acknowledgements}

This work was supported by a CSC scholarship and the FMJH Program Gaspard Monge for optimization and operations research and their interactions with data science. M. Staudigl acknowledges support from the Deutsche Forschungsgemeinschaft (DFG) - Projektnummer 556222748 ("Non-stationary hierarchical optimization").

\begin{appendix}

\section{Bounded Variation Functions}
\label{app:BV}
%

We denote by $\bC([a,b];\R^{d})$ the space of continuous functions $f:[a,b]\subset\R\to\R^{d}$. This is a separable Banach space with norm $\norm{f}_{\bC([a,b];\R^{d})}\eqdef\sup_{a\leq t\leq b}\norm{f(t)}$.\\

Let $[a,b]$ be a closed interval from $\R$ and $\scrD_{[a,b]}$ be the set of all partitions $\Delta=\{t_{0}=a,t_{1},\ldots,t_{n}=b\}, n\in\N^{\ast}$. Define the width $\abs{\Delta}=\sup_{0\leq i\leq n-1}\abs{t_{i+1}-t_{i}}$. Given a function $f:[a,b]\to\R^{d}$, we define the variation of $k$ over $[a,b]$ as 
$$
V_{\Delta}(f)=\sum_{i=0}^{n-1}\norm{f(t_{i+1})-f(t_{i})}.
$$
The total variation of $k$ on $[a,b]$ is defined by 
$$
\nu_{a}^{b}(f)\eqdef \sup_{\Delta\in\scrD_{[a,b]}}V_{\Delta}(f)
$$
If $a=0$ und $b=T$, we write $\nu_{T}(f)=\nu_{0}^{T}(f)$. A function $f:[a.b]\to\R^{d}$ has bounded variation on $[a,b]$ if $\nu_{a}^{b}(f)<\infty$. The space of bounded variation function on $[a,b]$ is denoted by $\BV([a,b];\R^{d})$.

If $x(\cdot)\in\bC([0,T];\R^{d})$ and $k\in\BV([0,T];\R^{d})$, then the Riemann-Stieltjes integral is defined by 
$$
\int_{a}^{b}\inner{x(t),\dif k(t)}=\lim_{\abs{\Delta}\to 0}\sum_{i=0}^{n_{\Delta}-1}\inner{x(t_{i}),k(t_{i+1})-k(t_{i})}. 
$$
Equipped with the norm 
$$
\norm{f}_{\BV([a,b]:\R^{d}}\eqdef \norm{f}_{\bC([a,b];\R^{d})}+\nu_{a}^{b}(f) 
$$
the space $\BV([a,b];\R^{d})$ is a Banach space. An element $k\in\BV([a,b];\R^{d})$ can be identified with the continuous linear functional 
$$
x\in\bC([a,b];\R^{d})\mapsto \inner{x(a),k(a)}+\int_{a}^{b}\inner{x(t),\dif k(t)}
$$
 
\section{Existence and Uniqueness of Solutions}
\label{app:existence}
%

We verify existence and uniqueness of solutions for the stochastic differential inclusion \eqref{eq:SDIMMO}. We achieve this by verifying the assumptions leading to the corresponding existence and uniqueness theorems in \cite{pardoux2014stochastic}. We start with the most restrictive assumption, which essentially precludes evolution equations in infinite dimensional Hilbert spaces in the general maximal monotone case.  

\begin{assumption}\label{ass:interior}
$\scrC\cap\Int(\dom(\opA))\neq\emptyset$. 
\end{assumption}
Note that in case where $\opA=\partial\varphi$ for $\varphi\in\Gamma_{0}(\R^{d})$, the $\Int(\dom(\partial\varphi))=\Int(\dom(\varphi))$. Assumption \ref{ass:interior} implies that for all $u_{0}\in\Int(\dom(\opA))$ there exists an $r_{0}>0$ such that $\B(u_{0},r_{0})\subset\dom(\opA)$ and 
$$
\opA^{\#}_{u_{0},r_{0}}\eqdef \sup\{\norm{x}\;\vert x\in\opA(u_{0}+v),\norm{v}\leq r_{0}\}<\infty.
$$
By Assumption \ref{ass:Psi}, we have the following anti-monotonicity condition 
\begin{equation}\label{eq:AntiMonotone}
\inner{x-y,\nabla\Psi(y)-\nabla\Psi(x)}\leq L_{\Psi}\norm{x-y}^{2}.
\end{equation}
Furthermore, Assumption \ref{ass:Noisebound} is in place in the regime (ASN). Note that the condition \eqref{eq:AntiMonotone} implies for $u_{0}\in\scrC\cap\Int(\dom(\opA))$ the growth condition
$$
\inner{-\beta(t)\Psi(x),x-u_{0}}\leq \beta(t)L_{\Psi}\norm{x-u_{0}}^{2}\equiv \mu(t)\norm{x-u_{0}}^{2}.
$$ 
Proposition 6.19 from \cite{pardoux2014stochastic} yields the a-priori bound for every pair $(X,K)$ satisfying $\dif K(t)\in\opA(X(t))$ $(t,\omega)$-a,e, 
$$
r_{0}\dif\nu_{t}(K)\leq \inner{X_{t}-u_{0},\dif K(t)}+\left(\opA^{\#}_{u_{0},r_{0}}\norm{X(t)-u_{0}}+\opA^{\#}_{u_{0},r_{0}}\right)\dif t
$$
as signed measures. Since $\nabla\Psi(u_{0})=0$, we can rearrange this inequality to become
$$
r_{0}\dif\nu_{t}(K)+\inner{X(t)-u_{0},-\beta(t)\nabla\Psi(X(t))}\dif t\leq \inner{X(t)-u_{0},\dif K(t)}+\left(\opA^{\#}_{u_{0},r_{0}}\norm{X(t)-u_{0}}+\opA^{\#}_{u_{0},r_{0}}\right)\dif t
$$
By the Lipschitz continuity of the variance $\sigma(t,x)$ \eqref{eq:noiseLip}, we have for all $\lambda>1$ 
$$
\norm{\sigma(t,x)}_{\rm F}^{2}\leq \frac{\lambda}{\lambda-1}\norm{\sigma(t,u_{0})}^{2}_{\rm F}+\lambda\ell^{2}(t)\norm{x-u_{0}}^{2}.
$$
Some simple algebra leads us then to the comparison
\begin{align*}
&r_{0}\dif\nu_{t}(K)-\inner{X_{t}-u_{0},\dif K(t)}-\beta(t)\inner{X(t)-u_{0},\nabla\Psi(X(t))}\dif t +\left(\frac{p-1}{2}+\opA^{\#}_{u_{0},r_{0}}\right)\norm{\sigma(t,X(t))}^{2}_{\rm F}\\
&\leq \norm{X(t)-u_{0}}\opA^{\#}_{u_{0},r_{0}}\dif t+\left( \opA^{\#}_{u_{0},r_{0}}+\left(\frac{p-1}{2}+9p\lambda\right)\norm{\sigma(t,u_{0})}^{2}_{\rm F}\right)\dif t+\lambda\left(\frac{p-1}{2}+9p\lambda\right)\ell^{2}(t)\norm{X(t)-u_{0}}^{2}\dif t
\end{align*}
for $p>1$. We define the following processes:
\begin{align*}
&D(t)\eqdef r_{0}\nu(K)_{t},\\ 
&R(t)\eqdef \int_{0}^{t}\left( \opA^{\#}_{u_{0},r_{0}}+\left(\frac{p-1}{2}+9p\lambda\right)\norm{\sigma(t,u_{0})}^{2}_{\rm F}\right)\dif t\\
&N(t)\eqdef \opA^{\#}_{u_{0},r_{0}}t,\\
&V(t)\eqdef \int_{0}^{t}\lambda\left(\frac{p-1}{2}+9p\lambda\right)\ell^{2}(t)\dif t.
\end{align*}
Using this notation, we can rewrite the above comparison of signed measures as 
\begin{align*}
&\dif D(t)-\inner{X_{t}-u_{0},\dif K(t)}-\beta(t)\inner{X(t)-u_{0},\nabla\Psi(X(t))}\dif t +\left(\frac{p-1}{2}+\opA^{\#}_{u_{0},r_{0}}\right)\norm{\sigma(t,X(t))}^{2}_{\rm F}\\
&\leq \norm{X(t)-u_{0}}\dif N(t)+\dif R(t)+\norm{X(t)-u_{0}}^{2}\dif V(t).
\end{align*}
This is the main bound involved in the existence (Theorem 4.11) and uniqueness (Theorem 4.18) statements in \cite{pardoux2014stochastic}, slightly simplified due to the fact that the anchor point $u_{0}$ induces a zero gradient of the penalty function. Our existence and uniqueness theorem therefore follows from the just mentioned results.

\section{Auxiliary facts and omitted proofs}
\label{app:proofs}
%

We need the following continuous-time version of the celebrated Robbins-Siegmund lemma. 
\begin{proposition}\label{prop:RS}
Let $(A_{t})_{t\geq 0}$ and $(U_{t})_{t\geq 0}$ be two continuous adapted increasing processes with $A_{0}=U_{0}=0$ a.s. Let $(M_{t})_{t\geq 0}$ be a real-valued continuous local martingale with $M_{0}=0$ a.s. Let $\xi$ be a non-negative $\scrF_{0}$-measurable random variable. Define 
$$
X_{t}=\xi+A_{t}-U_{t}+M_{t}\quad t\geq 0.
$$
If $X_{t}$ is non-negative and $\lim_{t\to\infty}A_{t}<\infty$, then $\lim_{t\to\infty}X_{t}$ exists and is finite, and $\lim_{t\to\infty}U_{t}<\infty$. 
\end{proposition}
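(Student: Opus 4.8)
The plan is to recognise this as a continuous-time Robbins--Siegmund lemma and reduce it to the almost-sure convergence theorem for non-negative supermartingales. Since each event $\{\xi\le m\}$ lies in $\scrF_0$ and $\bigcup_m\{\xi\le m\}$ has full measure, conditioning the stochastic basis on these $\scrF_0$-measurable events --- which preserves the local-martingale property of $M$ together with all the adaptedness, continuity and monotonicity hypotheses --- lets me assume without loss of generality that $\xi$ is bounded, in particular integrable. I then work on the full-measure event $B\eqdef\{\lim_{t\to\infty}A_t<\infty\}$ and introduce the continuous process $\tilde N_t\eqdef X_t-A_t=\xi-U_t+M_t$. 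Because $A_t\uparrow A_\infty<\infty$ on $B$, it suffices to show that $\tilde N_t$ converges to a finite limit a.s.\ on $B$ and that $\lim_{t\to\infty}U_t<\infty$ a.s.

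The heart of the argument is a double localisation. For $k,j\in\N$ put $\sigma_k\eqdef\inf\{t\ge0:A_t\ge k\}$ and $\rho_j\eqdef\inf\{t\ge0:\abs{M_t}\ge j\}$; these are stopping times, $\rho_j\uparrow\infty$ a.s.\ by continuity of $M$, and on $B$ one has $\sigma_k=\infty$ as soon as $k>A_\infty$. The key point is that on the stochastic interval $[0,\sigma_k\wedge\rho_j]$ the bounds $A\le k$, $\abs{M}\le j$ and $X\ge0$ force $0\le U_{t\wedge\sigma_k\wedge\rho_j}\le\xi+k+j$, so the stopped increasing process is integrable. Consequently
$$
\tilde N_{t\wedge\sigma_k\wedge\rho_j}+k=\bigl(\xi+k+M_{t\wedge\sigma_k\wedge\rho_j}\bigr)-U_{t\wedge\sigma_k\wedge\rho_j}
$$
is a non-negative supermartingale --- a bounded martingale minus an integrable increasing process --- whose expected total decrease is uniformly controlled: taking expectations gives $\Ex\bigl[U_{t\wedge\sigma_k\wedge\rho_j}\bigr]=\Ex[\xi+k]-\Ex\bigl[\tilde N_{t\wedge\sigma_k\wedge\rho_j}+k\bigr]\le\Ex[\xi+k]$ for all $t$ and $j$. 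Letting $j\to\infty$ and invoking conditional Fatou together with the continuity of $\tilde N$ upgrades $\tilde N^{\,\sigma_k}+k$ itself to a non-negative supermartingale, so by Doob's convergence theorem $\tilde N^{\,\sigma_k}_\infty\eqdef\lim_{t\to\infty}\tilde N_{t\wedge\sigma_k}$ exists and is finite a.s.

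To conclude I would patch over $k$: on the full-measure set $B\cap\bigcap_k\{\tilde N^{\,\sigma_k}_\infty\in\R\}$, picking any $k>A_\infty(\omega)$ gives $\sigma_k(\omega)=\infty$, hence $\tilde N_t(\omega)=\tilde N_{t\wedge\sigma_k}(\omega)$ converges to a finite limit $\tilde N_\infty(\omega)$, and therefore $X_t=A_t+\tilde N_t\to A_\infty+\tilde N_\infty$, which is finite; undoing the conditioning on $\{\xi\le m\}$ and taking the union over $m$ proves the first claim. For finiteness of $U_\infty$, monotone convergence in $t$ applied to the uniform bound above yields $\Ex\bigl[\lim_{t\to\infty}U_{t\wedge\sigma_k\wedge\rho_j}\bigr]\le\Ex[\xi+k]$, and then monotone convergence in $j$ (using $\sigma_k\wedge\rho_j\uparrow\sigma_k$) gives $\Ex\bigl[\lim_{t\to\infty}U_{t\wedge\sigma_k}\bigr]\le\Ex[\xi+k]<\infty$; thus $\lim_{t\to\infty}U_{t\wedge\sigma_k}<\infty$ a.s., which on $\{\sigma_k=\infty\}$ is exactly $U_\infty<\infty$, and the union over $k$ exhausts $B$.

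The conceptual content --- that $X-A$ is essentially a non-negative supermartingale --- is immediate; I expect the real work to be the integrability bookkeeping. Since $\tilde N$ is only a \emph{local} supermartingale, $\xi$ is a priori merely non-negative, and $M$ is only a local martingale, one must be careful to (i) reduce to bounded $\xi$, (ii) choose the double localisation $\sigma_k\wedge\rho_j$ so that the non-negativity of $X$ makes $U$ automatically integrable on the stopped interval, and (iii) pass to the limit ($j\to\infty$, then $k\to\infty$) using only Fatou and monotone convergence, never assuming more integrability than is actually available.
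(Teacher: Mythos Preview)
The paper does not prove this proposition; it is stated in Appendix~\ref{app:proofs} as a known continuous-time Robbins--Siegmund lemma and used as a black box. So there is no ``paper's own proof'' to compare against.

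Your argument is correct and follows the standard route one would expect: localise so that the local martingale $M$ becomes a bounded true martingale and the increasing process $U$ becomes integrable, observe that on the stopped interval $X_t-A_t+k=\tilde N_{t\wedge\sigma_k\wedge\rho_j}+k$ is a non-negative supermartingale, pass $j\to\infty$ via conditional Fatou to make $\tilde N^{\sigma_k}+k$ itself a non-negative supermartingale, apply Doob's supermartingale convergence theorem, and finally patch over $k$ using that $\sigma_k=\infty$ eventually on $\{A_\infty<\infty\}$. The reduction to bounded $\xi$ by conditioning on the $\scrF_0$-events $\{\xi\le m\}$ is legitimate---since these events lie in every $\scrF_s$, the martingale and local-martingale properties survive---though you could equivalently absorb it into the localisation by adjoining the $\{0,\infty\}$-valued stopping time $\tau_m\eqdef\infty\cdot\mathds{1}_{\{\xi\le m\}}$. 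The bookkeeping for $U_\infty<\infty$ via the uniform bound $\Ex[U_{t\wedge\sigma_k\wedge\rho_j}]\le\Ex[\xi+k]$ and two applications of monotone convergence is clean and correct.
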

The following result is due to \cite{combettes2015stochastic} (Proposition 2.3(iii),(iv)).
\begin{proposition}\label{prop:CP}
Let $\scrS$ be a non-empty closed subset of $\R^{d}$ and $(X_{n})_{n}$ a stochastic process living in $\R^{d}$. Suppose that there exists $\Omega_{0}\subset\Omega$ with $\Pr(\Omega_{0})=1$ and, for every $\omega\in\Omega_{0}$, and every $z\in\scrS$, the process $(\norm{X_{n}(\omega)-z})_{n\in\N}$ converges. If additionally all accumulation points of $(X_{n}(\omega))_{n\in\N},\omega\in\Omega_{0},$ are contained in $\scrS$, then $(X_{n})_{n\in\N}$ converges $\Pr$-a.s. to a $\scrS$-valued random variable.
\end{proposition}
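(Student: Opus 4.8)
The plan is to reduce the statement to a pathwise (deterministic) Opial-type argument carried out for each fixed $\omega$ in the full-measure event $\Omega_{0}$, followed by a short measurability remark to identify the limit as an $\scrS$-valued random variable. First I would fix $\omega\in\Omega_{0}$ and regard $(X_{n}(\omega))_{n\in\N}$ as a deterministic sequence in $\R^{d}$; since $\scrS\neq\emptyset$, picking any $z_{0}\in\scrS$ the first hypothesis gives that $(\norm{X_{n}(\omega)-z_{0}})_{n}$ converges, hence is bounded, so $(X_{n}(\omega))_{n}$ is bounded and, by Bolzano--Weierstrass, has at least one accumulation point; by the second hypothesis every accumulation point lies in $\scrS$.

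Next I would establish uniqueness of the accumulation point. Let $x^{\star}$ be an accumulation point, so $x^{\star}\in\scrS$ and $X_{n_{k}}(\omega)\to x^{\star}$ along some subsequence. Because $x^{\star}\in\scrS$, the full limit $\ell\eqdef\lim_{n\to\infty}\norm{X_{n}(\omega)-x^{\star}}$ exists by hypothesis; evaluating it along the subsequence forces $\ell=\lim_{k\to\infty}\norm{X_{n_{k}}(\omega)-x^{\star}}=0$. Hence $X_{n}(\omega)\to x^{\star}$, so the sequence has a unique accumulation point and converges to a point of $\scrS$.

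Finally I would set $X_{\infty}(\omega)\eqdef\lim_{n\to\infty}X_{n}(\omega)$ for $\omega\in\Omega_{0}$ and $X_{\infty}(\omega)\eqdef 0$ otherwise; as an a.s.\ pointwise limit of the measurable maps $X_{n}$ (working with the completed $\sigma$-algebra so that $\Omega_{0}$ and the limit are measurable) this is a genuine random variable, it is $\scrS$-valued because its value at each $\omega\in\Omega_{0}$ is an accumulation point of $(X_{n}(\omega))_{n}$ and hence lies in $\scrS$, and by construction $X_{n}\to X_{\infty}$ $\Pr$-a.s. I do not anticipate a genuine obstacle: the mathematical content is entirely the elementary fact that a bounded sequence all of whose cluster points $z$ satisfy ``$\norm{x_{n}-z}\to\ell_{z}$ for some $\ell_{z}$'' must have exactly one cluster point, and the only point needing a word of care is the measurability of $X_{\infty}$, handled by passage to the completion; alternatively one may simply invoke Proposition 2.3(iii)--(iv) of \cite{combettes2015stochastic}.
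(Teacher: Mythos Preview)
Your argument is correct and is precisely the standard Opial-type proof; the paper does not give its own proof of this proposition but simply cites \cite{combettes2015stochastic} (Proposition~2.3(iii)--(iv)), which you yourself invoke as an alternative. One cosmetic point: defining $X_{\infty}(\omega)\eqdef 0$ on $\Omega_{0}^{c}$ may fail to land in $\scrS$; picking any fixed $z_{0}\in\scrS$ there (or simply reading ``$\scrS$-valued'' as ``$\scrS$-valued $\Pr$-a.s.'') resolves this.
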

For every locally integrable curve $\bx:\R_{\geq 0}\to\R^{d}$, we define its average 
$$
\Avg(\bx;s,t)\eqdef\frac{1}{t-s}\int_{s}^{t}\bx(r)\dif r\quad \text{for all }t\geq s\geq 0. 
$$
The following Opial-like Lemma is reported as Lemma 2.3 in \cite{attouch2018asymptotic}. Associated with a continuous function $\bx\in\bC([0,\infty);\R^{d})$, its omega limit set is defined as 
$$
\Lim(\bx)\eqdef\{z\in\R^{d}\vert \exists (t_{k})\subseteq(0,\infty)\text{ with }t_{k}\uparrow \infty \text{ and }\bx(t_{k})\to z \text{ as }k\to\infty\}.
$$

\begin{lemma}
    Consider $\bx\in\bC(\R_{\geq 0};\R^{d})$ and $\scrS\neq\emptyset$. Suppose that $\Lim(\bx)\subset\scrS$, and for all $x^{*}\in\scrS$ the limit $\lim_{t\to\infty}\norm{\bx(t)-x^{*}}$ exists. Then there exists $x^{*}\in\scrS$ such that $\lim_{t\to\infty}\Avg(\bx;t_{0},t)=x^{\ast}$. 
\end{lemma}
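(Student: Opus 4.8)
The plan is to recover convergence of the whole curve $\bx$ via the classical Opial argument, and then to observe that the Cesàro average automatically inherits the same limit. So the statement about $\Avg(\bx;t_{0},t)$ will drop out once the trajectory itself is shown to converge.

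First I would extract boundedness: since $\scrS\neq\emptyset$, fix any $z\in\scrS$; by hypothesis $\lim_{t\to\infty}\norm{\bx(t)-z}$ exists in $\R$, hence $t\mapsto\norm{\bx(t)-z}$ is bounded and therefore so is $\bx$. Consequently $\Lim(\bx)\neq\emptyset$: take any $t_{k}\uparrow\infty$, extract a convergent subsequence of $\bx(t_{k})$, and its limit lies in $\Lim(\bx)$. The heart of the proof is then to show $\Lim(\bx)$ is a singleton. Let $z_{1},z_{2}\in\Lim(\bx)$; by the inclusion hypothesis $z_{1},z_{2}\in\scrS$, so $\lim_{t\to\infty}\norm{\bx(t)-z_{i}}$ exists for $i=1,2$. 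From the identity
$$
\norm{\bx(t)-z_{1}}^{2}-\norm{\bx(t)-z_{2}}^{2}=2\inner{\bx(t),z_{2}-z_{1}}+\norm{z_{1}}^{2}-\norm{z_{2}}^{2},
$$
the left side converges, so $t\mapsto\inner{\bx(t),z_{2}-z_{1}}$ has a limit $L\in\R$. Evaluating $L$ along a sequence with $\bx(t_{k})\to z_{1}$ gives $L=\inner{z_{1},z_{2}-z_{1}}$, and along a sequence with $\bx(s_{k})\to z_{2}$ gives $L=\inner{z_{2},z_{2}-z_{1}}$; subtracting yields $\norm{z_{2}-z_{1}}^{2}=0$, i.e. $z_{1}=z_{2}$. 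Hence $\Lim(\bx)=\{x^{\ast}\}$ for a single $x^{\ast}\in\scrS$.

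Since $\bx$ is bounded and its $\omega$-limit set is the single point $x^{\ast}$, the whole curve converges: otherwise there would be $\eps>0$ and $t_{k}\uparrow\infty$ with $\norm{\bx(t_{k})-x^{\ast}}\geq\eps$, and by boundedness a further subsequence converging to some $z\neq x^{\ast}$, contradicting $\Lim(\bx)=\{x^{\ast}\}$. Finally I would transfer this to the average: given $\eps>0$, choose $T>t_{0}$ with $\norm{\bx(r)-x^{\ast}}<\eps$ for all $r\geq T$; then
$$
\norm{\Avg(\bx;t_{0},t)-x^{\ast}}\leq\frac{1}{t-t_{0}}\int_{t_{0}}^{T}\norm{\bx(r)-x^{\ast}}\dif r+\frac{t-T}{t-t_{0}}\,\eps,
$$
whose $\limsup$ as $t\to\infty$ is at most $\eps$. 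As $\eps>0$ is arbitrary, $\lim_{t\to\infty}\Avg(\bx;t_{0},t)=x^{\ast}\in\scrS$, which is the claim.

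\textbf{Main obstacle.} There is no genuine obstacle here beyond the standard Opial computation; the only point demanding any care is guaranteeing $\Lim(\bx)\neq\emptyset$, which is why boundedness of $\bx$ — obtained from the existence of $\lim_{t\to\infty}\norm{\bx(t)-z}$ for one fixed $z\in\scrS$ — must be isolated before running the uniqueness-of-cluster-points argument.
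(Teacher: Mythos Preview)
Your argument is correct; in fact you prove the stronger statement that the full trajectory $\bx(t)$ converges to some $x^{\ast}\in\scrS$, from which the Ces\`aro average convergence is immediate. The paper does not supply its own proof of this lemma: it simply cites it as Lemma~2.3 of \cite{attouch2018asymptotic}, so there is nothing further to compare against.
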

Applying this result to the realization of a continuous stochastic process $t\mapsto X(t,\omega)$, we obtain immediately a measurable convergence result of the averaged trajectory. Given a stochastic process $X:\R_{\geq 0}\times\Omega\to\R^{d}$ with continuous sample paths $t\mapsto X_{t}(\omega)$, denote by 
$$
\bar{X}^{t_{0}}_{t}(\omega)\eqdef \Avg(X(\bullet,\omega);t_{0},t).
$$

\begin{corollary}\label{cor:CP}
Let $\scrS\neq\emptyset$ and $X:\R_{\geq 0}\times\Omega\to\R^{d}$ be a continuous stochastic process living in $\R^{d}$. Suppose that there exists $\Omega_{0}\subset\Omega$ with $\Pr(\Omega_{0})=1$ and, for every $\omega\in\Omega_{0}$, and every $x^{*}\in\scrS$, the sequence $(\norm{\bar{X}^{t_{0}}_{t}(\omega)-z})_{t\geq t_{0}}$ converges. If additionally all accumulation points of $(\bar X^{t_{0}}_{t}(\omega))_{t\geq t_{0}},\omega\in\Omega_{0},$ are contained in $\scrS$, then there exists an $\scrS$-valued random variable $X_{\infty}\in L^{0}(\Omega,\R^{d})$ such that $\lim_{t\to\infty}\bar{X}^{t_{0}}_{t}(\omega)=X_{\infty}(\omega)$ almost surely. 
\end{corollary}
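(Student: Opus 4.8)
The plan is to run the classical Opial/Fejér argument pathwise and then upgrade the resulting pointwise limit to a measurable random variable. First I would fix $\omega\in\Omega_{0}$. Since $r\mapsto X(r,\omega)$ is continuous, the running average $s\mapsto\bar{X}^{t_{0}}_{s}(\omega)=\Avg(X(\bullet,\omega);t_{0},s)$ is continuous on $(t_{0},\infty)$, and by the first hypothesis the scalar curve $s\mapsto\norm{\bar{X}^{t_{0}}_{s}(\omega)-z}$ converges for every $z\in\scrS$; in particular $\{\bar{X}^{t_{0}}_{s}(\omega):s\geq t_{0}\}$ is bounded, hence it has at least one accumulation point as $s\to\infty$, and by the second hypothesis every such accumulation point lies in $\scrS$.

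Next I would show that this accumulation point is unique. Let $z_{1},z_{2}$ be two accumulation points; both belong to $\scrS$, so $\lim_{s\to\infty}\norm{\bar{X}^{t_{0}}_{s}(\omega)-z_{i}}$ exists for $i=1,2$. Expanding
$$
\norm{\bar{X}^{t_{0}}_{s}(\omega)-z_{1}}^{2}-\norm{\bar{X}^{t_{0}}_{s}(\omega)-z_{2}}^{2}=2\inner{z_{2}-z_{1},\bar{X}^{t_{0}}_{s}(\omega)}+\norm{z_{1}}^{2}-\norm{z_{2}}^{2},
$$
the left-hand side converges, hence so does $s\mapsto\inner{z_{2}-z_{1},\bar{X}^{t_{0}}_{s}(\omega)}$; evaluating this limit along a sequence $s_{k}\to\infty$ with $\bar{X}^{t_{0}}_{s_{k}}(\omega)\to z_{1}$ and along another sequence realizing $z_{2}$ forces $\inner{z_{2}-z_{1},z_{1}}=\inner{z_{2}-z_{1},z_{2}}$, i.e. $\norm{z_{1}-z_{2}}^{2}=0$. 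A bounded curve with a single accumulation point converges, so $X_{\infty}(\omega)\eqdef\lim_{s\to\infty}\bar{X}^{t_{0}}_{s}(\omega)$ is well defined and, $\scrS$ being closed, belongs to $\scrS$. Equivalently, one may invoke the Opial-type Lemma above, or apply Proposition \ref{prop:CP} to the integer skeleton $(\bar{X}^{t_{0}}_{n})_{n\in\N}$ and then pass to continuous $t$, using that $\lim_{s\to\infty}\norm{\bar{X}^{t_{0}}_{s}(\omega)-X_{\infty}(\omega)}$ exists and vanishes along $n\to\infty$.

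It remains to check measurability. For each fixed $s\geq t_{0}$ the map $\omega\mapsto\bar{X}^{t_{0}}_{s}(\omega)$ is $\scrF$-measurable, because $(r,\omega)\mapsto X(r,\omega)$ is jointly measurable (having continuous sample paths and being adapted, it is progressively measurable), so its time-average over $[t_{0},s]$ is again measurable in $\omega$. Therefore $X_{\infty}=\lim_{n\to\infty}\bar{X}^{t_{0}}_{n}$ on $\Omega_{0}$ is, off a $\Pr$-null set, a pointwise limit of $\R^{d}$-valued measurable maps; extending it arbitrarily (say by $0$) on $\Omega\setminus\Omega_{0}$ yields $X_{\infty}\in L^{0}(\Omega;\R^{d})$ with values in $\scrS$, and by construction $\bar{X}^{t_{0}}_{t}\to X_{\infty}$ $\Pr$-almost surely as $t\to\infty$.

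The argument is essentially routine; the only points I expect to require a little care are making sure the ``unique accumulation point plus boundedness implies convergence'' step is applied to the genuinely continuous curve $s\mapsto\bar X^{t_{0}}_{s}(\omega)$ rather than merely to a sequence — which is why I would either argue directly with the continuous parameter or pass through the integer skeleton and re-use the existence of $\lim_{s}\norm{\bar X^{t_{0}}_{s}(\omega)-X_{\infty}(\omega)}$ — and the joint measurability of $(r,\omega)\mapsto X(r,\omega)$ needed to conclude that $\omega\mapsto\bar X^{t_{0}}_{s}(\omega)$, and hence the pointwise limit $X_{\infty}$, is measurable.
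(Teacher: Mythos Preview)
Your proposal is correct and follows the same pathwise Opial route that the paper indicates in one line (``Applying this result to the realization of a continuous stochastic process\ldots''); you simply spell out the polarization-identity uniqueness argument and the measurability step that the paper leaves implicit. One harmless slip: you invoke ``$\scrS$ being closed'' to place $X_{\infty}(\omega)$ in $\scrS$, but the corollary does not assume closedness---this is unnecessary anyway, since the unique accumulation point already lies in $\scrS$ by the second hypothesis.
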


\paragraph{Proof of Lemma \ref{lem:gap}}

If $x\in B_{\delta}$, then $x\in C\cap\dom(\opA)$ and there exists $x^{*}\in\opA(x)$. We can therefore choose $(y,y^{*})=(x,x^{*})$ in the definition of $\gap_{\delta}(x)$, to conclude $\gap_{\delta}(x)\geq 0$. Moreover, the relation $\Theta_{\delta}(x)\leq\Theta(x)$ gives $\Theta_{\delta}(x)=0$ whenever $x\in B_{\delta}\cap\scrS.$
    
    Assume now that $\Theta_{\delta}(x)=0$ for $x\in C$ with $\norm{x-a}<\delta$. Define 
\begin{equation}
g(x)=\sup_{(y,v)\in\gr(\opA)}\inner{v,x-y}.
\end{equation}
We first show that for $x\in C$ we have $g(x)=\Theta(x)$. Indeed, by definition of the gap function, we have 
\begin{align*}
\Theta(x)&=\sup_{y\in\dom(\opA)\cap C}\sup_{y^{*}\in\opA(y)+\NC_{C}(y)}\inner{y^{*},x-y}\\
&=\sup_{y\in\dom(\opA)\cap C}\sup\{\inner{v+\xi,x-y}\vert v\in\opA(y),\xi\in\NC_{C}(y)\}\\
&=\sup_{y\in\dom(\opA)}\{\sup_{v\in\opA(y)}\inner{v,x-y}+\sup_{\xi\in\NC_{C}(y)}\inner{\xi,x-y}\}
\end{align*}
If $x\in C$ and $y\in\dom(\opA)\cap C$ fixed, then $\inner{\xi,x-y}\leq 0$ for all $\xi\in\NC_{C}(y)$. Hence, 
$$
\Theta(x)\leq\sup_{y\in\dom(\opA)\cap C}\inner{v,x-y}=g(x).
$$
Conversely, since $0\in\NC_{C}(y)$, we have 
$$
\inner{v,x-y}\leq\sup_{\xi\in\NC_{C}(y)}\inner{v+\xi,x-y}.
$$
It follows $g(x)\leq\Theta(x)$, showing equality of the two functions for every $x\in C$.

Since $B_{\delta}\subseteq C$, we conclude from the above that 
$$
g_{\delta}(x)=\sup_{y\in B_{\delta}}\sup_{v\in\opA(y)}\inner{v,x-y}=\Theta_{\delta}(x)
$$
for $x\in C$. 

Assume $\gap_{\delta}(\bar{x})=0$ for some $\bar{x}\in C$ with $\norm{\bar{x}-a}<\delta$. Then, $g_{\delta}(\bar{x})=0$, and consequently 
$$
\sup_{v\in\opA(y)}\inner{v,\bar{x}-y}\leq 0\qquad\forall y\in B_{\delta}.
$$
We first show that $\bar{x}$ is a solution of the generalized variational inequality with set-valued operator $\opA$ and constraint $B_{\delta}$. For the sake of obtaining a contradiction, let's assume that $\bar{x}$ is not a solution. Then, there exists $z\in B_{\delta}$ such that 
$$
\sup_{u\in\opA(\bar{x})}\inner{u,z-\bar{x}}<0.
$$
For $t\in[0,1]$ define 
\begin{align*}
&x(t)=\bar{x}+t(z-\bar{x}),\text{ and }\nu(t)=\sup_{u\in\opA(x(t))}\inner{u,z-x(t)}.
\end{align*}
Since $\nu(0)<0$, and $\opA$ is USC, there exists $r>0$ sufficiently small, for which $\nu(r)<0$. Hence, 
\begin{align*}
0&<\inf_{u\in\opA(x(r))}\inner{u,\bar{x}-x(r)}\leq\sup_{u\in\opA(x(r))}\inner{u,\bar{x}-x(r)}\leq g_{\delta}(\bar{x})=0.
\end{align*}
A contradiction.\\ 
Next, we show that $\bar{x}$ is a solution to the generalized variational inequality with set-valued operator $\opA:\R^{d}\to 2^{\R^{d}}$ and constraint $C$. Assume not. Then, there exists $z\in C$ with 
$$
\sup_{u\in\opA(\bar{x})}\inner{u,z-\bar{x}}<0.
$$
From the above established fact, we conclude that $z\in C\setminus B_{\delta}$. Since $\norm{\bar{x}-a}<\delta$, there exists $\bar{t}>0$ such that $x(t)=\bar{x}+t(z-\bar{x})\in B_{\delta}$ for all $t\in[0,\bar{t})$. Hence, 
$$
\sup_{u\in\opA(\bar{x})}\inner{u,x(t)-\bar{x}}\geq 0\qquad\forall t\in[0,\bar{t}).
$$
However,  for any $u\in\opA(\bar{x})$ fixed, we have $\inner{u,x(t)-\bar{x}}=t\inner{u,z-\bar{x}}<0.$ A contradiction.

\end{appendix}

\bibliographystyle{abbrvnat}
\bibliography{PenaltyDynamics}

@article{MENALDI:1983aa,
	author = {Menaldi, Jose Luis},
	c1 = {Full publication date: September--October, 1983},
	date-added = {2026-01-04 07:37:06 +0100},
	date-modified = {2026-01-04 07:37:17 +0100},
	db = {JSTOR},
	isbn = {00222518, 19435258},
	journal = {Indiana University Mathematics Journal},
	month = {2026/01/04/},
	number = {5},
	pages = {733--744},
	publisher = {Indiana University Mathematics Department},
	title = {Stochastic Variational Inequality for Reflected Diffusion},
	url = {http://www.jstor.org/stable/24893319},
	volume = {32},
	year = {1983},
	bdsk-url-1 = {http://www.jstor.org/stable/24893319}}

@article{attouch1984variational,
  title={Variational convergence for functions and operators},
  author={Attouch, A},
  journal={Applicable Mathematics Series},
  year={1984}
}

@article{Ciotir:2022aa,
	abstract = {We study the existence and uniqueness of a mild solution to a nonlinear Fokker-Planck equation with reflecting boundary conditions, by using a monotonicity approach. Then we prove that the mild solution is also a distributional one.},
	author = {Ciotir, Ioana and Fayad, Rim},
	date = {2022/06/05/},
	date-added = {2026-01-04 07:35:39 +0100},
	date-modified = {2026-01-04 07:35:39 +0100},
	doi = {https://doi.org/10.1016/j.jde.2022.03.011},
	isbn = {0022-0396},
	journal = {Journal of Differential Equations},
	keywords = {Fokker-Planck equation; m-accretive operator; Mild solution; McKean-Vlasov stochastic differential equation},
	pages = {296--317},
	title = {Nonlinear Fokker-Planck equation with reflecting boundary conditions},
	url = {https://www.sciencedirect.com/science/article/pii/S0022039622001760},
	volume = {321},
	year = {2022},
	bdsk-url-1 = {https://www.sciencedirect.com/science/article/pii/S0022039622001760},
	bdsk-url-2 = {https://doi.org/10.1016/j.jde.2022.03.011}}

@article{barbu2020optimal,
  title={Optimal feedback controllers for a stochastic differential equation with reflection},
  author={Barbu, Viorel},
  journal={SIAM Journal on Control and Optimization},
  volume={58},
  number={2},
  pages={986--997},
  year={2020},
  publisher={SIAM}
}

@article{Maulen-Soto:2025aa,
	author = {Rodrigo Maulen-Soto and Jalal Fadili and Hedy Attouch},
	date-added = {2025-12-30 16:05:44 +0100},
	date-modified = {2025-12-30 16:06:00 +0100},
	doi = {10.5802/ojmo.44},
	journal = {Open Journal of Mathematical Optimization},
	pages = {1--30},
	publisher = {Universit{\'e}de Montpellier},
	title = {Stochastic Differential Inclusions and Tikhonov Regularization for Stochastic Non-Smooth Convex Optimization in Hilbert spaces},
	url = {https://ojmo.centre-mersenne.org/articles/10.5802/ojmo.44/},
	volume = {6},
	year = {2025},
	bdsk-url-1 = {https://ojmo.centre-mersenne.org/articles/10.5802/ojmo.44/},
	bdsk-url-2 = {https://doi.org/10.5802/ojmo.44}}

@article{barbu2005neumann,
	author = {Barbu, Viorel and Prato, Giuseppe Da},
	date-modified = {2025-12-31 12:48:24 +0100},
	journal = {Communications in Partial Differential Equations},
	number = {8},
	pages = {1217--1248},
	publisher = {Taylor \& Francis},
	title = {The Neumann problem on unbounded domains of $\mathbb{R}^{d}$ and stochastic variational inequalities},
	volume = {30},
	year = {2005}}

@article{Dempe:2021aa,
	abstract = {In this paper we discuss the simple bilevel programming problem (SBP) and its extension, the simple mathematical programming problem under equilibrium constraints (SMPEC). Here we first define both these problems and study their interrelations. Next we study the various types of necessary and sufficient optimality conditions for the (SMPEC) problems, which occur under various reformulations. The optimality conditions for (SBP) are special cases of the results obtained for (SMPEC) when the lower level objective is the gradient of a convex function. Among the various optimality conditions presented in this article are the sequential optimality conditions, which do not need any constraint qualification. We also present a schematic algorithm for (SMPEC), where the sequential optimality conditions play a key role in the convergence analysis.},
	author = {Dempe, Stephan and Dinh, Nguyen and Dutta, Joydeep and Pandit, Tanushree},
	date = {2021/07/01},
	date-added = {2024-01-24 12:47:57 +0100},
	date-modified = {2024-01-24 12:47:57 +0100},
	doi = {10.1007/s10107-020-01509-x},
	id = {Dempe2021},
	isbn = {1436-4646},
	journal = {Mathematical Programming},
	number = {1},
	pages = {227--253},
	title = {Simple bilevel programming and extensions},
	url = {https://doi.org/10.1007/s10107-020-01509-x},
	volume = {188},
	year = {2021},
	bdsk-url-1 = {https://doi.org/10.1007/s10107-020-01509-x}}

@article{burachik1998generalized,
	author = {Burachik, Regina S and Iusem, Alfredo N},
	journal = {SIAM Journal on Optimization},
	number = {1},
	pages = {197--216},
	publisher = {SIAM},
	title = {A generalized proximal point algorithm for the variational inequality problem in a Hilbert space},
	volume = {8},
	year = {1998}}

@article{combettes2015stochastic,
	author = {Combettes, Patrick L and Pesquet, Jean-Christophe},
	journal = {SIAM Journal on Optimization},
	number = {2},
	pages = {1221--1248},
	publisher = {SIAM},
	title = {Stochastic quasi-Fej{\'e}r block-coordinate fixed point iterations with random sweeping},
	volume = {25},
	year = {2015}}

@article{cabot2005proximal,
	author = {Cabot, Alexandre},
	journal = {SIAM Journal on Optimization},
	number = {2},
	pages = {555--572},
	publisher = {SIAM},
	title = {Proximal point algorithm controlled by a slowly vanishing term: applications to hierarchical minimization},
	volume = {15},
	year = {2005}}

@article{Rascanu96,
	author = {Rascanu, Aurel},
	fjournal = {PanAmerican Mathematical Journal},
	issn = {1064-9735},
	journal = {PanAmer. Math. J.},
	mrclass = {47H40 (47H04 47H05 47H15 60H99)},
	mrnumber = {1400370},
	mrreviewer = {Stanis\l aw\ W\polhk edrychowicz},
	number = {3},
	pages = {83--119},
	title = {Deterministic and stochastic differential equations in {H}ilbert spaces involving multivalued maximal monotone operators},
	volume = {6},
	year = {1996}}

@article{combettes2025lower,
	author = {Combettes, Patrick L and Mayrand, Julien N},
	journal = {arXiv preprint arXiv:2508.15735},
	title = {Lower Bounds on the {H}araux {F}unction},
	year = {2025}}

@article{BarbuPrato08,
	author = {Viorel Barbu and Giuseppe Da Prato},
	doi = {10.1080/03605300701743764},
	journal = {Communications in Partial Differential Equations},
	number = {7},
	pages = {1318--1338},
	publisher = {Taylor \& Francis},
	title = {The Generator of the Transition Semigroup Corresponding to a Stochastic Variational Inequality},
	url = {https://doi.org/10.1080/03605300701743764},
	volume = {33},
	year = {2008},
	bdsk-url-1 = {https://doi.org/10.1080/03605300701743764}}

@article{li2019stochastic,
	author = {Li, Qianxiao and Tai, Cheng and others},
	journal = {Journal of Machine Learning Research},
	number = {40},
	pages = {1--47},
	title = {Stochastic modified equations and dynamics of stochastic gradient algorithms i: Mathematical foundations},
	volume = {20},
	year = {2019}}

@book{beck2017first,
	author = {Beck, Amir},
	publisher = {SIAM},
	title = {First-order methods in optimization},
	year = {2017}}

@inproceedings{raginsky2012continuous,
	author = {Raginsky, Maxim and Bouvrie, Jake},
	booktitle = {2012 IEEE 51st IEEE Conference on Decision and Control (CDC)},
	organization = {IEEE},
	pages = {6793--6800},
	title = {Continuous-time stochastic mirror descent on a network: Variance reduction, consensus, convergence},
	year = {2012}}

@article{an2020stochastic,
	author = {An, Jing and Lu, Jianfeng and Ying, Lexing},
	journal = {Information and Inference: A Journal of the IMA},
	number = {4},
	pages = {851--873},
	publisher = {Oxford University Press},
	title = {Stochastic modified equations for the asynchronous stochastic gradient descent},
	volume = {9},
	year = {2020}}

@article{Nes07,
	abstract = {In this paper we suggest new dual methods for solving variational inequalities with monotone operators. We show that with an appropriate step-size strategy, our method is optimal both for Lipschitz continuous operators ({\$}{\$}O({\{}1 {$\backslash$}over {$\backslash$}epsilon{\}}){\$}{\$}iterations), and for the operators with bounded variations ({\$}{\$}O({\{}1 {$\backslash$}over {$\backslash$}epsilon\^{}2{\}}){\$}{\$}iterations). Our technique can be applied for solving non-smooth convex minimization problems with known structure. In this case the worst-case complexity bound is{\$}{\$}O({\{}1 {$\backslash$}over {$\backslash$}epsilon{\}}){\$}{\$}iterations.},
	author = {Nesterov, Yurii},
	da = {2007/03/01},
	date-added = {2025-06-30 20:38:49 +0100},
	date-modified = {2025-06-30 20:38:49 +0100},
	doi = {10.1007/s10107-006-0034-z},
	id = {Nesterov2007},
	isbn = {1436-4646},
	journal = {Mathematical Programming},
	number = {2},
	pages = {319--344},
	title = {Dual extrapolation and its applications to solving variational inequalities and related problems},
	ty = {JOUR},
	url = {https://doi.org/10.1007/s10107-006-0034-z},
	volume = {109},
	year = {2007},
	bdsk-url-1 = {https://doi.org/10.1007/s10107-006-0034-z}}

@article{BorDut16,
	author = {Borwein, Jonathan M and Dutta, Joydeep},
	date-added = {2025-06-30 20:12:20 +0100},
	date-modified = {2025-06-30 20:12:20 +0100},
	journal = {Journal of Optimization Theory and Applications},
	number = {3},
	pages = {757--784},
	publisher = {Springer},
	title = {Maximal monotone inclusions and Fitzpatrick functions},
	volume = {171},
	year = {2016}}

@article{Brezis:1976aa,
	abstract = {LetA andB be monotone (multivalued) operators in a Hilbert spaceH. The paper deals with the relations between the rangeR(A+B) ofA+B and the algebraic sum of the ranges ofA andB, R(A)+R(B).},
	author = {Brezis, Ha{\"\i}m and Haraux, Alain},
	date = {1976/02/01},
	date-added = {2025-06-08 23:16:56 +0200},
	date-modified = {2025-06-08 23:16:56 +0200},
	doi = {10.1007/BF02756796},
	id = {Brezis1976},
	isbn = {1565-8511},
	journal = {Israel Journal of Mathematics},
	number = {2},
	pages = {165--186},
	title = {Image d'une somme d'operateurs monotones et applications},
	url = {https://doi.org/10.1007/BF02756796},
	volume = {23},
	year = {1976},
	bdsk-url-1 = {https://doi.org/10.1007/BF02756796}}

@article{Bot:2025aa,
	author = {Boț, Radu Ioan and Schindler, Chiara},
	date = {2025/06/01},
	date-added = {2025-06-05 21:49:42 +0200},
	date-modified = {2025-06-05 21:49:42 +0200},
	doi = {10.3934/eect.2024064},
	journal = {Evolution Equations and Control Theory},
	keywords = {Monotone equation; stochastic differential equation; existence and uniqueness of solutions; ergodic upper bounds; ergodic convergence rates; stochastic algorithms for monotone equations},
	number = {3},
	pages = {463-493 EP -},
	title = {On a stochastic differential equation with correction term governed by a monotone and Lipschitz continuous operator},
	url = {https://www.aimsciences.org/article/doi/10.3934/eect.2024064},
	volume = {14},
	year = {2025},
	bdsk-url-1 = {https://www.aimsciences.org/article/doi/10.3934/eect.2024064},
	bdsk-url-2 = {https://doi.org/10.3934/eect.2024064}}

@article{pettersson1995yosida,
	author = {Pettersson, Roger},
	journal = {Stochastics: An International Journal of Probability and Stochastic Processes},
	number = {1-2},
	pages = {107--120},
	publisher = {Taylor \& Francis},
	title = {Yosida approximations for multivalued stochastic differential equations},
	volume = {52},
	year = {1995}}

@article{attouch2018asymptotic,
	author = {Attouch, Hedy and Cabot, Alexandre and Czarnecki, Marc-Olivier},
	journal = {Transactions of the American Mathematical Society},
	number = {2},
	pages = {755--790},
	title = {Asymptotic behavior of nonautonomous monotone and subgradient evolution equations},
	volume = {370},
	year = {2018}}

@article{MerStaJOTA18,
	abstract = {We examine a class of stochastic mirror descent dynamics in the context of monotone variational inequalities (including Nash equilibrium and saddle-point problems). The dynamics under study are formulated as a stochastic differential equation, driven by a (single-valued) monotone operator and perturbed by a Brownian motion. The system's controllable parameters are two variable weight sequences, that, respectively, pre- and post-multiply the driver of the process. By carefully tuning these parameters, we obtain global convergence in the ergodic sense, and we estimate the average rate of convergence of the process. We also establish a large deviations principle, showing that individual trajectories exhibit exponential concentration around this average.},
	author = {Mertikopoulos, Panayotis and Staudigl, Mathias},
	date = {2018/12/01},
	date-added = {2025-06-05 21:43:10 +0200},
	date-modified = {2025-06-05 21:43:18 +0200},
	doi = {10.1007/s10957-018-1346-x},
	id = {Mertikopoulos2018},
	isbn = {1573-2878},
	journal = {Journal of Optimization Theory and Applications},
	number = {3},
	pages = {838--867},
	title = {Stochastic Mirror Descent Dynamics and Their Convergence in Monotone Variational Inequalities},
	url = {https://doi.org/10.1007/s10957-018-1346-x},
	volume = {179},
	year = {2018},
	bdsk-url-1 = {https://doi.org/10.1007/s10957-018-1346-x}}

@article{maulen2024tikhonov,
	author = {Maulen-Soto, Rodrigo and Fadili, Jalal and Attouch, Hedy},
	journal = {arXiv preprint arXiv:2403.06708},
	title = {Tikhonov regularization for stochastic non-smooth convex optimization in Hilbert spaces},
	year = {2024}}

@article{MerStauSIAM,
	abstract = { In view of solving convex optimization problems with noisy gradient input, we analyze the asymptotic behavior of gradient-like flows under stochastic disturbances. Specifically, we focus on the widely studied class of mirror descent schemes for convex programs with compact feasible regions, and we examine the dynamics' convergence and concentration properties in the presence of noise. In the vanishing noise limit, we show that the dynamics converge to the solution set of the underlying problem (a.s.). Otherwise, when the noise is persistent, we show that the dynamics are concentrated around interior solutions in the long run, and they converge to boundary solutions that are sufficiently ``sharp.'' Finally, we show that a suitably rectified variant of the method converges irrespective of the magnitude of the noise (or the structure of the underlying convex program), and we derive an explicit estimate for its rate of convergence. },
	author = {Mertikopoulos, Panayotis and Staudigl, Mathias},
	date-added = {2025-06-05 21:41:42 +0200},
	date-modified = {2025-06-05 21:41:49 +0200},
	doi = {10.1137/16M1105682},
	eprint = {https://doi.org/10.1137/16M1105682},
	journal = {SIAM Journal on Optimization},
	number = {1},
	pages = {163-197},
	title = {On the Convergence of Gradient-Like Flows with Noisy Gradient Input},
	url = {https://doi.org/10.1137/16M1105682},
	volume = {28},
	year = {2018},
	bdsk-url-1 = {https://doi.org/10.1137/16M1105682}}

@article{Bernardin:2003aa,
	abstract = {In this paper we show the strong mean square convergence of a numerical scheme for a Rd-multivalued stochastic differential equation: dXt+A(Xt) dt∋b(t,Xt) dt+σ(t,Xt) dWtand obtain the rate of convergence O((δlog (1/δ)1/2) when the diffusion coefficient is bounded. By introducing a discrete Skorokhod problem, we establish Lp-estimates (p≥2) for the solutions and prove the convergence by using a deterministic result. Numerical experiments for the rate of convergence are presented.},
	author = {Bernardin, Fr{\'e}d{\'e}ric},
	date = {2003/12/01},
	date-added = {2025-04-15 12:59:36 +0200},
	date-modified = {2025-04-15 12:59:36 +0200},
	doi = {10.1023/A:1025656814701},
	id = {Bernardin2003},
	isbn = {1572-932X},
	journal = {Set-Valued Analysis},
	number = {4},
	pages = {393--415},
	title = {Multivalued Stochastic Differential Equations: Convergence of a Numerical Scheme},
	url = {https://doi.org/10.1023/A:1025656814701},
	volume = {11},
	year = {2003},
	bdsk-url-1 = {https://doi.org/10.1023/A:1025656814701}}

@incollection{cepa2006equations,
	author = {C{\'e}pa, Emmanuel},
	booktitle = {S{\'e}minaire de Probabilit{\'e}s XXIX},
	pages = {86--107},
	publisher = {Springer},
	title = {Equations diff{\'e}rentielles stochastiques multivoques},
	year = {2006}}

@article{attouch2010asymptotic,
	author = {Attouch, Hedy and Czarnecki, Marc-Olivier},
	journal = {Journal of Differential Equations},
	number = {6},
	pages = {1315--1344},
	publisher = {Elsevier},
	title = {Asymptotic behavior of coupled dynamical systems with multiscale aspects},
	volume = {248},
	year = {2010}}

@book{protter2005stochastic,
	author = {Protter, Philip E},
	date-modified = {2025-06-04 08:57:19 +0200},
	publisher = {Springer},
	title = {Stochastic differential equations},
	year = {2005}}

@book{pardoux2014stochastic,
	author = {Pardoux, Etienne and Rascanu, Aurel},
	publisher = {Springer},
	title = {Stochastic differential equations},
	year = {2014}}

@article{noun2013forward,
	author = {Noun, Nahla and Peypouquet, Juan},
	journal = {Journal of Optimization Theory and Applications},
	pages = {787--795},
	publisher = {Springer},
	title = {Forward--backward penalty scheme for constrained convex minimization without inf-compactness},
	volume = {158},
	year = {2013}}

@article{rodrigo2024stochastic,
	author = {Rodrigo, Maul{\'e}n S and Fadili, Jalal and Attouch, Hedy},
	journal = {Mathematics of Operations Research},
	publisher = {INFORMS},
	title = {An Stochastic Differential Equation Perspective on Stochastic Convex Optimization},
	year = {2024}}

@article{sabach2017first,
	author = {Sabach, Shoham and Shtern, Shimrit},
	journal = {SIAM Journal on Optimization},
	number = {2},
	pages = {640--660},
	publisher = {SIAM},
	title = {A first order method for solving convex bilevel optimization problems},
	volume = {27},
	year = {2017}}

@article{Bot:2016aa,
	abstract = {We investigate the existence and uniqueness of (locally) absolutely continuous trajectories of a penalty term-based dynamical system associated to a constrained variational inequality expressed as a monotone inclusion problem. Relying on Lyapunov analysis and on the ergodic continuous version of the celebrated Opial Lemma we prove weak ergodic convergence of the orbits to a solution of the constrained variational inequality under investigation. If one of the operators involved satisfies stronger monotonicity properties, then strong convergence of the trajectories can be shown.},
	author = {Bo{\c t}, Radu Ioan and Csetnek, Ern{\"o}Robert},
	date = {2016/03/15/},
	date-added = {2023-12-01 14:52:39 +0100},
	date-modified = {2023-12-01 14:52:39 +0100},
	doi = {https://doi.org/10.1016/j.jmaa.2015.11.032},
	isbn = {0022-247X},
	journal = {Journal of Mathematical Analysis and Applications},
	keywords = {Dynamical systems; Lyapunov analysis; Monotone inclusions; Forward--backward algorithm; Penalty schemes},
	number = {2},
	pages = {1688--1700},
	title = {Approaching the solving of constrained variational inequalities via penalty term-based dynamical systems},
	url = {https://www.sciencedirect.com/science/article/pii/S0022247X15010707},
	volume = {435},
	year = {2016},
	bdsk-url-1 = {https://www.sciencedirect.com/science/article/pii/S0022247X15010707},
	bdsk-url-2 = {https://doi.org/10.1016/j.jmaa.2015.11.032}}

@article{attouch2011prox,
  title={Prox-penalization and splitting methods for constrained variational problems},
  author={Attouch, H{\'e}dy and Czarnecki, Marc-Olivier and Peypouquet, Juan},
  journal={SIAM Journal on Optimization},
  volume={21},
  number={1},
  pages={149--173},
  year={2011},
  publisher={SIAM}
}

@article{Peypouquet:2012aa,
	abstract = {In this paper, we propose and analyze an algorithm that couples the gradient method with a general exterior penalization scheme for constrained or hierarchical minimization of convex functions in Hilbert spaces. We prove that a proper but simple choice of the step sizes and penalization parameters guarantees the convergence of the algorithm to solutions for the optimization problem. We also establish robustness and stability results that account for numerical approximation errors, discuss implementation issues and provide examples in finite and infinite dimension.},
	author = {Peypouquet, Juan},
	date = {2012/04/01},
	date-added = {2023-07-11 12:07:21 +0200},
	date-modified = {2023-07-11 12:07:21 +0200},
	doi = {10.1007/s10957-011-9936-x},
	id = {Peypouquet2012},
	isbn = {1573-2878},
	journal = {Journal of Optimization Theory and Applications},
	number = {1},
	pages = {123--138},
	title = {Coupling the Gradient Method with a General Exterior Penalization Scheme for Convex Minimization},
	url = {https://doi.org/10.1007/s10957-011-9936-x},
	volume = {153},
	year = {2012},
	bdsk-url-1 = {https://doi.org/10.1007/s10957-011-9936-x}}

@article{Bot:2014aa,
	abstract = {We deal with monotone inclusion problems of the form 0 ∈Ax + Dx + NC(x) in real Hilbert spaces, where A is a maximally monotone operator, D a cocoercive operator and C the nonempty set of zeros of another cocoercive operator. We propose a forward-backward penalty algorithm for solving this problem which extends the one proposed by Attouch et al. (SIAM J. Optim. 21(4): 1251-1274, 2011). The condition which guarantees the weak ergodic convergence of the sequence of iterates generated by the proposed scheme is formulated by means of the Fitzpatrick function associated to the maximally monotone operator that describes the set C. In the second part we introduce a forward-backward-forward algorithm for monotone inclusion problems having the same structure, but this time by replacing the cocoercivity hypotheses with Lipschitz continuity conditions. The latter penalty type algorithm opens the gate to handle monotone inclusion problems with more complicated structures, for instance, involving compositions of maximally monotone operators with linear continuous ones.},
	author = {Bo{\c t}, Radu Ioan and Csetnek, Ern{\"o}Robert},
	date = {2014/06/01},
	date-added = {2023-05-03 21:46:58 +0200},
	date-modified = {2023-05-03 21:46:58 +0200},
	doi = {10.1007/s11228-014-0274-7},
	id = {Bo{\c t}2014},
	isbn = {1877-0541},
	journal = {Set-Valued and Variational Analysis},
	number = {2},
	pages = {313--331},
	title = {Forward-Backward and Tseng's Type Penalty Schemes for Monotone Inclusion Problems},
	url = {https://doi.org/10.1007/s11228-014-0274-7},
	volume = {22},
	year = {2014},
	bdsk-url-1 = {https://doi.org/10.1007/s11228-014-0274-7}}

@article{czarnecki2016splitting,
	author = {Czarnecki, Marc Olivier and Noun, Nahla and Peypouquet, Juan},
	journal = {Journal of Convex Analysis},
	number = {2},
	pages = {531--565},
	publisher = {HELDERMANN VERLAG},
	title = {Splitting forward-backward penalty scheme for constrained variational problems},
	volume = {23},
	year = {2016}}

@article{AttCzarPey11,
	abstract = { We are concerned with the study of a class of forward-backward penalty schemes for solving variational inequalities \$0\in Ax + N\_C (x)\$ where \$\mathcal{H}\$ is a real Hilbert space, \$A: \mathcal{H}\rightrightarrows \mathcal{H}\$ is a maximal monotone operator, and \$N\_C\$ is the outward normal cone to a closed convex set \$C\subset\mathcal{H}\$. Let \$\Psi: \mathcal{H} \to \mathbb R\$ be a convex differentiable function whose gradient is Lipschitz continuous and which acts as a penalization function with respect to the constraint \$x\in C.\$ Given a sequence \$(\beta\_n)\$ of penalization parameters which tends to infinity, and a sequence of positive time steps \$(\lambda\_n) \in\ell^2\setminus\ell^1\$, we consider the diagonal forward-backward algorithm \$x\_{n+1}=(I+\lambda\_nA)^{-1}(x\_n-\lambda\_n\beta\_n \nabla \Psi (x\_n)).\$ Assuming that \$(\beta\_n)\$ satisfies the growth condition \$\limsup\_{n\to\infty}\lambda\_n\beta\_n<2/\theta\$ (where \$\theta\$ is the Lipschitz constant of \$\nabla \Psi\$), we obtain weak ergodic convergence of the sequence \$(x\_n)\$ to an equilibrium for a general maximal monotone operator A. We also obtain weak convergence of the whole sequence \$(x\_n)\$ when A is the subdifferential of a proper lower-semicontinuous convex function. As a key ingredient of our analysis, we use the cocoerciveness of the operator \$\nabla \Psi\$. When specializing our results to coupled systems, we bring new light to Passty's theorem and obtain convergence results of new parallel splitting algorithms for variational inequalities involving coupling in the constraint. We also establish robustness and stability results that account for numerical approximation errors. An illustration of compressive sensing is given. },
	author = {Attouch, H\'{e}dy and Czarnecki, Marc-Olivier and Peypouquet, Juan},
	date-added = {2023-05-03 21:46:23 +0200},
	date-modified = {2023-05-03 21:46:33 +0200},
	doi = {10.1137/110820300},
	eprint = {https://doi.org/10.1137/110820300},
	journal = {SIAM Journal on Optimization},
	number = {4},
	pages = {1251-1274},
	title = {Coupling Forward-Backward with Penalty Schemes and Parallel Splitting for Constrained Variational Inequalities},
	url = {https://doi.org/10.1137/110820300},
	volume = {21},
	year = {2011},
	bdsk-url-1 = {https://doi.org/10.1137/110820300}}
\end{document}